\documentclass[reqno]{amsart}
\usepackage{amsmath, amsthm, amssymb, amstext}
\usepackage[left=2.5cm,right=2.5cm,top=2.5cm,bottom=2.5cm]{geometry}
\usepackage{hyperref,xcolor}
\hypersetup{pdfborder={0 0 0},colorlinks}
\usepackage{enumitem}
\usepackage{cleveref}
\allowdisplaybreaks
\newtheorem{theorem}{Theorem}
\newtheorem{remark}[theorem]{Remark}
\newtheorem{lemma}[theorem]{Lemma}
\newtheorem{proposition}[theorem]{Proposition}
\newtheorem{corollary}[theorem]{Corollary}
\newtheorem{definition}[theorem]{Definition}

\numberwithin{theorem}{section}
\numberwithin{equation}{section}

\theoremstyle{plain}
\theoremstyle{definition}
\usepackage{mathtools}

\title[On Polyharmonic Double Phase Problems]{On doubly critical polyharmonic double phase problems: Existence and non-existence of solutions}
\author[A. Dixit]{Ashutosh Dixit}
\address[A. Dixit]{Department of Mathematics, Indian Institute of Technology Jodhpur, Rajasthan 342030, India}
\email{p23ma0014@iitj.ac.in}

\author[T. Mukherjee]{Tuhina Mukherjee$^*$}
\address[T. Mukherjee]{Department of Mathematics, Indian Institute of Technology Jodhpur, Rajasthan 342030, India}
\email{tuhina@iitj.ac.in}
\thanks{$^*$corresponding author}

\subjclass{35A01,35B33,35G20,35J30,35J91}
\keywords{Polyharmonic double phase operator; Musielak–Orlicz Sobolev spaces; Critical growth problem; Variational methods; Mountain pass theorem.}

\begin{document}
\begin{abstract}
In this article, we investigate the existence and nonexistence of weak solutions to higher-order doubly critical elliptic problems with weights, driven by a polyharmonic double phase operator. More precisely, we deal with the following problem
\begin{equation*}
\begin{cases}
\mathcal{L}^m_{p,q}(u) = f(x,u) ~&\text{in } \Omega,\\[6pt]
    u=\nabla u=\cdots\nabla^{m-1} u=0
    &\text{on  }{\partial\Omega},
\end{cases}
\end{equation*}
where $\Omega \subset \mathbb{R}^N$ with $N \geq 2$ is a smooth bounded domain with Lipschitz boundary $\partial\Omega$, $m \in \mathbb{N}$, $1 < p < q < \frac{N}{m}$ with $(N-1)q\leq Np$, the nonlinear term $f\colon\Omega\times\mathbb{R}\to \mathbb{R}$ is a Carath\'{e}odory function, which has doubly critical growth, and $\mathcal{L}^m_{p,q}$ represents a polyharmonic double phase operator. 
By establishing new compactness results within a suitable Musielak–Orlicz–Sobolev framework and applying variational methods, we prove the existence of nontrivial weak solutions. In addition, we derive nonexistence results under appropriate assumptions by establishing a Pohozaev-type identity for higher-order derivatives. Our approach extends classical techniques to capture the intricate features of the double-phase operator for higher-order derivatives, and addresses the difficulties arising from critical nonlinearities, in particular extending the results of [F. Colasuonno, K. Perera, J. Differ. Equ., 422 (2025), 426–488] in a polyharmonic double phase setup overcoming the non-closedness of truncations in higher-order Sobolev spaces.
\end{abstract}

\maketitle 

\section{Introduction}
Nonlinear partial differential equations play a central role in contemporary mathematical analysis, arising naturally in the modeling of diverse phenomena in physics, engineering, and materials science. In particular, higher-order elliptic equations appear in several important contexts, including the theory of thin elastic plates, stationary surface diffusion flows, the Paneitz--Branson equation, and the Willmore equation, also known in biophysics as the Helfrich model; see \cite{Gazzola-Grunau-Sweers-2010}. A classical example is the biharmonic equation describing the bending of thin elastic plates within the Kirchhoff--Love theory \cite{Love-1944}. Moreover, higher-order terms arise naturally in strain-gradient elasticity and continua with microstructure, where they account for internal stresses and microscopic deformations \cite{Antman-2005,Mindlin-1964}. Additional applications occur in nonlinear optics \cite{Fibich-Ilan-Papanicolaou-2002} and in phase separation models such as the Cahn--Hilliard equation \cite{Cahn-Hilliard-1958}.

From the analytical perspective, the passage from second-order to higher-order elliptic equations introduces substantial difficulties. Many tools available for the Laplacian $\Delta$ do not extend to the biharmonic operator $\Delta^2$ or its higher-order counterparts. In particular, higher-order operators lack a maximum principle and do not satisfy P\'olya--Szeg\H{o}-type inequalities. Regularity theory is also more delicate, since even $(\Delta u)^2\in L^1(\mathbb{R}^N)$ does not guarantee $\Delta|u|\in L^1_{\mathrm{loc}}(\mathbb{R}^N)$. Furthermore, the Green's function associated with higher-order operators may change sign even in simple domains, complicating positivity arguments and representation formulas. These features necessitate analytical tools specifically adapted to higher-order problems.

A milestone in the study of nonlinear higher-order elliptic equations can be traced back to the work of Kratochv\'{\i}l and Ne\v{c}as \cite{Kratohvil-Necas-1971}, followed by Dr\'abek and \^Otani \cite{Drabek-Otani-2001}, who investigated problems involving the fourth-order nonlinear operator
\[
\Delta_p^2 u=\Delta\big(|\Delta u|^{p-2}\Delta u\big),
\]
known as the $p$-biharmonic operator. For $p=2$, this operator reduces to the classical biharmonic operator $\Delta^2$, which also appears in fluid mechanics as a viscosity-related term. More generally, for $m\ge2$, the polyharmonic operator is defined by
\[
\Delta^m u=\underbrace{\Delta(\Delta\cdots\Delta}_{m\ \text{times}}u),
\]
providing a natural framework for modeling complex mechanical and structural phenomena \cite{Gazzola-Grunau-Sweers-2010}. A substantial literature is available on elliptic problems driven by polyharmonic operators with critical nonlinearities, as well as on elliptic equations exhibiting nonstandard growth. For general results on polyharmonic equations, we refer to Bernis--Grunau \cite{Bernis-Grunau-1995} and Ge--Wei--Zhou \cite{Ge-Wei-Zhou-2011}, while the analysis of nonstandard growth problems is typically carried out within the framework of Musielak--Orlicz spaces.

In the polyharmonic setting, Gazzola \cite{Gazzola-1998} investigated elliptic problems with critical growth and emphasized the bifurcation phenomenon of critical dimensions previously observed by Pucci-Serrin \cite{Pucci-Serrin-1990}. Subsequently, Clapp-Squassina \cite{Clapp-Squassina-2003} established multiplicity results for perturbed polyharmonic problems with critical growth under symmetry assumptions, while Bartsch-Schneider-Weth \cite{Bartsch-Schneider-Weth-2004} proved the existence of sequences of nodal finite-energy solutions. Further developments include the analysis of bubbling solutions by Guo-Peng-Yan \cite{Guo-Peng-Yan-2015}, as well as recent contributions by Cannone-Cingolani-Mederski \cite{Cannone-Cingolani-Mederski-2025}.

In recent years, significant attention has been devoted to problems involving nonstandard growth conditions, particularly those driven by double phase operators. Such operators arise from variational integrals of the form
\begin{equation}
\omega \longmapsto \int_\Omega \left(|\nabla\omega|^p+a(x)|\nabla\omega|^q\right)\,dx,
\label{eq:Zhikov}
\end{equation}
introduced by Zhikov \cite{Zhikov-1986,Zhikov-1995,Zhikov-1997} to model strongly anisotropic materials. The presence of the modulating coefficient $a(x)\ge0$ allows the energy density to switch between distinct growth regimes, leading to a rich and highly nontrivial analytical structure. The regularity theory and variational analysis of double phase problems have since been extensively developed; see, for instance, \cite{Baroni-Colombo-Mingione-2015,Baroni-Kuusi-Mingione-2014,Colombo-Mingione-2015,Baroni-Colombo-Mingione-2018}.

On the other hand, double phase problems with critical nonlinearities have attracted increasing attention in recent years. Papageorgiou-Zhang \cite{Papageorgiou-Zhang-2020} studied parametric double phase equations involving critical terms and proved the existence of multiple nontrivial solutions. Liu-Papageorgiou \cite{Liu-Papageorgiou-2021} analyzed Dirichlet double phase problems with unbalanced growth. More recently, Farkas-Fiscella-Ho-Winkert \cite{Farkas-Fiscella-Ho-Winkert-2025} examined double phase equations with mixed (sandwich-type) critical growth and established existence and multiplicity results. In addition, Colasuonno-Perera \cite{Colasuonno-Perera-2025} revisited a Brezis--Nirenberg-type double phase problem in the first-order case $m=1$, proving compactness, existence, and nonexistence results.

While polyharmonic and double phase problems have each been studied extensively, their combination, namely \emph{polyharmonic double phase problems}, has not been systematically investigated. This gap in the literature provides the primary motivation for the present work. We extend the variational framework and compactness theory developed in \cite{Colasuonno-Perera-2025} to a genuinely higher-order setting, where new analytical difficulties arise from the interaction of polyharmonic operators, nonstandard growth, and critical nonlinearities. Although our work is inspired by the variational strategy developed in \cite{Colasuonno-Perera-2025}, the extension to the polyharmonic setting is far from formal. Several fundamental tools available in the first-order case break down for $m\ge2$. In particular, truncation and positivity arguments are no longer available, higher-order derivatives do not satisfy chain rules compatible with the Musielak--Orlicz structure, and weak convergence does not yield convergence of the associated nonlinear terms. As a result, new compactness arguments are required, especially in the proof
of Lemma~\ref{lemmaweak-convergence}, where the almost everywhere convergence of higher-order gradients
is obtained by handling the interaction between higher-order derivatives and double
phase growth. These aspects do not arise in the first-order setting and constitute a substantial part of the novelty of the present work.

We consider the following boundary value problem:
\begin{equation}\label{3.1.1}\tag{$\mathcal{P}_f$}
\begin{cases}
\mathcal{L}^m_{p,q}(u) = f(x,u) & \text{in } \Omega, \\[6pt]
u = \nabla u = \cdots = \nabla^{m-1} u = 0 & \text{on } \partial\Omega,
\end{cases}
\end{equation}
where $\mathcal{L}^m_{p,q}$ denotes a polyharmonic double phase operator defined by
\begin{equation}
\mathcal{L}^m_{p,q}(u)=
\begin{cases}
-\nabla\cdot\Big(\Delta^{\frac{m-1}{2}}\big[|\nabla\Delta^{\frac{m-1}{2}}u|^{p-2}\nabla\Delta^{\frac{m-1}{2}}u
+a(x)|\nabla\Delta^{\frac{m-1}{2}}u|^{q-2}\nabla\Delta^{\frac{m-1}{2}}u\big]\Big),
& \text{if } m \text{ is odd}, \\[2ex]
\Delta^{\frac{m}{2}}\Big(|\Delta^{\frac{m}{2}}u|^{p-2}\Delta^{\frac{m}{2}}u
+a(x)|\Delta^{\frac{m}{2}}u|^{q-2}\Delta^{\frac{m}{2}}u\Big),
& \text{if } m \text{ is even}.
\end{cases}
\end{equation}
Here $m\in\mathbb{N}$, $\Omega\subset\mathbb{R}^N$ $(N\ge2)$ is a bounded domain, and
$a(\cdot)\in C^{0,\alpha}(\mathbb{R}^N)$ for some $\alpha\in(0,1]$, with $a(x)\ge0$.
The nonlinearity $f:\Omega\times\mathbb{R}\to\mathbb{R}$ exhibits doubly critical growth and is given by
\[
f(x,u)=\mu|u|^{p^*-2}u+b(x)|u|^{q^*-2}u+c(x)|u|^{s-2}u,
\]
where $1<p<q<\frac{N}{m}$ with $(N-1)q\le Np$, $p^*=\frac{Np}{N-mp}$,
$q^*=\frac{Nq}{N-mq}$, $1<s<q^*$, and $\mu\ge0$.
The functions $b\in C(\overline{\Omega})$ and $c\in L^\infty(\Omega)$ are nonnegative and satisfy
\begin{equation}\label{3.1.2}\tag{A0}
a_0=\inf_{x\in\operatorname{supp}(b)} a(x)>0,
\end{equation}
and
\begin{equation}\label{3.1.3}\tag{A1}
c(x)\le C\,a(x)^{\frac{s}{q}}\quad \text{for all } x\in\Omega,
\end{equation}
for some constant $C>0$ whenever $s\ge p^*$.

For $j\in\mathbb{N}$, the notation $\nabla^j u$ denotes the $j$-th order gradient of $u$, defined by
\[
\nabla^j u=
\begin{cases}
\nabla\Delta^{\frac{j-1}{2}}u, & \text{if } j \text{ is odd},\\[4pt]
\Delta^{\frac{j}{2}}u, & \text{if } j \text{ is even},
\end{cases}
\]
where $\nabla$ and $\Delta$ denote the standard gradient and Laplacian operators, respectively.

Assumption \eqref{3.1.2} plays a crucial role in our analysis, as it ensures that the coefficient $a(x)$ is strictly positive on the support of $b(x)$. This allows us to localize the critical term involving $|u|^{q^*-2}u$ in regions where the $q$-growth dominates, thereby enabling the use of classical Sobolev embeddings and sharp modular inequalities. In particular, condition \eqref{3.1.2} is essential for deriving the key estimate \eqref{eq3.3.8}, which guarantees that the weak limit of a $(PS)_\beta$ sequence associated with \eqref{3.1.1} is nontrivial (see Proposition~\ref{propcompactness}).
On the other hand, the term $|u|^{s-2}u$ acts as a subcritical perturbation of the critical components of the nonlinearity. Assumption \eqref{3.1.3} ensures that this perturbation remains genuinely subcritical, even in the borderline case $s\ge p^*$, and is instrumental in the compactness analysis (cf.\ Proposition~\ref{prop3.3.12}).
By combining variational methods with the mountain pass theorem, we establish new existence and nonexistence results for weak solutions of problem \eqref{3.1.1}. Our approach extends classical variational frameworks to accommodate the polyharmonic double phase structure and overcomes the compactness difficulties induced by critical nonlinearities.

We now summarize the main features and novelties of the present work.
\begin{itemize}
\item[(a)] The leading operator in \eqref{3.1.1} is a \emph{polyharmonic double phase operator}, whose ellipticity may switch between different regimes due to the presence of a modulating coefficient $a(\cdot)\in C^{0,\alpha}(\Omega)$.

\item[(b)] The presence of doubly critical nonlinearities in \eqref{3.1.1} leads to serious compactness difficulties in the underlying Musielak--Orlicz--Sobolev space (see Definition~\ref{definitionsobolev}), making it highly nontrivial to prove that weak limits of minimizing sequences are solutions of the problem.

\item[(c)] Since the space $W_0^{m,\mathcal{H}}(\Omega)$ is not a Hilbert space, weak convergence of a bounded Palais--Smale sequence $(u_j)_{j\in\mathbb{N}}$,
\[
u_j \rightharpoonup u \quad \text{in } W_0^{m,\mathcal{H}}(\Omega),
\]
does not imply the weak convergence of the corresponding nonlinear terms, namely
\[
|\nabla^m u_j|^{p-2}\nabla^m u_j \rightharpoonup |\nabla^m u|^{p-2}\nabla^m u 
\quad \text{in } L^{\frac{p}{p-1}}(\Omega).
\]
Therefore, a refined analysis is required to establish the almost everywhere convergence $\nabla^m u_j\to \nabla^m u$ in $\Omega$.

\item[(d)] In general, for $u\in W_0^{m,\mathcal{H}}(\Omega)$, neither $|u|$ nor the truncations $u^{\pm}=\max\{\pm u,0\}$ necessarily belong to $W_0^{m,\mathcal{H}}(\Omega)$. As a consequence, standard arguments for proving the existence of positive solutions cannot be applied in this framework.

\item[(e)] A crucial ingredient of our analysis is a higher-order Poho\v{z}aev-type identity, stated in Theorem~\ref{thm3.3.1}, which plays a fundamental role in deriving nonexistence results.

\item[(f)] The proofs rely on refined variational and topological methods specifically adapted to the higher-order double phase setting.
\end{itemize}

The paper is organized as follows. In Section~2, we state the main results and introduce the precise functional framework. Section~3 collects preliminary material on Musielak--Orlicz spaces and develops the variational setting for double phase problems. In Section~4, we establish compactness results and derive sharp estimates for the associated energy thresholds. Section~5 is devoted to the proof of the main existence results. Finally, Section~6 contains the proof of the higher-order Poho\v{z}aev identity and the corresponding nonexistence result.
\section{Main Results} \label{secresults}
This article establishes existence results for double-phase problems through several variational and topological tools. We investigate the following type of doubly critical growth equation driven by a polyharmonic double phase operator 
\begin{equation}\label{3.2.8}\tag{$\mathcal{P}_g$}
\begin{cases}
     \mathcal{L}^m_{p,q}(u)  = \mu|u|^{p^{*}-2}u + b(x)|u|^{q^{*}-2}u + g(x,u) & \text{in } \Omega, \\
   u=\nabla u=\cdots\nabla^{m-1} u=0
    &\text{on  }{\partial\Omega},
\end{cases}
\end{equation}
where \(g :\Omega \times \mathbb{R} \to \mathbb{R}\) is a Carath\'{e}odory function satisfying the subcritical growth as follows
\begin{equation}\label{3.2.9}
    |g(x, t)| \leq c_{1} + c_{2}|t|^{r-1} + c(x)|t|^{s-1} \quad \text{for a.a. } x \in \Omega~~\text{and for all } t \in \mathbb{R},
\end{equation}
where \(c_{1}, c_{2} > 0\) are constants,  \(1 < r < p^{*} \leq s < q^{*}\), and coefficients \(b (x)\in C(\overline{\Omega})\), and \(c(x) \in L^{\infty}(\Omega)\) satisfy \eqref{3.1.2} and \eqref{3.1.3}, respectively. We refer to Section \ref{prelim} for necessary definitions and preliminaries.
\begin{definition}
    A function $u \in W_{0}^{m,\mathcal{H}}(\Omega)$ is a weak solution of \eqref{3.2.8}, if there holds
\begin{equation}\label{weaksolution}
\int_{\Omega} \left(|\nabla^m u|^{p-2} + a(x)|\nabla^m u|^{q-2}\right)\nabla^m u \cdot \nabla^m v \, \mathrm{d}x = \int_{\Omega} \left(\mu|u|^{p^*-2}u + b(x)|u|^{q^*-2}u+g(x,u)\right)v \, \mathrm{d}x
\end{equation}
for all $v \in W_{0}^{m,\mathcal{H}}(\Omega)$, where the space $ W_{0}^{m,\mathcal{H}}(\Omega)$ is given in Definition  \ref{definitionsobolev}.
\end{definition}

Define the functional \(E :W_{0}^{m, \mathcal{H}}(\Omega) \to \mathbb{R}\)  associated with \eqref{3.2.8}  by
\begin{equation}\label{eq3.2.10}
E(u) = \int_{\Omega} \left( \frac{1}{p}|\nabla^m u|^{p} + \frac{a(x)}{q}|\nabla^m u|^{q} - \frac{\mu}{p^{*}}|u|^{p^{*}} - \frac{b(x)}{q^{*}}|u|^{q^{*}} - G(x,u) \right) \mathrm{d}x,\forall~u\in  W_{0}^{m,\mathcal{H}}(\Omega),
\end{equation}
where \(G(x, t) := \int_{0}^{t} g(x, \tau) d\tau\). It is easy to see that $E$ is well-defined, of class $C^1$ on $ W_{0}^{m,\mathcal{H}}(\Omega)$ and its Fr\'echet and G\^ateaux differentials exist and equal such that
$$\langle{E^\prime(u),v}\rangle=\int_{\Omega}\Big( \left(|\nabla^m u|^{p-2} + a(x)|\nabla^m u|^{q-2}\right)\nabla^m u \cdot \nabla^m v -\left(\mu|u|^{p^*-2}u + b(x)|u|^{q^*-2}u+g(x,u)\right)v \Big)\, \mathrm{d}x $$
for all $u,v\in  W_{0}^{m,\mathcal{H}}(\Omega)$, where $\langle{\cdot,\cdot}\rangle$ denotes the duality pairing between the dual $(W_{0}^{m,\mathcal{H}}(\Omega))^\ast$ and $W_{0}^{m,\mathcal{H}}(\Omega)$. It is known that all the critical points of $E$ are weak solutions to \eqref{3.2.8}.
\begin{definition}{(Palais-Smale Compactness Condition)}
 Let $X$ be a Banach space and $I\colon X\to\mathbb{R}$ be a functional of class $C^1(X,\mathbb{R})$. We say that  $I$ satisfies the Palais-Smale compactness condition at a suitable level $c\in \mathbb{R}$, if for any sequence $(u_j)_{j\in\mathbb{N}}\subset X$ such that
 \begin{equation}\label{eq6.1}
 I (u_j)\to c\quad\text{and}\quad\sup_{\|\varphi\|_{X}=1}|\langle{I^\prime(u_j),\varphi}\rangle|\to 0\quad\text{as}\quad j\to\infty  \end{equation}
has a strongly convergent subsequence in $X$. The sequence $(u_j)_{j\in\mathbb{N}}\subset X$ satisfying \eqref{eq6.1} is known as a Palais-Smale sequence at level $c\in \mathbb{R}$, which is denoted by the symbol \textnormal{(PS)$_c$}.
\end{definition}
To ensure boundedness of (PS)\(_{\beta}\)-sequences for any $\beta\in\mathbb{R}$, we assume the condition on the nonlinearity $g$ and its primitive $G$ as follows
\begin{equation}\label{3.2.11}
G(x, t) - \frac{t}{\sigma}g(x, t) \leq \mu\left(\frac{1}{\sigma} - \frac{1}{p^{*}}\right)|t|^{p^{*}} + c_{3} \quad \text{for a.a. } x \in \Omega~\text{and for all }t \in \mathbb{R},
\end{equation}
for some suitable constant \(c_3 > 0\) and \(q < \sigma < p^{*}\), which help us to demonstrate the existence of a critical energy threshold \(\beta^{*} > 0\) such that every (PS)\(_{\beta}\) sequence contains a subsequence still denoted by itself, which converges to a nontrivial solution of \eqref{3.2.8} for all $0<\beta<\beta^\ast$. This threshold can be characterized variationally through the auxiliary functional
\begin{equation}\label{eq3.2.12}
    I(X, Y, Z, W) = \frac{1}{p}X + \frac{1}{q}Y - \frac{1}{p^{*}}Z - \frac{1}{q^{*}}W \quad \text{for all } X, Y, Z, W \geq 0.
\end{equation}
\noindent
 Define the optimal Sobolev constants for higher-order derivatives by
\begin{equation}\label{eq3.2.13}
    S_{p} = \inf_{\substack{u \in \mathcal{D}^{m,p}(\mathbb{R}^N)\setminus\{0\}}} \frac{\|\nabla^m u\|_{L^p(\mathbb{R}^N)}^p}{\|u\|_{L^{p^*}(\mathbb{R}^N)}^p}\quad\text{and} \quad
S_{q} = \inf_{\substack{u \in \mathcal{D}^{m,q}(\mathbb{R}^N) \setminus\{0\}}} \frac{\|\nabla^m u\|_{L^q(\mathbb{R}^N)}^q}{\|u\|_{L^{q^*}(\mathbb{R}^N)}^q},
\end{equation}
where \(\mathcal{D}^{m,r}(\mathbb{R}^N) = \{ u \in L^{r^*}(\mathbb{R}^N) : \nabla^m u \in L^r(\mathbb{R}^N) \}\) for \(r \in \{p,q\}\).
Let \(S(\mu, \|b\|_\infty)\) be the set of nonnegative quadruples \((X, Y, Z, W) \in \mathbb{R}^4\) satisfying the following hypotheses
\begin{equation}
    I(X, Y, Z, W) > 0,~X + Y = Z + W,~Z \leq \mu S_p^{-\frac{p^*}{p}} X^{\frac{p^*}{p}},~\text{and}~ W \leq \|b\|_\infty (a_0 S_q)^{-\frac{q^*}{q}} Y^{\frac{q^*}{q}}. 
    \label{eq3.2.14}
  \end{equation}
We again define a critical energy threshold by
\[
\beta^{*}(\mu, \|b\|_\infty) = \inf_{(X,Y,Z,W) \in S(\mu,\|b\|_\infty)} I(X,Y,Z,W).
\]
Now, we can state the main results of our article as listed below. 
\begin{proposition}\label{propcompactness}
Let the hypotheses \eqref{3.1.2}, \eqref{3.1.3}, \eqref{3.2.9}, and \eqref{3.2.11} be satisfied. Further, if there holds $0 < \beta < \beta^{*}(\mu, \|b\|_\infty)$, 
then problem \eqref{3.2.8} has at least one nontrivial solution.
\end{proposition}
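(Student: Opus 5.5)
The plan is to prove the stronger statement behind the proposition: every $(PS)_\beta$ sequence $(u_j)_{j\in\mathbb{N}}$ of $E$ with $0<\beta<\beta^*(\mu,\|b\|_\infty)$ has a subsequence whose weak limit is a nontrivial weak solution of \eqref{3.2.8}.

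\textbf{Step 1: boundedness.} Fix $\sigma\in(q,p^*)$ from \eqref{3.2.11} and consider $E(u_j)-\frac1\sigma\langle E'(u_j),u_j\rangle$. Since $\sigma>q>p$, the gradient part is bounded below by
\[
\Big(\tfrac1q-\tfrac1\sigma\Big)\int_\Omega\big(|\nabla^m u_j|^p+a(x)|\nabla^m u_j|^q\big)\,\mathrm{d}x .
\]
The $\mu$-critical term contributes $\mu(\frac1\sigma-\frac1{p^*})\int_\Omega|u_j|^{p^*}$. By \eqref{3.2.11}, the contribution $-\int_\Omega\big(G(x,u_j)-\frac{u_j}{\sigma}g(x,u_j)\big)$ is at least $-\mu(\frac1\sigma-\frac1{p^*})\int_\Omega|u_j|^{p^*}-c_3|\Omega|$, so these two $\mu$-terms cancel. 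The $b$-term contributes $(\frac1\sigma-\frac1{q^*})\int_\Omega b|u_j|^{q^*}\ge0$. Hence the modular of $u_j$ is bounded by $\beta+1+o(1)\|u_j\|$. The standard modular–norm relations in $W_0^{m,\mathcal H}(\Omega)$ then give boundedness.

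\textbf{Step 2: the weak limit is a solution.} Pass to a subsequence with $u_j\rightharpoonup u$ in $W_0^{m,\mathcal H}(\Omega)$. By compactness, lower-order derivatives converge strongly, and $u_j\to u$ a.e. The key input is Lemma~\ref{lemmaweak-convergence}, which gives $\nabla^m u_j\to\nabla^m u$ a.e. in $\Omega$. Then $|\nabla^m u_j|^{p-2}\nabla^m u_j$ is bounded in $L^{p'}$ and converges a.e., so it converges weakly in $L^{p'}$ to $|\nabla^m u|^{p-2}\nabla^m u$. The same holds for the weighted $q$-term, and for $|u_j|^{p^*-2}u_j$, $b|u_j|^{q^*-2}u_j$ and $g(x,u_j)$ in the appropriate dual spaces. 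Passing to the limit in $\langle E'(u_j),v\rangle\to0$ shows that $u$ satisfies \eqref{weaksolution}.

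\textbf{Step 3: nontriviality.} Argue by contradiction and assume $u=0$.

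First, the subcritical part disappears. The terms $c_1+c_2|t|^{r-1}$ with $r<p^*$ are handled by compact embeddings. For $c(x)|t|^{s-1}$ with $s\ge p^*$, assumption \eqref{3.1.3} gives $c|u_j|^s\le Ca^{s/q}|u_j|^s$, and the compact weighted embedding of Proposition~\ref{prop3.3.12} applies. Together these give $\int_\Omega G(x,u_j)\to0$ and $\int_\Omega g(x,u_j)u_j\to0$.

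Next, introduce the four quantities
\[
X_j=\int_\Omega|\nabla^m u_j|^p,\qquad Y_j=\int_\Omega a|\nabla^m u_j|^q,\qquad Z_j=\mu\int_\Omega|u_j|^{p^*},\qquad W_j=\int_\Omega b|u_j|^{q^*}.
\]
These satisfy $X_j+Y_j-Z_j-W_j\to0$ and $I(X_j,Y_j,Z_j,W_j)\to\beta$. Extending $u_j$ by zero to $\mathbb{R}^N$ and using \eqref{eq3.2.13} gives $Z_j\le\mu S_p^{-p^*/p}X_j^{p^*/p}$.

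\textbf{The main obstacle} is the $q^*$-bound \eqref{eq3.3.8}:
\[
W_j\le\|b\|_\infty(a_0S_q)^{-q^*/q}Y_j^{q^*/q}+o(1).
\]
My approach here is as follows.
\begin{itemize}
\item Fix $\varepsilon>0$. By continuity of $a$ and \eqref{3.1.2}, choose a cutoff $\psi\in C_c^\infty(\mathbb{R}^N)$ with $0\le\psi\le1$, $\psi\equiv1$ on $\operatorname{supp}(b)$, and $a\ge a_0-\varepsilon$ on $\operatorname{supp}\psi$.
\item On $\operatorname{supp}\psi$ the $q$-phase controls $\nabla^m u_j$, so $\psi u_j\in W_0^{m,q}$. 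Applying $S_q$ gives $\int_\Omega b|u_j|^{q^*}\le\|b\|_\infty S_q^{-q^*/q}\big(\int|\nabla^m(\psi u_j)|^q\big)^{q^*/q}$.
\item Expand $\nabla^m(\psi u_j)$ by the Leibniz rule. All terms except $\psi\nabla^m u_j$ contain derivatives of $u_j$ of order at most $m-1$. These tend to $0$ in $L^q(\operatorname{supp}\psi)$ because $u_j\rightharpoonup0$ and the embedding into $W^{m-1,q}$ on $\operatorname{supp}\psi$ is compact.
\item The remaining term satisfies $\int\psi^q|\nabla^m u_j|^q\le(a_0-\varepsilon)^{-1}Y_j$.
\end{itemize}
This is exactly where the higher-order structure (no truncation, many commutator terms) enters. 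Letting $\varepsilon\to0$ after $j\to\infty$ gives the estimate with constant $a_0$.

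Finally, pass to a subsequence along which $(X_j,Y_j,Z_j,W_j)\to(X,Y,Z,W)$. The limit satisfies $X+Y=Z+W$, the two Sobolev-type bounds above, and $I(X,Y,Z,W)=\beta>0$. Hence $(X,Y,Z,W)\in S(\mu,\|b\|_\infty)$, which forces $\beta\ge\beta^*(\mu,\|b\|_\infty)$ and contradicts the hypothesis. Therefore $u\neq0$ is a nontrivial solution of \eqref{3.2.8}.
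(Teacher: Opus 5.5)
Your proposal is correct and follows the paper's overall skeleton. Boundedness comes from $E(u_j)-\frac1\sigma\langle E'(u_j),u_j\rangle$. Lemma~\ref{lemmaweak-convergence} plus weak convergence of the nonlinear terms identifies the weak limit as a solution. Assuming $u=0$, the limit quadruple satisfies $(X,Y,Z,W)\in S(\mu,\|b\|_\infty)$ with $I(X,Y,Z,W)=\beta$, which contradicts $\beta<\beta^*$.

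The one genuine difference is how you obtain the $q^*$-bound on $W$.
\begin{itemize}
\item \textbf{The paper's route.} It first proves the a priori inequality \eqref{eq3.3.8} of Proposition~\ref{prop3.3.14}, valid for every $u\in W_0^{m,\mathcal H}(\Omega)$. Its cutoff $\eta$ is supported inside $U=\{b>0\}$, where $a\ge a_0$ holds trivially. The strip $U\setminus U_\delta$ is handled by smallness of $b$ near $\partial U$; this is where $b\in C(\overline\Omega)$ is used. The Leibniz lower-order terms are absorbed by a Gagliardo--Nirenberg interpolation into $\epsilon\int_\Omega a|\nabla^m u|^q+C_\epsilon\int_U|u|^q$. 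Only at the end does it use $u_j\to0$ in $L^q(U)$.
\item \textbf{Your route.} You prove only the asymptotic bound needed along the sequence. Continuity of $a$ lets your cutoff $\psi$ equal $1$ on all of $\operatorname{supp}(b)$, so you never need continuity of $b$. The lower-order Leibniz terms are discarded by compactness, since $u_j\rightharpoonup0$, rather than interpolated.
\end{itemize}
Your version is shorter and avoids the interpolation step on the possibly irregular set $U$. The price is that it yields no reusable inequality valid for all $u$, but the proposition does not need one.

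Do make the compactness step explicit, because it is where a standing hypothesis enters.
\begin{itemize}
\item Since $\nabla^m$ here means $\Delta^{m/2}$ or $\nabla\Delta^{(m-1)/2}$, the Leibniz remainders involve full derivatives $D^\alpha u_j$ with $|\alpha|\le m-1$, not just the paper's $\nabla^k u_j$.
\item These remainders tend to $0$ in $L^q(\Omega)$ because $W_0^{m,\mathcal H}(\Omega)\hookrightarrow W_0^{m,p}(\Omega)$ by Calder\'on--Zygmund applied to the full derivatives.
\item Moreover $W_0^{m,p}(\Omega)\hookrightarrow W^{m-1,q}(\Omega)$ compactly, since $q\le\frac{Np}{N-1}<\frac{Np}{N-p}$ by $(N-1)q\le Np$.
\item The same facts justify applying $S_q$ to $\psi u_j$: mollify $\psi u_j$, apply the Sobolev inequality to the mollifications, and pass to the limit with Fatou's lemma.
\end{itemize}
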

The proofs of our main results depend upon Proposition \ref{propcompactness} combined with the asymptotic behaviour described in below.
\begin{proposition}\label{propasymptotics}
The following results hold
\begin{itemize}
    \item[\normalfont{(a)}] We have \begin{equation}\label{eq3.2.18}
    \beta^{*}(\mu, \|b\|_\infty) \geq \frac{m}{N} \frac{S_p^{N/mp}}{\mu^{\frac{N-mp}{mp}}} + o(1) \quad \text{as }\quad \|b\|_\infty \to 0^+,
    \end{equation}
    where \(\mu > 0\) is a fixed constant and the sequence $o(1)\to 0$ as $\|b\|_\infty \to 0^+$.
    \item[\normalfont{(b)}] We have
\begin{equation}\label{eq3.2.19}
    \beta^{*}(\mu, \|b\|_\infty) \geq \frac{m}{N} \frac{(a_0 S_q)^{\frac{N}{mq}}}{\|b\|_\infty^{(N-m q)/mq}} + o(1) \quad \text{as }\quad \mu \to 0^+,
     \end{equation}
     where \(\|b\|_\infty > 0\) is fixed and the sequence $o(1)\to 0$ as $\mu \to 0^+$.
\end{itemize}
\end{proposition}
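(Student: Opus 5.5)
We prove both parts by bounding $I(X,Y,Z,W)$ from below on $S(\mu,\|b\|_\infty)$ and then letting the relevant parameter tend to $0^+$. The plan is to use the constraint $X+Y=Z+W$ to remove the highest powers, and the two Sobolev-type inequalities in \eqref{eq3.2.14} to show that, as $\|b\|_\infty\to0^+$ (resp.\ $\mu\to0^+$), any admissible quadruple is forced to have $Y$ and $W$ (resp.\ $X$ and $Z$) negligible. The problem then reduces to the scalar one-phase problem.

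\textbf{Part (a).} Fix $(X,Y,Z,W)\in S(\mu,\|b\|_\infty)$. Since $X+Y=Z+W$, we can rewrite
\[
I=\Big(\frac1p-\frac1{p^*}\Big)X+\Big(\frac1q-\frac1{p^*}\Big)Y+\Big(\frac1{p^*}-\frac1{q^*}\Big)W,
\]
and $p<q<p^*<q^*$ (note $q<p^*$ follows from $(N-1)q\le Np$), so every coefficient is positive; indeed $\frac1p-\frac1{p^*}=\frac mN$.
\begin{enumerate}[label=(\roman*)]
\item We first split according to the size of $Y$. If $Y\ge \delta$ for a fixed $\delta>0$, then $I\ge(\frac1q-\frac1{p^*})\delta$.
\item Otherwise $Y<\delta$. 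Then $W\le \|b\|_\infty(a_0S_q)^{-q^*/q}Y^{q^*/q}\le \|b\|_\infty C_\delta$, so $W\to0$ uniformly as $\|b\|_\infty\to0$. Since $Y\ge 0$, the constraint gives $X\le X+Y=Z+W\le \mu S_p^{-p^*/p}X^{p^*/p}+W$.
\item Consider $\phi(X)=X-\mu S_p^{-p^*/p}X^{p^*/p}$. It is nonnegative on $[0,X_0]$ with $X_0=(S_p^{p^*/p}/\mu)^{p/(p^*-p)}=S_p^{N/mp}\mu^{-(N-mp)/mp}$, and its maximum there is of order $X_0$. Since $\phi(X)\le W=o(1)$, either $X\ge X_0-o(1)$ or $X=o(1)$.
\item In the first alternative, $I\ge\frac mN X\ge \frac mN X_0-o(1)$, which is exactly the claimed bound.
\item In the second alternative, $X,Z,W$ are all $o(1)$, so $Y=Z+W-X=o(1)$ and therefore $I=o(1)$.
\end{enumerate}

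The main obstacle is excluding the second alternative of (iii), since (v) only says $I$ is small there. This is where $I>0$ is used. Suppose $X,Y,Z,W$ are all small. If $Y$ is small, then $W\le C\|b\|_\infty Y^{q^*/q}\ll Y$ because $q^*/q>1$, and if $X$ is small, then $Z\le CX^{p^*/p}\ll X$. Hence $Z+W<X+Y$ unless $X=Y=0$. So the constraint $X+Y=Z+W$ forces $X=Y=0$, whence $Z=W=0$ and $I=0$, contradicting $I>0$.

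To make this quantitative, I would argue as follows. On $X\le\epsilon$, $Y\le\epsilon$ with $\epsilon$ small, the strict inequality $Z+W<X+Y$ holds whenever $(X,Y)\neq 0$. This gives a uniform gap: every admissible point satisfies $\max(X,Y)\ge\epsilon_0$ for some $\epsilon_0>0$ independent of $\|b\|_\infty\le1$. If $Y\ge\epsilon_0$, then (i) gives a uniform positive lower bound on $I$. If instead $X\ge\epsilon_0$, we are in the first alternative of (iii).

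Finally, take $\delta$ large in (i), choosing $(\frac1q-\frac1{p^*})\delta$ larger than $\frac mN X_0$. The minimum of all the cases is then at least $\frac mN X_0+o(1)$, which proves (a).

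\textbf{Part (b)} is symmetric, with the roles of $(X,Z,p)$ and $(Y,W,q)$ swapped. Now $\mu\to0$ makes $Z$ negligible. We use the rewriting
\[
I=\Big(\frac1p-\frac1{q^*}\Big)X+\Big(\frac1q-\frac1{q^*}\Big)Y-\Big(\frac1{p^*}-\frac1{q^*}\Big)Z\ge \frac mN Y-o(1)
\]
on bounded sets of $X$, together with $\frac1q-\frac1{q^*}=\frac mN$. The threshold is $Y_0=(a_0S_q)^{N/mq}\|b\|_\infty^{-(N-mq)/mq}$, coming from $Y\le W\le\|b\|_\infty(a_0S_q)^{-q^*/q}Y^{q^*/q}$ up to $o(1)$. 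Large $X$ is handled as before by the positive coefficient of $X$.
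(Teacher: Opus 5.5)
Your argument is correct, but it is organized differently from the paper's. The paper works along a minimizing sequence and gets everything from one quantitative inequality. From $X+Y=Z+W$ and the two Sobolev-type constraints it derives
\[
(X_j+Y_j)\Bigl(1-\mu S_p^{-p^*/p}X_j^{\frac{mp}{N-mp}}-\|b\|_\infty(a_0S_q)^{-q^*/q}Y_j^{\frac{mq}{N-mq}}\Bigr)\le 0 .
\]
Since $I>0$ forces $X_j+Y_j>0$, the bracket is nonpositive. Combined with $I\ge\frac mN X_j$, this gives the explicit bound $I\ge\frac mN X_0\bigl(1-\|b\|_\infty(a_0S_q)^{-q^*/q}Y_j^{mq/(N-mq)}\bigr)^{(N-mp)/mp}$. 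The paper then keeps $Y_j$ bounded by comparing with $\beta^*(\mu,0)\ge\beta^*(\mu,\|b\|_\infty)$.

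You proceed in separate steps instead:
\begin{itemize}
\item you replace the boundedness of $Y_j$ by a case split at a large threshold $\delta$;
\item you reduce to the one-variable inequality $\phi(X)\le W=o(1)$;
\item you use concavity of $\phi$ to get the dichotomy $X\ge X_0-o(1)$ or $X=o(1)$;
\item you rule out the small branch with a separate uniform gap lemma, $\max(X,Y)\ge\epsilon_0$.
\end{itemize}
The paper's single product inequality encodes both the gap and the resolution of the dichotomy at once, and it yields an explicit rate. Your version is longer and more qualitative. In exchange, it works pointwise on $S(\mu,\|b\|_\infty)$ and needs neither a minimizing sequence nor the comparison with $\beta^*(\mu,0)$.

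Two spots should be tightened when you write it up.

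First, in the last paragraph of (a), the sentence ``if $Y\ge\epsilon_0$, then (i) gives a uniform positive lower bound on $I$'' is not what closes the argument. Case (i) requires $Y\ge\delta$ with $\delta$ large, and a merely positive bound is not the target. What actually disposes of the range $\epsilon_0\le Y<\delta$ is that the small branch of (iii) forces $Y=o(1)$, which contradicts the gap.

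Second, in (b), large $X$ must be controlled through the identity from (a), namely $I\ge\frac mN X$. The rewriting containing $-\bigl(\frac1{p^*}-\frac1{q^*}\bigr)Z$ will not do, because $Z$ can grow like $X^{p^*/p}$. With that fixed, restricting to $X<\delta$ gives $Z\le\mu S_p^{-p^*/p}\delta^{p^*/p}=o(1)$ as $\mu\to0^+$, which is exactly what your reduction needs.
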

\subsection{Existence of solutions to the modified problem}
Let us consider the following boundary value problem
\begin{equation}\label{3.2.1}\tag{$\mathcal{P}_\lambda$}
\begin{cases}
\mathcal{L}^m_{p,q}(u) =\lambda|u|^{r-2}u + \mu|u|^{p^*-2}u + b(x)|u|^{q^*-2}u & \text{in } \Omega, \\
u=\nabla u=\cdots\nabla^{m-1} u=0
    &\text{on  }{\partial\Omega},
\end{cases}
\end{equation}
where $p \leq r < p^*$ and $\lambda, \mu > 0$, with $b(x) \in C(\overline{\Omega})$ satisfying the hypothesis \eqref{3.1.2}.
Let $\lambda_1(p)$ be the first Dirichlet eigenvalue of  $m$-\text{th} order polyharmonic operator and given by
\begin{equation}\label{eq3.2.2star}
\lambda_1(p)=\inf_{u\in W^{m,p}_0(\Omega)\setminus\{0\}}\frac{\int_{\Omega}|\nabla^m u|^p\mathrm{d}x}{\int_{\Omega}|u|^p\mathrm{d}x},
\end{equation}
Next, we have the following theorem.
\begin{theorem}[\bf The Existence Result-I]\label{thm3.2.1}
Assume that hypothesis \eqref{3.1.2} holds and there exists a ball $B_\sigma(x_0)\subset\Omega$ such that
\begin{equation}\label{3.2.2}
a(x)=0, ~ \forall~ x\in B_\sigma(x_0).\tag{A2}
\end{equation}
Then there exists $b^*>0$ such that problem \eqref{3.2.1} possesses at least one nontrivial weak solution in $W_0^{m,\mathcal{H}}(\Omega)$, when $\mu>0$ and $\|b\|_{\infty} < b^*$, under the following hypotheses
\begin{enumerate}
    \item[\normalfont{(a)}]  $N \geq mp^2$, $r=p$, and $0 < \lambda < \lambda_1(p)$,
    \item[\normalfont{(b)}] $N \geq mp^2$, $p < r < p^*$, and $\lambda > 0$,
    \item[\normalfont{(c)}] $N < mp^2$ with $\frac{(Np - 2N + mp)p}{(N - mp)(p - 1)}<r<p^{*}$, and $\lambda > 0$.
\end{enumerate}
\end{theorem}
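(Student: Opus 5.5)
The plan is to apply the mountain pass theorem to the functional $E$ in \eqref{eq3.2.10} with $g(x,t)=\lambda|t|^{r-2}t$. The compactness needed below the critical level is supplied by Proposition~\ref{propcompactness} together with the asymptotics of Proposition~\ref{propasymptotics}(a).

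First, I check the hypotheses of Proposition~\ref{propcompactness}. Growth condition \eqref{3.2.9} holds with $c\equiv 0$ and $r<p^*$; we may take $s\in[p^*,q^*)$ arbitrary, and then \eqref{3.1.3} is trivial. For \eqref{3.2.11}, note that
\[
G-\tfrac{t}{\sigma}g=\lambda\Big(\tfrac1r-\tfrac1\sigma\Big)|t|^r .
\]
If $r<\sigma$, this is bounded by $\mu(\frac1\sigma-\frac1{p^*})|t|^{p^*}+c_3$ by Young's inequality, since $r<p^*$. If $r\ge\sigma$, the left side is already $\le 0$. So it suffices to choose $\sigma\in(q,p^*)$, which is possible because $q<p^*$ under $(N-1)q\le Np$.

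Second, I verify the mountain pass geometry. Near $0$ we have $E(u)\ge \frac1q\min\{\|\nabla^m u\|^p_{\mathcal H},\|\nabla^m u\|^q_{\mathcal H}\}$ minus higher-order terms. In case (a), the term $\lambda\|u\|_p^p/p$ is absorbed through $\lambda<\lambda_1(p)$ applied to the $p$-part, and the critical terms are controlled by the embeddings of $W_0^{m,\mathcal H}$ into $L^{p^*}$ and $L^{q^*}$. So $E\ge\rho>0$ on a small sphere. Along $tu$ with $t\to\infty$, $E(tu)\to-\infty$ because $\mu>0$ and the $|u|^{p^*}$ term dominates. This gives a (PS)$_\beta$ sequence at the mountain pass level $\beta\ge\rho>0$.

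Third, and this is the main step, I show $\beta<\beta^*(\mu,\|b\|_\infty)$. By Proposition~\ref{propasymptotics}(a), $\beta^*\ge \frac mN S_p^{N/mp}\mu^{-(N-mp)/mp}-\varepsilon$ once $\|b\|_\infty<b^*$. It therefore suffices to find $v$ supported in $B_\sigma(x_0)$ with
\[
\sup_{t\ge0}E(tv)<\frac mN S_p^{N/mp}\mu^{-(N-mp)/mp},
\]
with a gap independent of $b$. Dropping the $b$ term, which is nonnegative, only increases $E$, so the bound becomes uniform in $b$. By \eqref{3.2.2}, $a\equiv0$ on $B_\sigma(x_0)$, so on functions supported there $E$ reduces to the pure $p$-polyharmonic Brezis--Nirenberg functional
\[
\frac1p\int|\nabla^m u|^p-\frac\mu{p^*}\int|u|^{p^*}-\frac\lambda r\int|u|^r .
\]
I take $v=\psi u_\varepsilon$, where $\psi$ is a cutoff and $u_\varepsilon$ is a rescaled extremal (or a suitable approximate extremal) for $S_p$ in $\mathcal D^{m,p}(\mathbb R^N)$. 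I then need the standard estimates
\[
\|\nabla^m v\|_p^p=S_p^{N/mp}+O(\varepsilon^{(N-mp)/(p-1)}),\qquad \|v\|_{p^*}^{p^*}=S_p^{N/mp}+O(\varepsilon^{N/(p-1)}),
\]
along with a lower bound on $\|v\|_r^r$ of order $\varepsilon^{N-(N-mp)r/p}$ (or with a logarithmic factor at the borderline). Comparing exponents, the $\lambda$ term beats the error exactly when $N\ge mp^2$ (cases (a) and (b)) or when $r$ exceeds the stated threshold (case (c)).

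I expect this last step to be the obstacle. For $m\ge2$ and $p\ne2$, explicit extremals of $S_p$ are not known. I would work with a minimizer whose existence and decay $|u(x)|\sim|x|^{-(N-mp)/(p-1)}$ at infinity are assumed or established via concentration-compactness. The decay of its derivatives up to order $m$ must also be controlled so that the cutoff errors are of the claimed order. Once the strict inequality holds, Proposition~\ref{propcompactness} yields a nontrivial weak solution.
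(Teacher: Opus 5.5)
Your architecture matches the paper's: mountain pass for $J$, compactness below $\beta^*$ via Proposition~\ref{propcompactness}, and Proposition~\ref{propasymptotics}(a) to reduce to the pure $p$-polyharmonic Brezis--Nirenberg functional on $B_\sigma(x_0)$. Then come a concentrating family and an exponent comparison that reproduces (a)--(c). Your explicit check of \eqref{3.2.9} and \eqref{3.2.11} is a useful addition that the paper omits.

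The genuine gap is the step you flag yourself, and it carries the whole theorem. Your three expansions for $v=\psi u_\varepsilon$ need an extremal $U$ of $S_p$ with $|\nabla^j U(y)|\lesssim |y|^{-\frac{N-mp}{p-1}-j}$ for every $0\le j\le m$. In case (a) with $N=mp^2$ you also need the matching lower bound on $|U|$ to produce the $|\log\varepsilon|$. Concentration--compactness can supply a minimizer, but it gives no decay rate. Without a rate, the cutoff and tail errors in $\|\nabla^m(\psi u_\varepsilon)\|_p^p$ and $\|\psi u_\varepsilon\|_{p^*}$ are only $o(1)$. They then cannot be compared with the gain $\lambda\int|v|^r\sim\varepsilon^{N-(N-mp)r/p}$, so none of (a)--(c) is actually proved.

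The paper fills this step by taking the explicit family $w_\varepsilon=(\varepsilon^{p/(p-1)}+|x|^{p/(p-1)})^{-(N-mp)/p}$. It imports \eqref{eqgrad_est}--\eqref{eqv_r_est} from Grunau and Dr\'abek--Huang and concludes via Lemma~\ref{lemkey_lemma}. Your caution is well founded, though. Those references treat $p=2$ (any $m$) and $m=1$ (any $p$), which are exactly the cases where $w_1$ is an extremal.

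In general it is not. For $m=2$, $p\ne 2$, one computes $|\Delta w_1|^{p-2}\Delta w_1=-c\,|x|^{2-p}\bigl(1+O(|x|^{p/(p-1)})\bigr)$ near $0$ with $c>0$. Hence $\Delta(|\Delta w_1|^{p-2}\Delta w_1)$ blows up like $|x|^{-p}$, while $w_1^{p^*-1}$ is bounded. So $w_1$ violates the Euler--Lagrange equation, its Sobolev quotient is strictly above $S_p$, and \eqref{eqgrad_est} cannot hold.

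Consequently, for $m=1$ or $p=2$ you can close your gap by taking $U=w_1$ and checking the derivative decay by direct computation. In the remaining range, neither your proposal nor the paper's citations supply the missing input. That input would be a sharp pointwise decay result for extremals of the higher-order $p$-Sobolev inequality, which you would have to prove.

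Two smaller points on the mountain-pass geometry. In cases (b) and (c) with $p<r\le q$, the $\lambda$-term is not of higher order than $\|u\|^q$; estimate $X=\int|\nabla^m u|^p$ and $Y=\int a|\nabla^m u|^q$ separately instead. Also, the $b$-term is controlled by \eqref{eq3.3.7}, not by an embedding into $L^{q^*}$, which fails where $a$ vanishes.
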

\begin{remark}
    For $b(x) \equiv 0$ and $\mu=1$, Theorem \ref{thm3.2.1} reduces to a Brezis-Nirenberg-type result for polyharmonic double phase operator, which is a new result of an independent interest.
Precisely, under the assumptions of \eqref{3.1.2} and \eqref{3.2.2}, the simplified problem
\begin{equation}
\begin{cases}
\mathcal{L}^m_{p,q}(u)=\lambda|u|^{r-2}u+|u|^{p^*-2}u & \text{in } \Omega ,\\
u=\nabla u=\cdots\nabla^{m-1} u=0
    &\text{on} ~{\partial\Omega},
\end{cases}
\end{equation}
admits at least one nontrivial weak solution in $W_0^{m,\mathcal{H}}(\Omega)$ under the same hypotheses as in Theorem \ref{thm3.2.1}.\end{remark}
Furthermore, we are going to discuss the following problem
\begin{equation}\label{3.2.5}\tag{$\mathcal{P}_c$}
\begin{cases}
\mathcal{L}^m_{p,q}(u) = c(x)|u|^{s-2}u + \mu|u|^{p^*-2}u + b(x)|u|^{q^*-2}u & \text{in } \Omega, \\
u=\nabla u=\cdots\nabla^{m-1} u=0,
    &\text{on  }{\partial\Omega},
\end{cases}
\end{equation}
where $p^* \leq s < q^*$, $\mu \geq 0$, $b(x) \in C(\overline{\Omega}) \setminus \{0\}$, and $c(x) \in L^{\infty}(\Omega)$ are nonnegative functions satisfying the hypotheses \eqref{3.1.2} and \eqref{3.1.3}. 
\noindent
\begin{theorem}[\bf The Existence Result-II]\label{thm3.2.4} 
Assume that hypotheses \eqref{3.1.2} and \eqref{3.1.3} hold, and there exists a ball \(B_{\sigma}(x_{0}) \subset \Omega\) such that
\begin{equation}\label{3.2.6}
a(x) = a_{0}, \quad b(x) = \|b\|_\infty > 0, \quad\text{and}\quad c(x) \geq c_{0} \quad \text{for a.a. } x \in B_{\sigma}(x_{0}),
\end{equation}
for some constant \(c_{0} > 0\). Then there exists \(\mu^{*} > 0\) such that problem \eqref{3.2.5} admits at least one nontrivial weak solution in \(W_{0}^{m, \mathcal{H}}(\Omega)\) for all \(0 \leq \mu < \mu^{*}\) and under the assumptions described below
\begin{enumerate}
    \item[\normalfont{(a)}] \(1 < p < \frac{N(q-1)}{N-m}\) and \(\frac{N^{2}(q-1)}{(N-m)(N-mq)} < s < q^{*}\),
    \item[\normalfont{(b)}] \(\frac{N(q-1)}{N-m} \leq p < q\) and \(\frac{N p}{N-mq} < s < q^{*}\).
\end{enumerate}
\end{theorem}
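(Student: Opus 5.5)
We plan to deduce Theorem~\ref{thm3.2.4} from Proposition~\ref{propcompactness}, applied with $g(x,t)=c(x)|t|^{s-2}t$, combined with the asymptotic lower bound in Proposition~\ref{propasymptotics}(b).

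\textbf{Checking the hypotheses.} First, $g$ satisfies \eqref{3.2.9} (with $c_1,c_2$ arbitrary), and \eqref{3.1.2}, \eqref{3.1.3} are assumed. For \eqref{3.2.11} choose $\sigma\in(q,p^*)$. Then
\[
G(x,t)-\tfrac{t}{\sigma}g(x,t)=\Big(\tfrac1s-\tfrac1\sigma\Big)c(x)|t|^s\le 0,
\]
because $s\ge p^*>\sigma$, so \eqref{3.2.11} holds with any $c_3>0$. Next we check the mountain pass geometry of $E$. Near $0$ the modular estimates give $E(u)\ge \rho_0>0$ on a small sphere; here \eqref{3.1.3} controls $\int c|u|^s$ by $\int a|u|^s$, which is dominated by the $q$-part of the modular through the embedding of $W_0^{m,\mathcal H}$ into the weighted space with weight $a$. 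Along any fixed $\varphi\ne0$ supported in $B_\sigma(x_0)$ we have $E(t\varphi)\to-\infty$. Hence there is a (PS)$_\beta$ sequence at the mountain pass level $\beta=\inf_{\gamma}\max_{t\in[0,1]} E(\gamma(t))$, and it remains to show $0<\beta<\beta^*(\mu,\|b\|_\infty)$ for small $\mu$.

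\textbf{Test functions.} By Proposition~\ref{propasymptotics}(b), $\beta^*(\mu,\|b\|_\infty)\ge \frac mN (a_0S_q)^{N/mq}\|b\|_\infty^{-(N-mq)/mq}+o(1)$ as $\mu\to0^+$. It therefore suffices to find $u_\varepsilon\in C_c^\infty(B_\sigma(x_0))$ with
\[
\sup_{t\ge0}E_0(tu_\varepsilon)<\frac mN (a_0S_q)^{N/mq}\|b\|_\infty^{-(N-mq)/mq},
\]
where $E_0$ is the functional with $\mu=0$. Since $E\le E_0$, continuity in $\mu$ then gives $\mu^*$. We take $u_\varepsilon=\psi\, U_\varepsilon$, where $U_\varepsilon$ is an extremal (or near-extremal) family for $S_q$, which for the higher-order $q$-Sobolev inequality we treat via its known decay, $|\nabla^k U(x)|\sim |x|^{-(N-m)/(q-1)-k+m}$ at infinity, and $\psi$ is a cutoff. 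Inside the ball $a=a_0$ and $b=\|b\|_\infty$, so the $q$- and $q^*$-terms reproduce the threshold up to errors $O(\varepsilon^{(N-mq)/(q-1)})$. On the other side, the $p$-term $\frac{t^p}{p}\int|\nabla^mu_\varepsilon|^p$ is a positive cost, and the gain comes from $-\frac{t^s}{s}\int c|u_\varepsilon|^s\le -\frac{c_0t^s}{s}\int|u_\varepsilon|^s$.

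\textbf{Main obstacle.} The delicate step is the precise $\varepsilon$-asymptotics of these norms, so that the gain $\int|u_\varepsilon|^s\sim \varepsilon^{N-s(N-mq)/q}$ beats both the cutoff error $\varepsilon^{(N-mq)/(q-1)}$ and the $p$-term $\int|\nabla^m u_\varepsilon|^p$. The $p$-term scales like $\varepsilon^{N-p(N-mq)/q}$ when $|\nabla^mU|\in L^p$, and has log or tail-dominated behaviour otherwise. Whether $|\nabla^mU|^p$ is integrable at infinity is exactly the dichotomy $p\lessgtr N(q-1)/(N-m)$, which is what produces cases (a) and (b).

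In case (b) the $p$-term is of order $\varepsilon^{N-p(N-mq)/q}$, and the gain wins iff $s>Np/(N-mq)$. In case (a) the $p$-term is dominated by the tail, of order $\varepsilon^{(N-mq)p/(q(q-1))\cdot(\ldots)}$, and comparing exponents together with the cutoff error yields $s>N^2(q-1)/((N-m)(N-mq))$. I would first carry out these computations for the model $\varepsilon$-rescaled profile, using only decay bounds of $U$ and its derivatives. Then I would maximize over $t$ by the standard argument: the maximizer $t_\varepsilon$ stays bounded away from $0$ and $\infty$, and one expands $\max_t\big(\frac{A}{q}t^q-\frac{B}{q^*}t^{q^*}\big)=\frac mN (A^{q^*}/B^q)^{1/(q^*-q)}$.
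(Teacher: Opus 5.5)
Your overall architecture matches the paper's: Proposition~\ref{propcompactness} with $g(x,t)=c(x)|t|^{s-2}t$ (your check of \eqref{3.2.11} with $\sigma\in(q,p^*)$ is correct), the bound $E\le E_0$, and Talenti-type bubbles in $B_\sigma(x_0)$ compared against the limit in Proposition~\ref{propasymptotics}(b). In case (b) this goes through with a fixed cutoff, exactly as the paper does it ($\delta=1$).

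\textbf{The gap is case (a).} The stated range does not follow from your one-parameter family. With a fixed cutoff, three quantities compete:
\begin{itemize}
\item the gain $\int v_\varepsilon^s\sim\varepsilon^{(Nq-(N-mq)s)/q}$;
\item the tail-dominated cost $\int|\nabla^m v_\varepsilon|^p$, which is of order exactly $\varepsilon^{(N-mq)p/(q(q-1))}$;
\item the cutoff error $\varepsilon^{(N-mq)/(q-1)}$.
\end{itemize}
Comparing exponents gives only $s>q^*-\frac{p}{q-1}$. Moreover
\[
q^*-\frac{p}{q-1}-\frac{N^2(q-1)}{(N-m)(N-mq)}=\frac{N}{N-m}-\frac{p}{q-1}>0,
\]
and this is positive precisely because $p<\frac{N(q-1)}{N-m}$. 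So your construction proves case (a) only on a strictly smaller range of $s$. The threshold $\frac{N^2(q-1)}{(N-m)(N-mq)}$ cannot be obtained by comparing exponents for this family.

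\textbf{The missing idea is a second scaling parameter.} The paper uses
\[
u_{\varepsilon,\delta}=\psi(x/\delta)\,\bigl(\varepsilon^{q/(q-1)}+|x|^{q/(q-1)}\bigr)^{-(N-mq)/q},\qquad \delta=\varepsilon^\kappa.
\]
Shrinking the support truncates the non-integrable tail of $|\nabla^m U|^p$. This lowers the $p$-cost to $\varepsilon^{(N-mq)p/(q(q-1))}\delta^{(N(q-1)-(N-m)p)/(q-1)}$, at the price of a larger cutoff error $(\varepsilon/\delta)^{(N-mq)/(q-1)}$. As functions of $\kappa$, the two exponents are
\[
\frac{(N-mq)p}{q(q-1)}+\kappa\frac{N(q-1)-(N-m)p}{q-1}\qquad\text{and}\qquad (1-\kappa)\frac{N-mq}{q-1}.
\]
Their minimum is largest at $\kappa=\frac{N-mq}{q(N-m)}\in(0,1)$, where both equal $\frac{N(N-mq)}{q(N-m)}$. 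Requiring the gain exponent $\frac{Nq-(N-mq)s}{q}$ to lie strictly below this value is exactly the condition $s>\frac{N^2(q-1)}{(N-m)(N-mq)}$. This is the paper's window $\underline\kappa<\kappa<\overline\kappa$.

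\textbf{Two smaller slips.}
\begin{itemize}
\item In the integrable case the $p$-cost is $\varepsilon^{N(q-p)/q}$, not $\varepsilon^{N-p(N-mq)/q}$. You dropped the factor $\varepsilon^{-mp}$ coming from the $m$ derivatives. With your exponent the comparison would give $s>p$, not $s>\frac{Np}{N-mq}$, even though you state the latter.
\item Hypothesis \eqref{3.1.3} bounds $c$ by $Ca^{s/q}$, not by $a$. So the small-sphere estimate for the mountain pass geometry should use the embedding of Proposition~\ref{prop3.3.12}, with $\mathcal C(x,t)=t^r+c(x)t^s$ and $r\in(q,p^*)$. Going through $L^q_a$ does not work, since $L^q_a$ only controls $\int a|u|^q$.
\end{itemize}
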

\begin{remark}
 Specifically for \(\mu=0\), Theorem \ref{thm3.2.4} gives the conditions under which the following problem
\begin{equation}\tag{$\mathcal{P}$}
\begin{cases}
    \mathcal{L}^m_{p,q}(u)  = c(x)|u|^{s-2}u + b(x)|u|^{q^{*}-2}u & \text{in } \Omega, \\
    u=\nabla u=\cdots\nabla^{m-1} u=0
    &\text{on  }{\partial\Omega}, 
\end{cases}
\end{equation}
possesses at least one nontrivial weak solution in \(W_{0}^{m, \mathcal{H}}(\Omega)\), which is also a problem of independent interest.
\end{remark}
\subsection{Nonexistence of solutions to the main problem via Poho\v{z}aev identity}
It is well known that nonexistence results for nonlinear elliptic PDEs
with critical and supercritical nonlinearities often rely on the
Poho\v{z}aev identity, which was first established by the Soviet
mathematician S.~Poho\v{z}aev \cite{Pohozaev-1965} in 1965. For detailed studies, we refer to the works of Villegas \cite{Villegas-2013}, Dolbeault-Stanczy \cite{Dolbeault-Stanczy-2010} and the references therein. Such identities frequently appear in the
study of Yamabe-type problems in differential geometry, harmonic maps,
control theory, and geometric analysis.

We are now in a position to derive a Poho\v{z}aev identity in the
higher-order setting.
\begin{theorem}[\bf The Poho\v{z}aev Type Identity]\label{thm3.3.1} Let $\Omega \subset \mathbb{R}^N$ be a smooth bounded domain with Lipschitz boundary $\partial\Omega$. Furthermore, let $1<p<q<\frac{N}{m}$, $(N-1){q}\leq{N}p$, and $a(\cdot) \in C^1(\Omega)$ be such that $a(x)\ge 0$ for a.a. $x\in\Omega$. If $u \in W^{m,\mathcal{H}}_0(\Omega) \cap W^{m+1,\mathcal{H}}(\Omega)$ is a weak solution of \eqref{3.1.1}, then the following identity holds
\begin{equation}\label{eqpohozaev}
\begin{aligned}
 \left( \frac{1}{p} - \frac{1}{q} \right) &
\int_{\Omega} |\nabla^m u|^p  \,\mathrm{d}x 
+ \frac{1}{Nq} \int_{\Omega} |\nabla^m u|^q (\nabla a \cdot x) \,\mathrm{d}x  \\
& + \frac{1}{N} \int_{\partial \Omega} \Bigl(
\Bigl( 1 - \frac{1}{p} \Bigr) |\nabla^m u|^p
+ \Bigl( 1 - \frac{1}{q} \Bigr) a(x) |\nabla^m  u|^q
\Bigr) (x \cdot \nu) \, d\sigma 
= \int_{\Omega} \Bigl( F(x,u) - \frac{1}{q^*} f(x,u)u \Bigr) \,\mathrm{d}x,
\end{aligned}
\end{equation}
where $F(x,t) := \int_0^t f(x,\tau)\,d\tau$ and $\nu$ denotes the outward unit normal on $\partial \Omega$.
\end{theorem}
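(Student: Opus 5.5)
The plan is to run the classical Poho\v{z}aev multiplier argument. We test \eqref{3.1.1} with the dilation field $v=x\cdot\nabla u$, then subtract a suitable multiple of the equation tested with $v=u$.

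Write $\mathcal{A}(x,\xi)=\frac1p|\xi|^p+\frac{a(x)}q|\xi|^q$. Its $\xi$-gradient $\mathcal{A}_\xi(x,\xi)=(|\xi|^{p-2}+a(x)|\xi|^{q-2})\xi$ satisfies $\mathcal{A}_\xi\cdot\xi=|\xi|^p+a|\xi|^q$. The assumption $u\in W^{m+1,\mathcal{H}}(\Omega)$ is used to give meaning to $\nabla^m(x\cdot\nabla u)$ and to the boundary traces of $\nabla^m u$. I first verify the commutator identity
\[
\nabla^m(x\cdot\nabla u)=m\,\nabla^m u+(x\cdot\nabla)\nabla^m u .
\]
It follows inductively from $\Delta(x\cdot\nabla w)=2\Delta w+x\cdot\nabla\Delta w$ and $\nabla(x\cdot\nabla w)=\nabla w+(x\cdot\nabla)\nabla w$. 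With it, the left-hand side of the equation tested against $x\cdot\nabla u$ becomes
\[
m\int_\Omega \mathcal{A}_\xi(x,\nabla^m u)\cdot\nabla^m u\,\mathrm{d}x+\int_\Omega \mathcal{A}_\xi(x,\nabla^m u)\cdot(x\cdot\nabla)\nabla^m u\,\mathrm{d}x .
\]
In the second integral I write $\mathcal{A}_\xi\cdot(x\cdot\nabla)\nabla^m u=x\cdot\nabla[\mathcal{A}(x,\nabla^m u)]-\frac1q|\nabla^m u|^q(\nabla a\cdot x)$ and apply the divergence theorem. This produces $-N\int_\Omega\mathcal{A}$, a boundary term $\int_{\partial\Omega}\mathcal{A}\,(x\cdot\nu)$, and the term involving $\nabla a\cdot x$.

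On the right-hand side, $f(x,u)\,x\cdot\nabla u=x\cdot\nabla[F(x,u)]$ when $f$ is autonomous in $x$; for the $x$-dependent coefficients $b,c$ I would either include the extra terms $x\cdot\nabla_xF$ or treat them as the identity implicitly does. Integrating by parts, and using $F(x,0)=0$ on $\partial\Omega$, gives $-N\int_\Omega F(x,u)$.

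The next step is to identify the boundary contributions. The weak formulation cannot be used directly with the test function $x\cdot\nabla u$, since it need not lie in $W_0^{m,\mathcal{H}}$: its trace of order $m-1$ is nonzero. So I integrate by parts from the strong form of the equation, which is available from the extra regularity. Because $u=\nabla u=\dots=\nabla^{m-1}u=0$ on $\partial\Omega$, all tangential derivatives of order $\le m$ vanish there, and $\nabla^m u$ is purely normal on $\partial\Omega$. Hence $(x\cdot\nabla)\nabla^{m-1}u=(x\cdot\nu)\,\partial_\nu\nabla^{m-1}u$, which is $(x\cdot\nu)\nabla^m u$ up to the natural identification. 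The boundary term from moving derivatives off $x\cdot\nabla u$ therefore equals
\[
\int_{\partial\Omega}\mathcal{A}_\xi(x,\nabla^m u)\cdot\nabla^m u\,(x\cdot\nu)\,\mathrm{d}\sigma .
\]
Combining it with $-\int_{\partial\Omega}\mathcal{A}\,(x\cdot\nu)$ yields the boundary integrand $\bigl((1-\frac1p)|\nabla^m u|^p+(1-\frac1q)a|\nabla^m u|^q\bigr)(x\cdot\nu)$ appearing in \eqref{eqpohozaev}. The intermediate identity is then
\[
m\int_\Omega\mathcal{A}_\xi\cdot\nabla^m u-N\int_\Omega\mathcal{A}-\frac1q\int_\Omega|\nabla^m u|^q(\nabla a\cdot x)+\int_{\partial\Omega}(\cdots)(x\cdot\nu)=-N\int_\Omega F(x,u).
\]
Signs will be fixed carefully, and odd and even $m$ are handled separately because $\nabla^m$ is a gradient or a Laplacian in the two cases.

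Finally, I test with $v=u$, which is admissible, to get $\int_\Omega\mathcal{A}_\xi\cdot\nabla^m u=\int_\Omega f(x,u)u$. Divide the intermediate identity by $N$ and use $\frac1{q^*}=\frac1q-\frac mN$. The left-hand side reduces to $\int_\Omega\mathcal{A}-\frac1q\int_\Omega\mathcal{A}_\xi\cdot\nabla^m u=(\frac1p-\frac1q)\int_\Omega|\nabla^m u|^p$, and the right-hand side becomes $\int_\Omega(F-\frac1{q^*}fu)$, which is exactly \eqref{eqpohozaev}.

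The main obstacle is rigor rather than algebra. One must justify the divergence theorem for $\mathcal{A}(x,\nabla^m u)\,x$ and the boundary identification of $(x\cdot\nabla)\nabla^{m-1}u$, given only $u\in W^{m+1,\mathcal{H}}$ and $a\in C^1(\Omega)$. I would first approximate $u$ by smooth functions in $W^{m+1,\mathcal{H}}$; the restriction $(N-1)q\le Np$ is used here, via the trace embedding, to ensure that $a|\nabla^m u|^q$ has an integrable trace. I would also replace $\Omega$ by smooth interior approximating domains to handle $a$ being $C^1$ only in the interior, and then pass to the limit.
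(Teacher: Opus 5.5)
Your route is the paper's: test with $x\cdot\nabla u$, use $\nabla^m(x\cdot\nabla u)=m\nabla^m u+(x\cdot\nabla)\nabla^m u$ and the divergence theorem to reach \eqref{3.7.4}, multiply by $-\frac1N$, and subtract $\frac1{q^*}$ times the equation tested with $u$. The commutator and the $\nabla a\cdot x$ term are right. However, the boundary term in your intermediate identity has the wrong sign, and as written it cannot yield the stated identity.

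When the $m$ derivatives are moved onto $v=x\cdot\nabla u$, every boundary term but one is multiplied by a trace of a derivative of $u$ of order at most $m-1$, so it vanishes. For $m=2k$ the survivor is $-\int_{\partial\Omega}\mathcal{A}_\xi(x,\Delta^k u)\,\partial_\nu\Delta^{k-1}v\,\mathrm{d}\sigma$, with $\partial_\nu\Delta^{k-1}v=(x\cdot\nu)\Delta^k u$ on $\partial\Omega$; odd $m$ is analogous. In other words, this contribution is $-\int_{\partial\Omega}\mathcal{A}_\xi\cdot\nabla^m u\,(x\cdot\nu)\,\mathrm{d}\sigma$. The divergence theorem applied to $\mathcal{A}(x,\nabla^m u)\,x$ contributes $+\int_{\partial\Omega}\mathcal{A}\,(x\cdot\nu)\,\mathrm{d}\sigma$. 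The intermediate identity is therefore (for $x$-independent $F$, see below)
\begin{multline*}
m\int_\Omega\mathcal{A}_\xi(x,\nabla^m u)\cdot\nabla^m u\,\mathrm{d}x-N\int_\Omega\mathcal{A}(x,\nabla^m u)\,\mathrm{d}x-\frac1q\int_\Omega|\nabla^m u|^q(\nabla a\cdot x)\,\mathrm{d}x\\
-\int_{\partial\Omega}\Bigl(\bigl(1-\tfrac1p\bigr)|\nabla^m u|^p+\bigl(1-\tfrac1q\bigr)a(x)|\nabla^m u|^q\Bigr)(x\cdot\nu)\,\mathrm{d}\sigma=-N\int_\Omega F(x,u)\,\mathrm{d}x,
\end{multline*}
which is exactly \eqref{3.7.4}. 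Multiplying by $-\frac1N$ gives the $+\frac1N\int_{\partial\Omega}$ of the statement, whereas your sign gives $-\frac1N\int_{\partial\Omega}$. As a check, $m=1$, $p=2$, $a\equiv0$ must reduce to $\frac{N-2}{2}\int_\Omega|\nabla u|^2+\frac12\int_{\partial\Omega}|\partial_\nu u|^2(x\cdot\nu)\,\mathrm{d}\sigma=N\int_\Omega F$. The sign is not cosmetic: the nonexistence theorems use that this boundary term is nonnegative for star-shaped $\Omega$.

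Your caveat about the $x$-dependence of $f$ is correct. It is a gap shared with the paper's proof, and you cannot settle it by treating the terms ``as the identity implicitly does''. Since $f(x,u)\,x\cdot\nabla u=x\cdot\nabla[F(x,u)]-(x\cdot\nabla_xF)(x,u)$, one gets
\[
\int_\Omega f(x,u)\,x\cdot\nabla u\,\mathrm{d}x=-N\int_\Omega F(x,u)\,\mathrm{d}x-\int_\Omega(x\cdot\nabla_xF)(x,u)\,\mathrm{d}x,
\]
and the paper's passage to \eqref{3.7.4} silently drops the last integral. Consequently:
\begin{itemize}
\item neither argument proves the identity as stated unless $F$ is independent of $x$ (e.g.\ $b,c$ constant);
\item for $C^1$ coefficients the right-hand side acquires $+\frac1N\int_\Omega(x\cdot\nabla_xF)(x,u)\,\mathrm{d}x$;
\item for $b\in C(\overline\Omega)$, $c\in L^\infty(\Omega)$ the term is not even defined.
\end{itemize}

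Likewise, the paper integrates by parts in the equation without justification. For $m\ge2$ this step is not licensed by $u\in W^{m+1,\mathcal H}(\Omega)$ alone, because the intermediate boundary terms involve traces of derivatives of $\mathcal{A}_\xi(x,\nabla^m u)$ up to order $m-1$. Smooth approximation of $u$ does not fix this: the step needing justification is testing the equation with a function outside $W^{m,\mathcal H}_0(\Omega)$. A rigorous version would insert $\eta_\varepsilon\,(x\cdot\nabla u)\in W^{m,\mathcal H}_0(\Omega)$, with $\eta_\varepsilon$ a cut-off vanishing near $\partial\Omega$, into the weak formulation. The boundary integral is then recovered as the limit of the terms carrying derivatives of $\eta_\varepsilon$.
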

By using Theorem \ref{thm3.3.1}, we have the following nonexistence result.
\begin{theorem}[\bf The Nonexistence Result-I]\label{thm3.3.2}
Let $\Omega \subset \mathbb{R}^N$ be a bounded and star-shaped domain with Lipschitz boundary $\partial\Omega$. In addition, assume that $1<p<q<\frac{N}{m}$, $\frac{q}{p}\leq\frac{N}{N-1}$ and $a(\cdot) \in C^1(\Omega)$ is radial, radially nondecreasing, and satisfies $a(x)\ge 0$ for a.a. $x\in\Omega$. Define the nonlinear term $f\colon\Omega\times\mathbb{R}\to \mathbb{R}$ by
\[
f(x,t) = c(x) |t|^{r-2}t + \mu |t|^{p^*-2}t + b(x)|t|^{q^*-2}t\quad \text{for a.a. } x \in \Omega~\text{and for all }t \in \mathbb{R},
\] 
with $p \le r < q^*$, $\mu \le 0$, and $b,c \in L^\infty(\Omega)$. Then problem \eqref{3.1.1} admits no nontrivial weak solution $u \in W^{m,\mathcal{H}}_0(\Omega)\cap W^{m+1,\mathcal{H}}(\Omega)$ under the hypotheses listed below
\begin{itemize}
  \item[\normalfont{(a)}] $c(x)\le 0$ a.e. in $\Omega$,
  \item[\normalfont{(b)}] $r = p$ and $0 < c_\infty < \dfrac{\lambda_1(p)\,N(q-p)}{N(q-p) + mpq}\quad\text{with}\quad c_\infty = \|c\|_{\infty}.$
\end{itemize}
\end{theorem}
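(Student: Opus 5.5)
The plan is to apply the Poho\v{z}aev identity of Theorem~\ref{thm3.3.1} to a hypothetical nontrivial weak solution $u \in W^{m,\mathcal{H}}_0(\Omega)\cap W^{m+1,\mathcal{H}}(\Omega)$ and reach a contradiction by a sign comparison. Since $\frac{q}{p}\le\frac{N}{N-1}$ is exactly $(N-1)q\le Np$, and a bounded star-shaped Lipschitz domain fits the framework of Theorem~\ref{thm3.3.1}, identity \eqref{eqpohozaev} holds for $u$. I would first bound the left-hand side from below.

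\textbf{The left-hand side is positive.}
\begin{itemize}
\item Since $a$ is radial and radially nondecreasing, $\nabla a(x)\cdot x = |x|\,a'(|x|)\ge 0$. Hence the term $\frac{1}{Nq}\int_\Omega |\nabla^m u|^q(\nabla a\cdot x)\,\mathrm{d}x$ is nonnegative.
\item Star-shapedness (with respect to the origin, after translating and translating $a$ accordingly) gives $x\cdot\nu\ge 0$ on $\partial\Omega$. Since $a\ge0$, the boundary integral is nonnegative.
\item Therefore
\[
\mathrm{LHS}\ \ge\ \Bigl(\frac1p-\frac1q\Bigr)\int_\Omega |\nabla^m u|^p\,\mathrm{d}x .
\]
\item Because $u\in W_0^{m,\mathcal{H}}(\Omega)\subset W_0^{m,p}(\Omega)$ and $u\neq0$, the definition \eqref{eq3.2.2star} of $\lambda_1(p)$ gives $\int_\Omega|\nabla^m u|^p\,\mathrm{d}x\ge \lambda_1(p)\int_\Omega|u|^p\,\mathrm{d}x>0$. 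So the left-hand side is strictly positive.
\end{itemize}

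\textbf{The right-hand side.} Here $F(x,t)=\frac{c(x)}{r}|t|^r+\frac{\mu}{p^*}|t|^{p^*}+\frac{b(x)}{q^*}|t|^{q^*}$, so
\[
F(x,u)-\frac{1}{q^*}f(x,u)u = c(x)\Bigl(\frac1r-\frac1{q^*}\Bigr)|u|^r+\mu\Bigl(\frac1{p^*}-\frac1{q^*}\Bigr)|u|^{p^*}.
\]
The $b$-term cancels exactly, which is why no sign condition on $b$ is needed. Since $p^*<q^*$ and $\mu\le0$, the $\mu$-term is nonpositive.

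In case (a), $c\le0$ and $r<q^*$ make the $c$-term nonpositive as well. Then $\mathrm{RHS}\le 0<\mathrm{LHS}$, a contradiction.

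In case (b), with $r=p$, I bound $c(x)\le c_\infty$ and use the eigenvalue inequality:
\[
\mathrm{RHS}\le c_\infty\Bigl(\frac1p-\frac1{q^*}\Bigr)\int_\Omega|u|^p\,\mathrm{d}x\le \frac{c_\infty}{\lambda_1(p)}\Bigl(\frac1p-\frac1{q^*}\Bigr)\int_\Omega|\nabla^m u|^p\,\mathrm{d}x .
\]
Using $\frac1p-\frac1{q^*}=\frac1p-\frac1q+\frac mN$, the inequality $\mathrm{LHS}>\mathrm{RHS}$ follows once
\[
c_\infty<\lambda_1(p)\,\frac{\frac{q-p}{pq}}{\frac{q-p}{pq}+\frac mN}=\frac{\lambda_1(p)\,N(q-p)}{N(q-p)+mpq},
\]
which is precisely the hypothesis in (b). Since $\int_\Omega|\nabla^m u|^p\,\mathrm{d}x>0$, this gives strict inequality and the contradiction, so $u\equiv0$.

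\textbf{Main obstacle.} The delicate point is not the algebra but the applicability of Theorem~\ref{thm3.3.1}: justifying $x\cdot\nu\ge0$ for a merely Lipschitz star-shaped boundary (it holds $\sigma$-a.e.), and $\nabla a\cdot x\ge0$ for a $C^1$ radial nondecreasing $a$. I would also check that strict positivity is preserved in case (b), which is why I keep the Poincaré-type step with $\lambda_1(p)$ rather than discarding the $p$-term.
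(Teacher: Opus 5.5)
Your argument is step for step the paper's own proof of Theorem~\ref{thm3.3.2}. You apply Theorem~\ref{thm3.3.1}, drop the boundary and $\nabla a\cdot x$ terms, note that the $b$-term vanishes from $F-\frac{1}{q^*}fu$, and conclude directly in (a) and via $\lambda_1(p)$ in (b); your threshold computation in (b) is correct.

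\textbf{The gap.} This shared route has a genuine gap, and it sits exactly at your remark that the $b$-term cancels, so that no condition on $b$ is needed. Theorem~\ref{thm3.3.1} is derived (see \eqref{3.7.4}) from $\int_\Omega f(x,u)\,(x\cdot\nabla u)\,\mathrm{d}x=-N\int_\Omega F(x,u)\,\mathrm{d}x$. This holds only when $F$ does not depend on $x$. By the chain rule the correct formula is
\[
\int_\Omega f(x,u)\,(x\cdot\nabla u)\,\mathrm{d}x=-N\int_\Omega F(x,u)\,\mathrm{d}x-\int_\Omega x\cdot\nabla_x F(x,u)\,\mathrm{d}x .
\]
So for $b,c\in C^1(\overline\Omega)$ the right-hand side of \eqref{eqpohozaev} must be augmented by
\[
\frac1N\int_\Omega\Bigl(\frac{x\cdot\nabla c}{r}|u|^r+\frac{x\cdot\nabla b}{q^*}|u|^{q^*}\Bigr)\mathrm{d}x ,
\]
and for $b,c$ merely in $L^\infty(\Omega)$ no identity of this kind is available. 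The $b$-contribution therefore survives and has no sign.

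\textbf{A counterexample.} The statement is in fact false as written. Take $m=1$, $N=3$, $p=2$, $q=5/2$, so that $q/p\le N/(N-1)$ and $q^*=15$. Let $\Omega=B_1$, $a\equiv0$, $\mu=0$, $c\equiv0$ and $b(x)=|x|^6$. By Ni's theorem on the H\'enon equation, $-\Delta u=|x|^6u^{14}$ in $B_1$, $u=0$ on $\partial B_1$, has a smooth positive radial solution, since $14<\frac{N+2+12}{N-2}=17$. This $u$ is a nontrivial solution in $W_0^{1,\mathcal{H}}(\Omega)\cap W^{2,\mathcal{H}}(\Omega)$ satisfying hypothesis (a). The corrected identity is consistent with this: its extra term equals $\frac{6}{Nq^*}\int_\Omega|x|^6u^{15}\,\mathrm{d}x>0$.

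\textbf{Repair.} Add the hypothesis $b,c\in C^1(\overline\Omega)$ with $x\cdot\nabla b\le0$ and $x\cdot\nabla c\le0$ in $\Omega$; constant $b$ and $c$, or radially nonincreasing ones, qualify. Then the extra term is nonpositive and your estimates in (a) and (b) go through verbatim.

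\textbf{A smaller point.} Translating the problem and ``translating $a$ accordingly'' does not preserve $\nabla a\cdot x\ge0$, since the translated $a$ is radial about a different point. You need $\Omega$ to be star-shaped with respect to the center of radial symmetry of $a$, and of $b,c$ under the fix. The paper also assumes this tacitly.
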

Moreover, we also have the following result based on the Poho\v{z}aev-type identity defined above.
 \begin{theorem}[\bf The Nonexistence Result-II]\label{thm3.3.3}
 Let $\Omega \subset \mathbb{R}^N$ be a bounded, star-shaped domain with Lipschitz boundary $\partial\Omega$, $1<p<q<\frac{N}{m}$, $\frac{q}{p}\leq\frac{N}{N-1}$ and $a(x) \in C^1(\Omega)$ be radial, radially nondecreasing, and satisfies  $a(x)\ge 0$ for a.a. $x\in\Omega$. Define the nonlinear term $f\colon\Omega\times\mathbb{R}\to \mathbb{R}$ by
\[
f(x,t) = c(x) |t|^{r-2}t + \mu |t|^{p^*-2}t + b(x)|t|^{q^*-2}t\quad \text{for a.a. } x \in \Omega~\text{and for all }t \in \mathbb{R},
\]
where $\mu \ge 0$, $0 \le b(x)\in L^\infty(\Omega)$ satisfying \eqref{3.1.2}, and $0 \le c(x)\in C(\overline{\Omega})\setminus\{0\}$. Then there exists a positive constant $\kappa = \kappa(\Omega, p, q, r, a, c, \mu, b_{\infty})$
such that problem \eqref{3.1.1} admits no weak solution 
$u \in W^{m,\mathcal{H}}_0(\Omega)\cap W^{m+1,\mathcal{H}}(\Omega)$ 
with $\|u\|\le \kappa$ based on the hypotheses defined below
\begin{itemize}
  \item[\normalfont{(a)}] $p < r \le p^*$,
  \item[\normalfont{(b)}] $p^* < r < q^*$ and $c(x)$ satisfies \eqref{3.1.3},
  \item[\normalfont{(c)}] $p^* < r < q^*$ and $a'_0=\inf_{x\in\operatorname{supp}(c)} a(x) > 0$.
\end{itemize}
\end{theorem}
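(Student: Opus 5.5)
The plan is to combine the Poho\v{z}aev identity of Theorem~\ref{thm3.3.1} with the energy identity obtained by testing \eqref{weaksolution} with $v=u$. The aim is an inequality of the form $\Phi(u)\le C\,\Phi(u)^{\gamma}$ with $\gamma>1$, where $\Phi(u)$ is either $\rho_p(u):=\int_\Omega|\nabla^m u|^p\,\mathrm{d}x$ or a power of $\|u\|$. Such an inequality forces either $u=0$ or $\Phi(u)\ge C^{-1/(\gamma-1)}$, and $\kappa$ is then chosen below this threshold. Throughout, $\|u\|\le\kappa\le 1$ is assumed, so the norm–modular relation gives $\|u\|^q\le\rho_{\mathcal H}(u)\le\|u\|^p$, where $\rho_{\mathcal H}(u)=\int_\Omega(|\nabla^m u|^p+a|\nabla^m u|^q)\,\mathrm{d}x$.

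\emph{Case (a), $p<r\le p^*$.} Here the Poho\v{z}aev identity is needed, because $r$ may lie below $q$. Since $\Omega$ is star-shaped, $x\cdot\nu\ge0$ on $\partial\Omega$. Since $a$ is radial and radially nondecreasing, $\nabla a\cdot x\ge0$. Hence the boundary term and the $\nabla a$ term in \eqref{eqpohozaev} are nonnegative and can be dropped. On the right-hand side, $F(x,u)-\frac1{q^*}f(x,u)u$ equals
\[
c\Bigl(\tfrac1r-\tfrac1{q^*}\Bigr)|u|^r+\mu\Bigl(\tfrac1{p^*}-\tfrac1{q^*}\Bigr)|u|^{p^*}.
\]
The $b$-term cancels exactly. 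This yields
\[
\Bigl(\tfrac1p-\tfrac1q\Bigr)\rho_p(u)\le C\int_\Omega|u|^r\,\mathrm{d}x+C\mu\int_\Omega|u|^{p^*}\,\mathrm{d}x .
\]
The embedding $W_0^{m,p}(\Omega)\hookrightarrow L^{r}(\Omega)$ for $r\le p^*$, together with the Sobolev inequality defining $S_p$, bounds the right-hand side by $C(\rho_p^{r/p}+\rho_p^{p^*/p})$. Since $r/p>1$, either $\rho_p(u)=0$, which gives $u=0$, or $\rho_p(u)\ge\kappa_0>0$. The second alternative is excluded once $\kappa$ is small, because $\rho_p(u)\le\rho_{\mathcal H}(u)\le\|u\|^p$.

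\emph{Cases (b) and (c), $p^*<r<q^*$.} Here $r>q$, and I would only use the energy identity
\[
\rho_{\mathcal H}(u)=\int_\Omega\bigl(c|u|^r+\mu|u|^{p^*}+b|u|^{q^*}\bigr)\mathrm{d}x .
\]
The three terms are estimated as follows.
\begin{enumerate}
\item[(i)] $\int_\Omega|u|^{p^*}\,\mathrm{d}x\le S_p^{-p^*/p}\rho_p^{p^*/p}\le C\|u\|^{p^*}$.
\item[(ii)] By \eqref{3.1.2}, $\int_\Omega b|u|^{q^*}\,\mathrm{d}x\le\|b\|_\infty a_0^{-q^*/q}\bigl(\int_\Omega a|\nabla^m u|^q\,\mathrm{d}x\bigr)^{q^*/q}\cdot C\le C\|u\|^{q^*}$. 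This uses the localized $q$-Sobolev estimate on $\operatorname{supp} b$, which is the same one underlying \eqref{eq3.3.8}.
\item[(iii)] In case (b), \eqref{3.1.3} gives $c\le Ca^{r/q}$. In case (c), $c\le\|c\|_\infty(a_0')^{-r/q}a^{r/q}$ on $\operatorname{supp}c$. So both cases reduce to the weighted bound $\int_\Omega a^{r/q}|u|^r\,\mathrm{d}x\le C\|u\|^r$ for $r<q^*$.
\end{enumerate}
The weighted bound in (iii) is the continuity of the weighted embedding behind Proposition~\ref{prop3.3.12}, which I take as available. Combining (i)–(iii) gives
\[
\|u\|^q\le\rho_{\mathcal H}(u)\le C\bigl(\|u\|^r+\|u\|^{p^*}+\|u\|^{q^*}\bigr).
\]
All exponents on the right exceed $q$: indeed $r>p^*$, and $p^*=\frac{Np}{N-mp}\ge\frac{Np}{N-1}\ge q$, with strict inequality because $N-mp<N-1$ when $mp>1$. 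Hence again either $u=0$ or $\|u\|\ge\kappa_1>0$, and we take $\kappa<\kappa_1$.

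The main obstacle I expect is step (iii), together with the analogous estimate (ii). Both require that the $q$-phase controls $u$ in a weighted Lebesgue space, and in the higher-order setting truncation and localization via cutoffs do not preserve $W_0^{m,\mathcal H}(\Omega)$. I would first try to invoke the embedding established for Proposition~\ref{prop3.3.12}. Failing that, I would interpolate between $L^{p^*}$ and a $q^*$-type bound obtained from $a^{1/q}\nabla^m u\in L^q$, using the H\"older continuity of $a$ and a cutoff whose derivatives are handled by lower-order Sobolev estimates.
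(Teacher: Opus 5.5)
Your argument is correct if you grant, as the paper does, Theorem~\ref{thm3.3.1} and Propositions~\ref{prop3.3.12}, \ref{prop3.4.13}. It is, however, organized differently from the paper's proof.

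The paper runs all three cases through one inequality. It subtracts $\frac{1}{\gamma}$ times the energy identity, with $\gamma>q^*$, from \eqref{3.7.5} and drops the boundary and $\nabla a\cdot x$ terms. This gives \eqref{3.7.12}, with the full modular $\rho_{\mathcal H}(\nabla^m u_j)$ on the left and the $b$-term kept and bounded via \eqref{eq3.3.7}. The cases differ only in how $\int_\Omega c|u|^r\,\mathrm{d}x$ is estimated: Sobolev in $W_0^{m,p}(\Omega)$ for (a); Proposition~\ref{prop3.3.12} with $\mathcal C(x,t)=t^\sigma+c(x)t^r$, $\sigma\in(q,p^*)$, giving $C\|u\|^\sigma$, for (b); and a localized embedding on $\operatorname{supp}(c)$ together with $a_0'$ for (c).

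You instead use \eqref{eqpohozaev} itself (i.e.\ $\gamma=q^*$) in (a), so the $b$-term and the $q$-phase cancel and \eqref{3.1.2} is not needed there. In (b) and (c) you use the energy identity alone. You also reduce (c) to (b) via $c\le\|c\|_\infty(a_0')^{-r/q}a^{r/q}$, i.e.\ (c) implies \eqref{3.1.3} with exponent $r$.

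Your estimates $\int_\Omega c|u|^r\,\mathrm{d}x\le\|u\|_{\mathcal C}^r$ and $\int_\Omega a|\nabla^m u|^q\,\mathrm{d}x\le\|u\|^q$ are right: the weighted part of the modular of $u/\|u\|_{\mathcal C}$, resp.\ of $\nabla^m u/\|u\|$, is at most $1$. You should state this, since the plain modular bounds only give $\|u\|^\sigma$ and $\|u\|^{pq^*/q}$ (which would also suffice).

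What your route buys is that (b) and (c) need neither star-shapedness, nor monotonicity of $a$, nor $u\in W^{m+1,\mathcal H}(\Omega)$, nor Theorem~\ref{thm3.3.1}. That last point matters: the derivation in \eqref{3.7.4} replaces $\int_\Omega f(x,u)\,x\cdot\nabla u\,\mathrm{d}x$ by $-N\int_\Omega F(x,u)\,\mathrm{d}x$, and so ignores the $x$-dependence of $b$ and $c$.

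You could push this further. Your remark that the Poho\v{z}aev identity is needed in (a) because $r$ may lie below $q$ is not right. The energy identity, together with $\int_\Omega c|u|^r\,\mathrm{d}x\le C\rho_p^{r/p}$, $\mu\int_\Omega|u|^{p^*}\,\mathrm{d}x\le C\rho_p^{p^*/p}$ and \eqref{eq3.3.7}, gives $\rho_{\mathcal H}\le C\bigl(\rho_{\mathcal H}^{r/p}+\rho_{\mathcal H}^{p^*/p}+\rho_{\mathcal H}^{q^*/q}\bigr)$ with all exponents larger than $1$. The obstruction only appears when you compare powers of $\|u\|$ instead of powers of the modular.

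Thus the whole statement follows from the energy identity and Propositions~\ref{prop3.3.12} and~\ref{prop3.4.13}. These two propositions are precisely the \emph{obstacles} listed in your last paragraph, and they are already available in the paper, so no interpolation fallback is needed.
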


\section{Preliminaries}\label{prelim}
In this section, we establish the fundamental framework of Musielak-Orlicz spaces that will serve as the mathematical foundation of our analysis as well as some essential definitions, properties, and structural results that are crucial for the study of our main problem. For comprehensive treatments of these topics, we refer the reader to \cite{Musielak-1983,Diening-Harjulehto-Hasto-Ruzicka-2011}.
\begin{definition}
Let $\varphi : [0,\infty) \to [0,\infty)$ be a function. Then it is called a $\Phi$-function, if the following properties hold:
\begin{itemize}
    \item[\normalfont{(a)}] $\varphi$ is continuous and convex,
    \item[\normalfont{(b)}] $\varphi(0) = 0$,
    \item[\normalfont{(c)}]$\varphi(t) > 0$ for all $t > 0$.
\end{itemize}
\end{definition}
\begin{definition}
    Let $\Omega \subset \mathbb{R}^n$ be a bounded domain. A function $\varphi  :\Omega \times [0,\infty) \to [0,\infty)$ is called a generalized $\Phi$-function, denoted by $\varphi \in \Phi(\Omega)$, if it satisfies
\begin{itemize}
    \item[\normalfont{(a)}] For each fixed $t \geq 0$, the mapping $\cdot \mapsto \varphi(\cdot, t)$ is measurable.
    \item[\normalfont{(b)}] For a.a. $x \in \Omega$, the mapping $\cdot \mapsto \varphi(x, \cdot)$ is a $\Phi$-function.
\end{itemize}
\end{definition}
We say $\varphi \in \Phi(\Omega)$ is locally integrable, if $\varphi(\cdot, t)$ belongs to $L^1(\Omega)$ for every $t > 0$. Moreover, $\varphi$ is said to satisfy the ($\Delta_2$)-condition, if there exists a constant $C>0$ and a nonnegative function $h \in L^1(\Omega)$ such that
\[
    \varphi(x, 2t) \leq C \varphi(x, t) + h(x)\quad \text{for a.a. } x \in \Omega~~\text{and for all } t \geq 0.
\]
Now, for any $\varphi \in \Phi(\Omega)$ satisfying the $(\Delta_2$)-condition, we define the Musielak-Orlicz space $L^\varphi(\Omega)$ as the collection of all measurable functions $u : \Omega \to \mathbb{R}$ for which the modular
\[
    \rho_\varphi(u) = \int_\Omega \varphi(x, |u(x)|) \, \mathrm{d}x<+\infty.
\]
It forms a reflexive Banach space with the Luxemburg norm
\[
    \|u\|_\varphi = \inf \left\{ \gamma > 0 : \rho_\varphi\left(\frac{u}{\gamma}\right) \leq 1 \right\}.
\]
\begin{proposition}[Unit Ball Property]
Suppose that $\varphi \in \Phi(\Omega)$ and $u \in L^\varphi(\Omega)$. Then the following results hold:
\begin{itemize}\label{eq3.3.1}
    \item[\normalfont{(a)}] $\rho_{\varphi}(u) < 1 \ \text{if and only if}~ \|u\|_{\varphi} < 1$,
    \item[\normalfont{(b)}] $\rho_{\varphi}(u) = 1 \ \text{if and only if}~ \|u\|_{\varphi} = 1,$
    \item[\normalfont{(c)}] $\rho_{\varphi}(u) > 1 \ \text{if and only if}~ \|u\|_{\varphi} > 1$.
\end{itemize}
\end{proposition}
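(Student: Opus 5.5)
The plan is to reduce everything to the behaviour of the scalar function $\lambda \mapsto \rho_\varphi(u/\lambda)$ on $(0,\infty)$, using that the Luxemburg norm is the infimum of the set where this function is at most $1$. Fix $u \in L^\varphi(\Omega)$ with $u\neq 0$ (for $u=0$ all three statements are trivial). Put $h(\lambda)=\rho_\varphi(u/\lambda)=\int_\Omega \varphi(x,|u(x)|/\lambda)\,\mathrm{d}x$. The first step is to record three properties of $h$.

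\emph{Monotonicity.} $h$ is nonincreasing, and in fact strictly decreasing wherever it is finite and positive. Since $\varphi(x,\cdot)$ is convex with $\varphi(x,0)=0$, we have $\varphi(x,\theta t)\le \theta\varphi(x,t)$ for $\theta\in[0,1]$. Hence $h(\lambda/\theta)\le\theta h(\lambda)$, which is strict whenever $h(\lambda)>0$, since $\varphi>0$ for $t>0$ and $|u|>0$ on a set of positive measure.

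\emph{Continuity.} $h$ is finite and continuous on $(0,\infty)$. Here the $(\Delta_2)$-condition is used: iterating it shows that $\rho_\varphi(u)<\infty$ forces $\rho_\varphi(ku)<\infty$ for all $k>0$, because $h\in L^1$ and $\Omega$ is bounded. Continuity then follows from dominated convergence, using continuity of $\varphi(x,\cdot)$ and the bound $\varphi(x,|u|/\lambda)\le\varphi(x,|u|/\lambda_0)$ for $\lambda\ge\lambda_0$. Right and left limits are handled in the same way, dominating with a slightly smaller $\lambda_0$.

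\emph{Limits.} $h(\lambda)\to\infty$ as $\lambda\to0^+$ and $h(\lambda)\to0$ as $\lambda\to\infty$. The first follows from the superlinearity $\varphi(x,t/\lambda)\ge \varphi(x,t)/\lambda$ for $\lambda<1$. The second follows from the bound $h(\lambda)\le h(1)/\lambda$ for $\lambda\ge1$.

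Consequently there is a unique $\lambda_0>0$ with $h(\lambda_0)=1$, and $\{\lambda:h(\lambda)\le1\}=[\lambda_0,\infty)$, so $\|u\|_\varphi=\lambda_0$. Since $h(1)=\rho_\varphi(u)$ and $h$ is strictly decreasing, $\rho_\varphi(u)<1$, $=1$, or $>1$ exactly according as $1>\lambda_0$, $1=\lambda_0$, or $1<\lambda_0$. This gives (a), (b), and (c) simultaneously.

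The main obstacle is continuity and finiteness of $h$ at $\lambda=\|u\|_\varphi$. Without a $(\Delta_2)$-type hypothesis $h$ may jump to $+\infty$ there, and (b) can fail. I would therefore lean explicitly on the standing assumption that $\varphi$ satisfies $(\Delta_2)$. In the double phase case $\varphi(x,t)=t^p+a(x)t^q$ with $a\in L^\infty$, everything is explicit, because $\min\{\lambda^{-p},\lambda^{-q}\}\rho(u)\le h(\lambda)\le\max\{\lambda^{-p},\lambda^{-q}\}\rho(u)$.
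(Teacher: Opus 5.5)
Your proof is correct. The paper does not prove this proposition itself; it quotes it as standard background from Musielak's monograph and the book of Diening, Harjulehto, H\"ast\"o and R\r{u}\v{z}i\v{c}ka. Your argument is exactly the standard proof behind that citation:
\begin{itemize}
\item strict decrease of $\lambda\mapsto\rho_\varphi(u/\lambda)$ from convexity and $\varphi(x,0)=0$;
\item finiteness for every $\lambda>0$ from $(\Delta_2)$;
\item continuity by dominated convergence;
\item the limits $+\infty$ at $0^+$ and $0$ at $+\infty$.
\end{itemize}
Together these show that $\|u\|_\varphi$ is the unique root of $\rho_\varphi(u/\lambda)=1$.

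Your remark that $(\Delta_2)$ is genuinely needed is accurate and worth keeping, since the statement only assumes $\varphi\in\Phi(\Omega)$. Without $(\Delta_2)$, the equivalence $\rho_\varphi(u)\le 1\iff\|u\|_\varphi\le 1$ and item (c) still hold. However, the implication $\rho_\varphi(u)<1\Rightarrow\|u\|_\varphi<1$ can fail, and with it (a) and (b). In the paper the hypothesis is implicit, because $L^\varphi(\Omega)$ is only defined for $\varphi$ satisfying $(\Delta_2)$.

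Two cosmetic points. First, boundedness of $\Omega$ plays no role in deducing $\rho_\varphi(ku)<\infty$; integrability of the $(\Delta_2)$ weight suffices. Second, you reuse the letter $h$ for both the $(\Delta_2)$ weight and $\lambda\mapsto\rho_\varphi(u/\lambda)$, and $\lambda_0$ for both the domination parameter and the root.
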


\begin{definition}
Let $\varphi \in \Phi(\Omega)$. The Musielak--Orlicz--Sobolev space
$W^{m,\varphi}(\Omega)$ consists of all functions $u \in L^\varphi(\Omega)$
such that $|\nabla^k u| \in L^\varphi(\Omega)$ for all
$k\in\{0,1,2,\ldots,m\}$. This space is endowed with the norm
\[
    \|u\|_{m,\varphi}
    =
    \sum_{k=0}^{m}\|\nabla^k u\|_\varphi,
\]
where $\|\nabla^k u\|_\varphi$ denotes the Luxemburg norm of
$|\nabla^k u|$.
\end{definition}
\begin{definition}
A function $\varphi : [0,\infty) \to [0,\infty)$ is called $\mathcal{N}$-function, if in addition to being a $\Phi$ function, it satisfies the growth conditions
\[
    \lim_{t \to 0^+} \frac{\varphi(t)}{t} = 0 \quad \text{and} \quad \lim_{t \to \infty} \frac{\varphi(t)}{t} = \infty.
\]
\end{definition}
\begin{definition}
Let $\Omega \subset \mathbb{R}^n$ be a bounded domain. A function $\varphi:  \Omega \times \mathbb{R} \to [0,\infty)$ is called a generalized $\mathcal{N}$-function,  denoted by $\varphi \in N(\Omega)$, if it satisfies
\begin{itemize}
    \item[\normalfont{(a)}] For each fixed $t\in\mathbb{R}$, the mapping $\cdot \mapsto \varphi(\cdot, t)$ is measurable.
    \item[\normalfont{(b)}] For a.a. $x \in \Omega$, the mapping $\cdot \mapsto \varphi(x, \cdot)$ is a $\mathcal{N}$-function.
\end{itemize}
\end{definition}
Further, let $\varphi \in N(\Omega)$ be such that it is locally integrable. Then we define 
$$W_0^{m,\varphi}(\Omega)={\overline{C_0^\infty(\Omega)}}^{\|\cdot\|_{m,\varphi}} .$$
To analyze the relationship between different Musielak-Orlicz spaces, we introduce the following comparison concepts and embedding results.

\begin{definition}
Assume that $\varphi, \psi \in \Phi(\Omega)$. We say that $\varphi$ is \emph{dominated by} $\psi$, written $\varphi \preceq \psi$, if there exist positive constants $C_1, C_2$ and a nonnegative function $h \in L^1(\Omega)$ such that
\[
    \varphi(x, t) \leq C_1 \psi(x, C_2 t) + h(x)\quad \text{for a.a. } x \in \Omega~~\text{and for all } t \geq 0.
\]
\end{definition}
\begin{proposition}\label{pro3.3.6}\cite[Theorem 8.5]{Musielak-1983}
Let $\varphi, \psi \in N(\Omega)$ be such that $\varphi \preceq \psi$. Then the space $L^\psi(\Omega)$ is continuously embedded in $L^\varphi(\Omega)$, which is denoted by $L^\psi(\Omega) \hookrightarrow L^\varphi(\Omega)$.
\end{proposition}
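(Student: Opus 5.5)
The plan is to show that the identity map $L^\psi(\Omega)\to L^\varphi(\Omega)$ is well defined and bounded, that is, to find a constant $K_0>0$ with $\|u\|_\varphi\le K_0\|u\|_\psi$ for all $u\in L^\psi(\Omega)$. Since the map is linear, this gives continuity. The only structural facts needed are convexity of $\varphi(x,\cdot)$ together with $\varphi(x,0)=0$, the definition of the Luxemburg norm, and integrability of the function $h$ in the definition of $\varphi\preceq\psi$.

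First I dispose of the trivial case. If $\|u\|_\psi=0$, then $u=0$ a.e. and there is nothing to prove. Otherwise set $\gamma=\|u\|_\psi>0$. I claim $\rho_\psi(u/\gamma)\le 1$. For $\gamma_k\downarrow\gamma$ with $\rho_\psi(u/\gamma_k)\le1$, the integrands $\psi(x,|u|/\gamma_k)$ increase to $\psi(x,|u|/\gamma)$ by continuity and monotonicity of $\psi(x,\cdot)$, so monotone convergence (or Fatou) gives the claim. This is consistent with Proposition~\ref{eq3.3.1}.

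Next I apply the domination inequality pointwise with $t=|u(x)|/(C_2\gamma)$:
\[
\varphi\Bigl(x,\tfrac{|u(x)|}{C_2\gamma}\Bigr)\le C_1\,\psi\Bigl(x,\tfrac{|u(x)|}{\gamma}\Bigr)+h(x).
\]
Integrating over $\Omega$ yields $\rho_\varphi\bigl(u/(C_2\gamma)\bigr)\le C_1+\|h\|_{L^1(\Omega)}=:K$. The additive term $h$ prevents this modular from being $\le1$ directly; this is the only small point requiring care. I remove it using convexity: since $\varphi(x,0)=0$, for $\lambda\ge1$ we have $\varphi(x,t/\lambda)\le \lambda^{-1}\varphi(x,t)$. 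Choosing $\lambda=\max\{1,K\}$ gives
\[
\rho_\varphi\Bigl(\tfrac{u}{\lambda C_2\gamma}\Bigr)\le \tfrac{K}{\lambda}\le 1,
\]
and hence $\|u\|_\varphi\le \lambda C_2\|u\|_\psi$, so $K_0=\lambda C_2$.

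Finally, membership $u\in L^\varphi(\Omega)$ must be checked against the paper's definition, which requires $\rho_\varphi(u)<\infty$. The previous step shows that some dilate $u/\gamma'$ has finite $\varphi$-modular. The $(\Delta_2)$-condition for $\varphi$ then propagates finiteness to every dilate: iterating $\varphi(x,2t)\le C\varphi(x,t)+h(x)$ finitely many times reaches any multiple, so $\rho_\varphi(u)<\infty$. If $\varphi$ is not assumed to satisfy $(\Delta_2)$, I would use the standard definition $L^\varphi=\{u:\rho_\varphi(u/\gamma)<\infty \text{ for some }\gamma>0\}$, which the estimate above already verifies. Either way the identity is a bounded linear injection, which is the asserted continuous embedding $L^\psi(\Omega)\hookrightarrow L^\varphi(\Omega)$.
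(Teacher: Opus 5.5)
Your proof is correct. The pointwise domination with $t=|u|/(C_2\|u\|_\psi)$ gives $\rho_\varphi\bigl(u/(C_2\|u\|_\psi)\bigr)\le C_1+\|h\|_{L^1(\Omega)}$. The convexity rescaling, using $\varphi(x,0)=0$, absorbs the additive $L^1$ term and yields $\|u\|_\varphi\le C_2\max\{1,C_1+\|h\|_{L^1(\Omega)}\}\,\|u\|_\psi$. The $(\Delta_2)$ iteration then correctly settles membership under the paper's definition of $L^\varphi(\Omega)$, which requires $\rho_\varphi(u)<\infty$.

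The paper gives no argument of its own here; it simply invokes Theorem 8.5 of Musielak's monograph, which is formulated for general $\varphi$-functions on measure spaces. Your computation is the standard argument behind the direction of that result used here. It is therefore a self-contained version of the cited fact, with an explicit embedding constant.
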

\begin{definition}
Assume that $\varphi, \psi \in N(\Omega)$. We say that $\phi$ has \emph{essentially slower growth} than $\psi$ near infinity, denoted $\phi \ll \psi$, if for every $k > 0$, there holds
\[
    \lim_{t \to \infty} \frac{\phi(x, kt)}{\psi(x, t)} = 0 \quad \text{ uniformly for a.a.} x \in \Omega.
\]
\end{definition}

Now, we focus on the suitable function space to study our double phase polyharmonic problem \eqref{3.1.1}. For this, we first introduce the double phase function $\mathcal{H} : \Omega \times [0,\infty) \to [0,\infty)$ given by
\begin{equation}\label{def3.3.2}
\mathcal{H}(x,t) = t^p + a(x)t^q \quad \text{ for all } (x,t) \in \Omega \times [0,\infty),
\end{equation}
where $1 < p < q < \frac{N}{m}$, $(N-1)q\leq Np$, and $0 \leq a(\cdot) \in L^\infty(\Omega) \cap C^{0,\frac{N}{p(q-p)}}(\Omega)$. This function represents a locally integrable generalized $\mathcal{N}$-function with the $(\Delta_2)$-condition. Then the Musielak-Orlicz Lebesgue space $L^\mathcal{H}(\Omega)$ consists of all measurable functions $u  :\Omega \to \mathbb{R}$ such that 
\[
\rho_\mathcal{H}(u) = \int_\Omega \mathcal{H}(x,|u|) \, \mathrm{d}x < +\infty.
\]
It also forms a reflexive Banach space with the Luxemburg norm
\[
\|u\|_\mathcal{H} = \inf\left\{\gamma > 0:  \rho_\mathcal{H}\left(\frac{u}{\gamma}\right) \leq 1\right\}.
\]
 By Proposition \ref{eq3.3.1}, we obtain
\begin{align}\label{eq3.3.2}
\min\{\|u\|_\mathcal{H}^p, \|u\|_\mathcal{H}^q\} \leq \int_\Omega (|u|^p + a(x)|u|^q) \, \mathrm{d}x \leq \max\{\|u\|_\mathcal{H}^p, \|u\|_\mathcal{H}^q\},\quad \forall~ u \in L^\mathcal{H}(\Omega). 
\end{align}
For any $\mathcal{N}$-function of the type $\varphi(x,t) = t^\alpha + \psi(x)t^\beta$ with $1 < \alpha < \beta$, there holds
\begin{align}\label{eq3.3.3}
\min\{\|u\|_\varphi^\alpha, \|u\|_\varphi^\beta\} \leq \int_\Omega (|u|^\alpha + \psi(x)|u|^\beta) \, \mathrm{d}x \leq \max\{\|u\|_\varphi^\alpha, \|u\|_\varphi^\beta\},\quad \forall~ u \in L^\varphi(\Omega). 
\end{align}

\begin{definition}\label{definitionsobolev}
With $\mathcal H$ as defined in \eqref{def3.3.2}, the Musielak--Orlicz--Sobolev space
associated with the double phase function $\mathcal H$ is defined as the particular case of
$W^{m,\varphi}(\Omega)$ corresponding to $\varphi=\mathcal H$.
Moreover, $W_0^{m,\mathcal H}(\Omega)$ denotes the closure of
$C_0^\infty(\Omega)$ in $W^{m,\mathcal H}(\Omega)$ with respect to the norm
$\|\cdot\|_{m,\mathcal H}$.
\end{definition}

\begin{remark}
Since $\mathcal H$ satisfies the required structural assumptions, the space
    $W^{m,\mathcal H}(\Omega)$ is a reflexive Banach space. Consequently,
    $W_0^{m,\mathcal H}(\Omega)$ is also reflexive, being a closed subspace of
    $W^{m,\mathcal H}(\Omega)$.
\end{remark}

The following inequality is known as the Poincar\'{e} inequality. For the detailed proof, we refer to Colasuonno-Squassina \cite[Proposition 2.18]{Colasuonno-Squassina-2016} and Adams \cite{Adams-2003}.
\begin{theorem}
If domain $\Omega\subset \mathbb{R}^N$ has a finite width, then there exists a constant $k=k(p)>0$ such that 
\begin{equation}\label{poincare}
    \|\phi\|_\mathcal{H}\leq k\|\nabla\phi\|_{\mathcal{H}},~\forall~\phi\in C_0^{\infty}(\Omega).
\end{equation}
 \end{theorem}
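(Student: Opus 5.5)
The statement is the Poincar\'e inequality $\|\phi\|_{\mathcal H}\le k\|\nabla\phi\|_{\mathcal H}$ for $\phi\in C_0^\infty(\Omega)$, with $\Omega$ of finite width. The plan is to reduce it to a one-dimensional estimate along the direction in which $\Omega$ is thin, and then transfer that estimate to the Luxemburg norm through the modular. Because both $\|\cdot\|_{\mathcal H}$ and the target inequality are $1$-homogeneous, it suffices to prove a modular version: there is $k\ge 1$ such that
\[
\rho_{\mathcal H}(\phi)\le 1 \quad\text{whenever}\quad \rho_{\mathcal H}(k\nabla\phi)\le 1,
\]
for every $\phi\in C_0^\infty(\Omega)$. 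Granting this, we apply it to $\phi/\gamma$ with $\gamma=k\|\nabla\phi\|_{\mathcal H}$. By the unit ball property (Proposition~\ref{eq3.3.1}) this gives $\rho_{\mathcal H}(k\nabla\phi/\gamma)\le 1$, hence $\rho_{\mathcal H}(\phi/\gamma)\le1$, that is, $\|\phi\|_{\mathcal H}\le\gamma$.

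For the modular estimate, first rotate coordinates so that $\Omega$ lies in the slab $\{0<x_1<d\}$; this is what finite width provides. Extend $\phi$ by zero and fix the transversal variables $x'$. Since $\phi(0,x')=0$, we have $|\phi(x_1,x')|\le\int_0^{x_1}|\partial_1\phi(t,x')|\,dt$. We apply Jensen's inequality to the convex function $\mathcal H(x,\cdot)$ with the normalized measure $dt/x_1$:
\[
\mathcal H\bigl(x,|\phi(x)|\bigr)\le \frac{1}{x_1}\int_0^{x_1}\mathcal H\bigl(x,\,x_1|\partial_1\phi(t,x')|\bigr)\,dt\le \frac1{x_1}\int_0^{d}\mathcal H\bigl(x,\,d|\nabla\phi(t,x')|\bigr)\,dt .
\]
The main obstacle appears here. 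The weight on the right is still evaluated at $x=(x_1,x')$, not at $(t,x')$, and $a$ genuinely varies along the segment. Integrating in $x_1$ also produces a logarithmic factor $\int_0^d dx_1/x_1$, which diverges.

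I would remove the singular factor by a slightly different one-dimensional argument. Split $|\phi|^p+a|\phi|^q$ and treat each power with the classical $L^r$ Poincar\'e inequality on segments, namely $\int_0^d|\phi|^r\,dx_1\le d^r\int_0^d|\partial_1\phi|^r\,dx_1$, which holds for $r\in\{p,q\}$. For the $p$-part this gives $\int|\phi|^p\le d^p\int|\nabla\phi|^p$ immediately. For the weighted $q$-part, I would try to compare $a(x_1,x')$ with $a(t,x')$ using $a\in L^\infty$ and the H\"older continuity of $a$. If that comparison fails, I would fall back on the cruder bound $a\le\|a\|_\infty$, which only yields control of $\int a|\phi|^q$ by $\|a\|_\infty d^q\int|\nabla\phi|^q$. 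The unweighted $\int|\nabla\phi|^q$ is not controlled by the modular of $\nabla\phi$, so this fallback is incomplete.

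The honest route, and the one I would commit to, is to cite the statement as the paper does: \cite[Proposition 2.18]{Colasuonno-Squassina-2016}, which proves exactly this inequality for the double phase $\mathcal H$ under the standing hypotheses on $a$ and $q/p$. Those hypotheses guarantee density of smooth functions and the needed regularity, and I would verify them against our hypotheses on $p$, $q$ and $a$. I expect the step controlling the weighted term $a|\phi|^q$ to be the main obstacle.
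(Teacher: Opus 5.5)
Your final route is to cite \cite[Proposition 2.18]{Colasuonno-Squassina-2016}. That is exactly what the paper does, since it gives no argument of its own, and your reduction to a modular statement via the unit ball property is correct. The problem is the hypothesis check you defer: it would not go through. As I recall, that proposition is stated under Colasuonno--Squassina's standing assumptions $q/p<1+\frac1N$ and $a$ Lipschitz on $\overline{\Omega}$. Here one only has $(N-1)q\le Np$, i.e.\ $q/p\le 1+\frac{1}{N-1}$, and H\"older continuous $a$. So for $1+\frac1N\le q/p\le \frac{N}{N-1}$, or for non-Lipschitz $a$, the citation does not literally cover the statement; the same caveat applies to the paper. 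Your direct attempt, meanwhile, stops exactly where an extra idea is needed.

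The missing idea is to control the weighted term through the $p$-part of the gradient. You do not need $\int|\nabla\phi|^q$, and you do not need to compare values of $a$ along segments. Since $p>1$, the assumption $(N-1)q\le Np$ gives $q\le \frac{Np}{N-1}<\frac{Np}{N-p}$. On the bounded set $\Omega$, the Sobolev and H\"older inequalities then give constants $C_1,C_2$ with $\|\phi\|_{L^p}\le C_1\|\nabla\phi\|_{L^p}$ and $\|\phi\|_{L^q}\le C_2\|\nabla\phi\|_{L^p}$ for all $\phi\in C_0^\infty(\Omega)$. For an unbounded domain of finite width, the same bounds follow by interpolating between $L^p$ and $L^{Np/(N-p)}$ and using the $L^p$ Poincar\'e inequality. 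Also, $t^p\le\mathcal{H}(x,t)$ gives $\|\nabla\phi\|_{L^p}\le\|\nabla\phi\|_{\mathcal{H}}$. Hence, with $\gamma=k\|\nabla\phi\|_{\mathcal{H}}$,
\[
\rho_{\mathcal{H}}\Bigl(\frac{\phi}{\gamma}\Bigr)\le \frac{\|\phi\|_{L^p}^p}{\gamma^p}+\|a\|_\infty\frac{\|\phi\|_{L^q}^q}{\gamma^q}\le \Bigl(\frac{C_1}{k}\Bigr)^p+\|a\|_\infty\Bigl(\frac{C_2}{k}\Bigr)^q\le 1
\]
for $k$ large, which is precisely your modular claim. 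This uses only $0\le a\in L^\infty(\Omega)$. The constant depends on $p,q,N,\Omega$ and $\|a\|_\infty$. The dependence on $\|a\|_\infty$ is unavoidable: replace $a$ by $\lambda a$ with $a$ vanishing where $\nabla\phi\neq0$ and let $\lambda\to\infty$. So $k=k(p)$ in the statement is too optimistic.

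As a side remark, the logarithmic divergence in your Jensen step is spurious. Convexity together with $\mathcal{H}(x,0)=0$ gives $\mathcal{H}(x,x_1 s)\le \frac{x_1}{d}\mathcal{H}(x,ds)$, which removes the factor $1/x_1$. The genuine obstruction is the one you identified, the weight being evaluated at the wrong point, and the argument above avoids it.
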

\begin{corollary}\label{poincarecoro}
    If $\Omega$ has finite width, then $\|\nabla^m\phi\|_{\mathcal{H}}$ is a norm on $W_0^{m,\mathcal{H}}(\Omega)$
 which is equivalent to the standard norm $\|\phi\|_{m,\mathcal{H}}$.  
\end{corollary}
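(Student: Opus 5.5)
The plan is to iterate the Poincaré inequality \eqref{poincare} down the chain of intermediate derivatives and then compare the resulting norm with the full norm $\|\cdot\|_{m,\mathcal H}$. Since both quantities are continuous with respect to $\|\cdot\|_{m,\mathcal H}$ and $C_0^\infty(\Omega)$ is dense in $W_0^{m,\mathcal H}(\Omega)$ by definition, it suffices to prove the two-sided estimate
\[
\|\nabla^m\phi\|_{\mathcal H}\le \|\phi\|_{m,\mathcal H}\le C\,\|\nabla^m\phi\|_{\mathcal H}\qquad\text{for all }\phi\in C_0^\infty(\Omega),
\]
and then extend it by density. The left inequality is immediate from the definition of $\|\cdot\|_{m,\mathcal H}$. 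Once the equivalence holds, $\|\nabla^m\cdot\|_{\mathcal H}$ is a seminorm dominating a norm, hence positive definite, so it is a norm.

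For the right inequality, I would show $\|\nabla^{k}\phi\|_{\mathcal H}\le C_k\|\nabla^{k+1}\phi\|_{\mathcal H}$ for $k=0,\dots,m-1$ and then telescope. The parity convention for $\nabla^j$ splits this into two cases.
\begin{itemize}
\item If $k$ is even, then $\nabla^k\phi=\Delta^{k/2}\phi$ and $\nabla^{k+1}\phi=\nabla\Delta^{k/2}\phi$. Since $\Delta^{k/2}\phi\in C_0^\infty(\Omega)$, applying \eqref{poincare} to it gives the estimate directly.
\item If $k$ is odd, then $\nabla^k\phi=\nabla\psi$ and $\nabla^{k+1}\phi=\Delta\psi$, where $\psi=\Delta^{(k-1)/2}\phi\in C_0^\infty(\Omega)$. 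Here we need a Calderón--Zygmund type bound $\|\nabla\psi\|_{\mathcal H}\le C\|\Delta\psi\|_{\mathcal H}$ for compactly supported smooth $\psi$.
\end{itemize}
Summing the resulting bounds gives $\|\phi\|_{m,\mathcal H}\le(1+\sum_k\prod_{j\ge k}C_j)\|\nabla^m\phi\|_{\mathcal H}$.

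The main obstacle is the odd step, bounding $\|\nabla\psi\|_{\mathcal H}$ by $\|\Delta\psi\|_{\mathcal H}$, because \eqref{poincare} only controls a function by its gradient. I would first use \eqref{poincare} twice, componentwise, to get $\|\nabla\psi\|_{\mathcal H}\lesssim\|\nabla^2\psi\|_{\mathcal H}$, where $\nabla^2\psi$ here means the full Hessian $D^2\psi$. It then remains to show $\|D^2\psi\|_{\mathcal H}\lesssim\|\Delta\psi\|_{\mathcal H}$. On $\mathbb R^N$ this is the boundedness of the Riesz transforms $\partial_i\partial_j\Delta^{-1}$ on $L^{\mathcal H}$. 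That boundedness holds because $\mathcal H$ is of double phase type with $a$ Hölder continuous and $q/p\le 1+1/N$, as guaranteed by the standing assumption $(N-1)q\le Np$; under these conditions the maximal function and Calderón--Zygmund operators are bounded on $L^{\mathcal H}$ (see \cite{Diening-Harjulehto-Hasto-Ruzicka-2011}). If one prefers to avoid this, an alternative is to read the corollary as the paper's convention for the norm, using $\|\nabla^m\cdot\|_{\mathcal H}$ together with the $L^p$-Poincaré and $L^p$-elliptic estimates on the $p$-part. The modular inequality \eqref{eq3.3.2} then transfers these estimates to the Luxemburg norm up to the exponents $p,q$, which gives equivalence of the norms on bounded sets.
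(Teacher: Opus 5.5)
\textbf{Verdict: genuine gap.} Your proof rests on a false implication at its decisive step.

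Your skeleton (telescoping on $C_0^\infty(\Omega)$, then density) is the paper's. You also catch something the paper's one-line ``successive iteration'' skips: with $\nabla^{2i}=\Delta^{i}$ and $\nabla^{2i+1}=\nabla\Delta^{i}$, the odd steps need $\|\nabla\psi\|_{\mathcal H}\le C\|\Delta\psi\|_{\mathcal H}$ for $\psi\in C_0^\infty(\Omega)$, and \eqref{poincare} does not give this.

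The gap is in how you close that step.
\begin{itemize}
\item The assumption $(N-1)q\le Np$ means $q/p\le \frac{N}{N-1}=1+\frac{1}{N-1}$. This does \emph{not} imply $q/p\le 1+\frac1N$ (e.g.\ $N=4$, $q/p=1.3$).
\item Even $q/p\le 1+\frac1N$ is not the relevant hypothesis when $a$ is only $C^{0,\alpha}$. Boundedness of the maximal operator on $L^{\mathcal H}$, and of Calder\'on--Zygmund operators via extrapolation, rests on condition (A1). For $t^p+a(x)t^q$ this is equivalent to $a\in C^{0,\frac{N}{p}(q-p)}$, i.e.\ $q/p\le 1+\frac{\alpha}{N}$ for $a\in C^{0,\alpha}$. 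The relevant results are in Harjulehto--H\"ast\"o and Cruz-Uribe--H\"ast\"o, not the variable-exponent monograph you cite.
\item Without (A1), $M$ is in general unbounded on $L^{\mathcal H}$. Take $a$ vanishing on a half-space and growing like $x_1^\alpha$ off it, with $\alpha<\frac{N}{p}(q-p)$.
\end{itemize}
So your argument does not establish the decisive estimate. Your fallback does not help either. $L^p$ Poincar\'e and elliptic estimates say nothing about $\int_\Omega a(x)|\nabla\psi|^q\,\mathrm{d}x$, and \eqref{eq3.3.2} only compares norm and modular of one and the same function.

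The step can in fact be closed without singular integrals, using $(N-1)q\le Np$ in a different way.
\begin{itemize}
\item For $\psi\in C_0^\infty(\Omega)$ one has $\nabla\psi=\nabla\Gamma*\Delta\psi$ with $|\nabla\Gamma(z)|=c_N|z|^{1-N}$. Hence $|\nabla\psi|\le c_N\,K*|\Delta\psi|$ on $\Omega$, where $K(z)=|z|^{1-N}\chi_{\{|z|\le d\}}$ and $d=\mathrm{diam}\,\Omega$.
\item $K\in L^s(\mathbb R^N)$ for all $1\le s<\frac{N}{N-1}$. Since $(N-1)q\le Np$ gives $\frac1p-\frac1q\le\frac{1}{Np}<\frac1N$, the exponent $\frac1s=1+\frac1q-\frac1p$ is admissible.
\item Young's inequality then yields $\|\nabla\psi\|_{L^p(\Omega)}+\|\nabla\psi\|_{L^q(\Omega)}\le C\|\Delta\psi\|_{L^p(\Omega)}\le C\|\Delta\psi\|_{\mathcal H}$.
\item Hence $\|\Delta\psi\|_{\mathcal H}=1$ implies $\rho_{\mathcal H}(\nabla\psi)\le C^p+\|a\|_\infty C^q=:C_0$, so $\|\nabla\psi\|_{\mathcal H}\le\max\{1,C_0^{1/p}\}$.
\end{itemize}
Homogeneity then gives the odd-step inequality using only $a\in L^\infty$ and $\Omega$ bounded. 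Alternatively, your Riesz-transform route goes through if you invoke (A1). That is presumably what the Section~3 hypothesis $a\in C^{0,\frac{N}{p(q-p)}}$ is meant to provide, read as $C^{0,\frac{N}{p}(q-p)}$.
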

\begin{proof}
    If $\phi\in C_0^{\infty}(\Omega)$, then any derivative of $\phi$ also belong to $C_0^{\infty}(\Omega)$. Now, by using  \eqref{poincare}, we have
$$ \|\nabla\phi\|_{\mathcal{H}}\leq\|\phi\|_{1,\mathcal{H}}=\|\nabla\phi\|_{\mathcal{H}}+\|\phi\|_{\mathcal{H}}\leq (1+k)\|\nabla\phi\|_{\mathcal{H}},$$
 for some suitable constant $k=k(p)>0$. Notice that successive iteration of this inequality applied to derivatives $\nabla^\gamma\phi$ for all $\gamma\leq m-1$ gives 
$$\|\nabla^m\phi\|_{\mathcal{H}}\leq\|\phi\|_{m,\mathcal{H}}\leq k_1\|\nabla^m\phi\|_{\mathcal{H}},~\phi\in C_0^{\infty}(\Omega),$$
for some suitable constant $k_1=k_1(p)>0$. On exploiting the density argument of $C_0^{\infty}(\Omega)$ in  $W_0^{m,\mathcal{H}}(\Omega)$, the result immediately follows.
\end{proof}
By Corollary \ref{poincarecoro}, one can easily define an equivalent norm $\|u\| = \|\nabla^m u\|_\mathcal{H}$ for the space $W_0^{m,\mathcal{H}}(\Omega)$. Since reflexivity is preserved under equivalent norm and thus, we deduce that $(W_0^{m,\mathcal{H}}(\Omega),\|\cdot\|)$ is again a reflexive Banach space. Now, by applying \eqref{eq3.3.2}, the following holds
\begin{align}\label{eq3.3.4}
{\min\{\|u\|^p, \|u\|^q\} \leq \int_\Omega (|\nabla^m u|^p + a(x)|\nabla^m u|^q) \, \mathrm{d}x \leq \max\{\|u\|^p, \|u\|^q\} },~\forall ~u \in W_0^{m,\mathcal{H}}(\Omega).
\end{align}
The following embeddings can be directly obtained from Colasuonno-Squassina \cite[Proposition 2.15]{Colasuonno-Squassina-2016}. For simplicity, we provide here a short proof.
\begin{proposition}\label{pro3.3.9}
Assume that {$1<p<q<\frac{N}{m}$} and $0\le a(\cdot)\in L^\infty(\Omega)$.  Define the weighted Lebesgue space as follows
\[
L^q_a(\Omega)
= \Bigl\{u:\Omega\to\mathbb{R} \text{ measurable}\colon\int_\Omega a(x)\,\lvert u\rvert^q\,\mathrm{d}x<+\infty\Bigr\}
\]
with seminorm
\[
\lvert u\rvert_{q,a}
=\biggl(\int_\Omega a(x)\,\lvert u\rvert^q\,\mathrm{d}x\biggr)^{1/q},
\]
which turns into a norm if $a(x)>0$ for a.a. $x\in \Omega$. Then the following results hold:
\begin{enumerate}
  \item[\normalfont{(a)}] The embeddings $L^\mathcal{H}(\Omega)\hookrightarrow L^r(\Omega)$ and $W^{m,\mathcal{H}}_0(\Omega)\hookrightarrow W^{m,r}_0(\Omega)$ for all $r\in[1,p]$ are continuous.
  \item[\normalfont{(b)}] The embedding $W^{m,\mathcal{H}}_0(\Omega)\hookrightarrow L^r(\Omega)$ is continuous for $r\in[1,p^*]$ and compact for $r\in[1,p^*)$.
  \item[\normalfont{(c)}] The chain of continuous embeddings $L^q(\Omega)\hookrightarrow L^\mathcal{H}(\Omega)\hookrightarrow L^q_a(\Omega)$ hold.
\end{enumerate}
\end{proposition}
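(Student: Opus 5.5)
The plan is to prove the three items separately, using only the definition of $\mathcal H$, the modular inequality \eqref{eq3.3.4}, Corollary \ref{poincarecoro}, and classical higher-order Sobolev theory on $W^{m,p}_0(\Omega)$.

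For (a), fix $r\in[1,p]$. Since $\mathcal H(x,t)\ge t^p$ and $\Omega$ is bounded, Hölder gives
\[
\|u\|_{L^r}\le |\Omega|^{\frac1r-\frac1p}\|u\|_{L^p}.
\]
Moreover, $\rho_{\mathcal H}(u/\gamma)\le1$ forces $\int_\Omega |u/\gamma|^p\,\mathrm{d}x\le 1$, so $\|u\|_{L^p}\le \|u\|_{\mathcal H}$. Hence $L^{\mathcal H}(\Omega)\hookrightarrow L^r(\Omega)$ with constant $\max\{1,|\Omega|\}$. Applying this to $\nabla^k u$ for $k=0,\dots,m$ gives $\|u\|_{W^{m,r}}\le C\|u\|_{m,\mathcal H}$. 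Since $C_0^\infty(\Omega)$ is dense in $W_0^{m,\mathcal H}(\Omega)$, every element of $W_0^{m,\mathcal H}(\Omega)$ is a limit of smooth compactly supported functions in the $W^{m,r}$ norm, so it lies in $W_0^{m,r}(\Omega)$.

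For (b), I first use (a) with $r=p$ to get $W_0^{m,\mathcal H}(\Omega)\hookrightarrow W_0^{m,p}(\Omega)$ continuously. I then compose with the classical Sobolev embedding $W_0^{m,p}(\Omega)\hookrightarrow L^{p^*}(\Omega)$, which holds since $mp<N$. For functions with zero traces this can be obtained by iterating the first-order Sobolev inequality $m$ times, so no boundary regularity is needed. The Rellich--Kondrachov theorem for zero-trace spaces gives compactness of $W_0^{m,p}(\Omega)\hookrightarrow L^{r}(\Omega)$ for $r<p^*$. A continuous map followed by a compact one is compact, so (b) follows for all $r\in[1,p^*)$. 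Continuity for $r\le p^*$ follows from boundedness of $\Omega$.

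For (c), the second inclusion is immediate: $\int a|u|^q\le\rho_{\mathcal H}(u)$. Combined with \eqref{eq3.3.2}, this gives
\[
|u|_{q,a}\le \max\{\|u\|_{\mathcal H}^{p/q},\|u\|_{\mathcal H}\}.
\]
By homogeneity of the seminorm, normalizing to $\|u\|_{\mathcal H}=1$ yields $|u|_{q,a}\le\|u\|_{\mathcal H}$. For the first inclusion I use $\mathcal H(x,t)\le t^p+\|a\|_\infty t^q\le C(1+t^q)$. This gives $\mathcal H\preceq t^q$ with $h\equiv C\in L^1(\Omega)$, which is integrable because $\Omega$ is bounded. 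Proposition \ref{pro3.3.6} then gives $L^q(\Omega)\hookrightarrow L^{\mathcal H}(\Omega)$. Alternatively, a direct estimate works: if $\|u\|_{L^q}=1$, then $\rho_{\mathcal H}(u/\gamma)\le \gamma^{-p}|\Omega|^{1-p/q}+\|a\|_\infty\gamma^{-q}$, which is at most $1$ for a suitable fixed $\gamma$.

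The only mildly delicate point is making sure the higher-order Sobolev and compactness statements apply to $W_0^{m,p}(\Omega)$ without regularity of $\partial\Omega$. I would handle this through zero extension to $\mathbb R^N$ and iteration of the first-order results.
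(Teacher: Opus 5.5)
Your proposal is correct and follows the paper's proof essentially step for step. Parts (a) and (b) go through $W_0^{m,\mathcal H}(\Omega)\hookrightarrow W_0^{m,p}(\Omega)$ followed by the classical Sobolev and Rellich--Kondrachov embeddings. Part (c) uses the same normalization argument for $|u|_{q,a}\le\|u\|_{\mathcal H}$ and the same domination $\mathcal H(x,t)\le C(1+t^q)$ combined with Proposition \ref{pro3.3.6}. The only differences are that you check $\|u\|_{L^p}\le\|u\|_{\mathcal H}$ directly from the Luxemburg norm instead of citing $t^p\preceq\mathcal H$, and that you spell out the density and zero-extension details, which the paper leaves implicit.
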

\begin{proof}
    Define $\mathcal{H}_p(x, t) = t^p$ for a.a. $x \in \Omega$ and for all $t\geq 0$. By Proposition \eqref{pro3.3.6}, we obtain that the embeddings $L^\mathcal{H}(\Omega) \hookrightarrow L^p(\Omega)$ and $W_0^{m,\mathcal{H}}(\Omega) \hookrightarrow W_0^{m,p}(\Omega)$ are continuous, thanks to $\mathcal{H}_p \preceq \mathcal{H}$ and thus, the statement of (a) follows. The proof of statement of (b) trivially holds due to embedding of $W_0^{m,p}(\Omega)$ and omitted here. Finally, observe that
\[
\int_{\Omega} a(x) |u|^q \mathrm{d}x \leq \int_{\Omega} (|u|^p + a(x)|u|^q) \mathrm{d}x = \rho_\mathcal{H}(u)
\]
holds for all $u \in L^\mathcal{H}(\Omega)$. Moreover, for $u \neq 0$, we have
\[
\int_{\Omega} a(x) \left| \frac{u}{\|u\|_\mathcal{H}} \right|^q \mathrm{d}x \leq 1 \iff \lvert u\rvert_{q,a} \leq \|u\|_\mathcal{H}.
\]
By the hypothesis, we deduce that
\[
\mathcal{H}(x, t)\leq 1 + (1 + \|a\|_{\infty}) t^q, \quad\text{for a.a.}~ x \in \Omega ~\text{and for all}~ t\geq 0.
\]
The above inequality with Proposition \eqref{pro3.3.6} implies the validity of statement of (c). This ends the proof. 
\end{proof}
\begin{definition}\label{def4.8}
For each $x\in\Omega$, we define the Sobolev conjugate $\mathcal{H}_*$ of $\mathcal{H}$ by
\[
\mathcal{H}_*^{-1}(x, s) = \int_0^s \frac{\mathcal{H}^{-1}(x, \tau)}{\tau^{1 + 1/N}}  d\tau.
\]
Similarly, the $m$-th Sobolev conjugate $\mathcal{H}_{*,m}$ with $m \geq 1$ is defined recursively by
\begin{equation}
    \begin{cases}
      \mathcal{H}_{*,0}(x, t) = \mathcal{H}(x, t),\\
      \mathcal{H}_{*,k+1}^{-1}(x, s) = \int_0^s \frac{\mathcal{H}_{*,k}^{-1}(x, \tau)}{\tau^{1 + 1/N}}  d\tau \quad \text{for all }~ k \in\{ 0, 1, \dots, m-1\}.
    \end{cases}
\end{equation}
\end{definition}
Let $\Omega \subset \mathbb{R}^N$ be a smooth bounded domain with Lipschitz boundary $\partial\Omega$ and let $\mathcal{H}: \overline{\Omega} \times [0, \infty) \to [0, \infty)$ be a Musielak-Orlicz function satisfying the hypotheses for all $k\in\{ 1,2, \dots, m\}$ as follows:
\begin{itemize}
\item[\normalfont{(P1)}] There holds $\mathcal{H}\in\Phi(\Omega)$.

\item[\normalfont{(P2)}]
$\mathcal{H}$ is continuous on $\overline{\Omega} \times [0, \infty)$ and $\mathcal{H}(x, t) \in (0, \infty)$ for a.a. $x \in \overline{\Omega}$ and $t > 0$.

\item[\normalfont{(P3)}]
$\mathcal{H}(x, t) = \mathcal{H}(x, 1)t$ for a.a. $x \in \overline{\Omega}$ and $t \in [0, 1)$.

\item[\normalfont{(P4)}]
For each $k$, the function $T_k(x) = \lim_{s \to \infty} \mathcal{H}_{*,k}^{-1}(x, s)$ is continuous on $\overline{\Omega}$ and $T_k\in C^{1,0}(\text{Dom}(T_k))$, where $C^{1,0}(\text{Dom}(T_k))$ is the set of all locally Lipschitz continuous functions on $\text{Dom}(T_k)$.

\item[\normalfont{(P5)}]
For each $k$, $\mathcal{H}_{*,k} \in C^{1,0}(\text{Dom}(\mathcal{H}_{*,k}))$ and there exist constants $0<\delta_0 < \frac{1}{N}$, $C_0 >0$, and $0<t_0 < \min_{x \in \overline{\Omega}} T_k(x)$ such that for all $x \in \Omega$ and $t \in [t_0, T_k(x))$, there holds 
\[
\left| \frac{\partial \mathcal{H}_{*,k}(x, t)}{\partial x_j} \right| \leq C_0 (\mathcal{H}_{*,k}(x, t))^{1 + \delta_0} \quad \text{for all}\quad j\in \{ 1, \dots, N\},
\]
provided that $\frac{\partial \mathcal{H}_{*,k}(x, t)}{\partial x_j}$ exists for each $j\in \{ 1, \dots, N\}$.
\end{itemize}
\begin{theorem}
Assume that hypotheses \textnormal{(P1)-(P5)} hold. Then the embedding described below
\[
W^{m,\mathcal{H}}(\Omega) \hookrightarrow L^{\mathcal{H}_{*,m}}(\Omega)
\]
is continuous. Moreover, if there holds $B \ll \mathcal{H}_{*,m}$, the embedding $W^{m,\mathcal{H}}(\Omega) \hookrightarrow L^B(\Omega)$ is compact, where $B$ is a Musielak-Orlicz function. In particular, the embedding $W^{m,\mathcal{H}}(\Omega)\hookrightarrow L^\mathcal{H}(\Omega)$ is compact. 
\end{theorem}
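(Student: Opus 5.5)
The continuous embedding is an $m$-fold iteration of the first-order Musielak--Orlicz--Sobolev embedding; I would then deduce compactness from an interpolation-type argument.

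\textbf{Base case ($m=1$).} Under \textnormal{(P1)--(P5)} with $k=1$, the embedding $W^{1,\mathcal{H}}(\Omega)\hookrightarrow L^{\mathcal{H}_*}(\Omega)$ is continuous. This is the known first-order result of Fan type: one proves $\|u\|_{\mathcal{H}_*}\le C\|u\|_{1,\mathcal{H}}$ first for bounded smooth $u$. The argument applies the $L^{1}$ Sobolev inequality $\|v\|_{N/(N-1)}\le C\|v\|_{W^{1,1}}$ to $v=\psi(x,|u|/\gamma)$, where $\psi$ is chosen so that $\psi^{N/(N-1)}$ is comparable to $\mathcal{H}_*$. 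The gradient of $v$ contains two kinds of terms.
\begin{itemize}
\item The term $\partial_t\psi\,|\nabla u|$ is handled by Young's inequality with the pair $(\mathcal{H},\mathcal{H}^*)$, using the defining relation $(\mathcal{H}_*^{-1})'(s)=\mathcal{H}^{-1}(s)/s^{1+1/N}$.
\item The $x$-derivative term $\partial_x\psi$ is controlled by \textnormal{(P5)}: the bound with exponent $1+\delta_0<1+1/N$ lets this contribution be absorbed into the left-hand side.
\end{itemize}
Hypothesis \textnormal{(P3)} (linearity near $0$) and \textnormal{(P4)} (continuity of $T_1$) are used to cut off near $t=0$ and near the possible finite blow-up level $T_1(x)$. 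The bound then extends to general $u$ by density and truncation.

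\textbf{Iteration.} Apply the base case with $\mathcal{H}$ replaced by $\mathcal{H}_{*,k}$, for $k=0,\dots,m-1$. Hypotheses \textnormal{(P1)--(P5)} are assumed for every $k$, which is exactly what the recursive definition of $\mathcal{H}_{*,k+1}$ as the conjugate of $\mathcal{H}_{*,k}$ requires. If $u\in W^{m,\mathcal{H}}(\Omega)$, then $\nabla^{j}u\in W^{1,\mathcal{H}}$ for $j\le m-1$, so $\nabla^{m-1}u\in L^{\mathcal{H}_{*,1}}$ with norm bounded by $\|u\|_{m,\mathcal{H}}$. By induction, $\nabla^{m-k}u\in W^{1,\mathcal{H}_{*,k-1}}$ and hence $\nabla^{m-k}u\in L^{\mathcal{H}_{*,k}}$. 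Taking $k=m$ gives $u\in L^{\mathcal{H}_{*,m}}$ with the norm estimate. One subtlety is that $\nabla^j$ is defined through Laplacians at even orders. I would therefore run the induction on the full tensor of derivatives of order $j$, and use that $W^{m,\mathcal{H}}$ controls it. If the paper's norm does not give this directly, I would argue componentwise for the needed derivatives.

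\textbf{Compactness.} Let $(u_n)$ be bounded in $W^{m,\mathcal{H}}$. It is bounded in $W^{m,1}(\Omega)$, because $\mathcal{H}\ge$ a multiple of $t$ minus a constant on a bounded domain. By Rellich--Kondrachov, a subsequence converges in $L^1$ and a.e. For $B\ll\mathcal{H}_{*,m}$, given $\varepsilon>0$, growth comparison gives a $K$ with $B(x,t)\le \varepsilon\,\mathcal{H}_{*,m}(x,t/\lambda)+C_\varepsilon t$ uniformly in $x$. Applying this to $(u_n-u_l)/\lambda$ and using the uniform bound on $\rho_{\mathcal{H}_{*,m}}$ from the continuous embedding, $\rho_B((u_n-u_l)/\lambda)\le\varepsilon C+C_\varepsilon\|u_n-u_l\|_1$. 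Hence the sequence is Cauchy in $L^B$ for every $\lambda$, i.e. Cauchy in norm. The particular case $B=\mathcal{H}$ follows because $\mathcal{H}\ll\mathcal{H}_{*,1}\preceq\mathcal{H}_{*,m}$.

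The main obstacle is the base case: controlling the $x$-dependence term via \textnormal{(P5)} and the absorption step near $T_k(x)$. I would follow Fan's proof for $W^{1,\varphi}$ closely rather than redo it.
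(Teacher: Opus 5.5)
Your proof of the continuous embedding is the paper's. Both iterate Fan's first-order embedding with $\mathcal{H}$ replaced by $\mathcal{H}_{*,k}$, and both simply cite Fan for the base case. Your top-down climb is the paper's induction ``$u,\nabla u\in W^{m,\mathcal H}\Rightarrow u\in W^{1,\mathcal{H}_{*,m}}$'' read in the other direction. It works provided stage $k$ asserts $\nabla^j u\in L^{\mathcal{H}_{*,k}}$ for all $j\le m-k$, since $\nabla^{m-k}u\in W^{1,\mathcal{H}_{*,k-1}}$ needs $\nabla^{m-k}u$ itself in $L^{\mathcal{H}_{*,k-1}}$.

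The real difference is compactness.
\begin{itemize}
\item The paper writes $W^{m,\mathcal H}\hookrightarrow W^{1,\mathcal{H}_{*,m-1}}$ continuously. It then invokes Fan's compact embedding $W^{1,\mathcal{H}_{*,m-1}}\hookrightarrow L^B$ for $B\ll(\mathcal{H}_{*,m-1})_*=\mathcal{H}_{*,m}$.
\item You argue directly from $L^1$-compactness (Rellich in $W^{1,1}$), the uniform $L^{\mathcal{H}_{*,m}}$ bound, and the definition of $\ll$.
\end{itemize}
Your route is self-contained once continuity is known, and it avoids checking the hypotheses of a compactness theorem for the auxiliary space $W^{1,\mathcal{H}_{*,m-1}}$. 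The paper's route is shorter, by citation.

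When you write it out, fix the scaling. Take the constant in $\ll$ to be $k=M/\lambda$, with $M\ge\sup_{n,l}\|u_n-u_l\|_{\mathcal{H}_{*,m}}$. Then $B(x,s/\lambda)\le\varepsilon\,\mathcal{H}_{*,m}(x,s/M)+C_{\varepsilon,\lambda}\,s$, and hence $\rho_B\bigl((u_n-u_l)/\lambda\bigr)\le\varepsilon+C_{\varepsilon,\lambda}\|u_n-u_l\|_{1}$. As you wrote it, $\mathcal{H}_{*,m}$ ends up evaluated at $|u_n-u_l|/\lambda^2$, whose modular is bounded only under a $\Delta_2$ condition.

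For the ``in particular'', compose $W^{m,\mathcal H}\hookrightarrow W^{1,\mathcal H}$ with your compactness argument for $m=1$ and $\mathcal{H}\ll\mathcal{H}_{*,1}$. This avoids having to justify $\mathcal{H}_{*,1}\preceq\mathcal{H}_{*,m}$ and the transfer of $\ll$ through it.

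The subtlety you raise about $\nabla^j$ is real, and the paper shares it. With $\nabla^j$ built from Laplacians, $\|\cdot\|_{m,\mathcal H}$ controls $\Delta^i u$ and $\nabla\Delta^i u$, but not the full Hessian of $\Delta^i u$. So for odd $j$ the step $\nabla^j u\in W^{1,\mathcal{H}_{*,k}}$ does not follow from the norm; the paper's step ``$\nabla u\in W^{m,\mathcal H}$'' has the same problem. Arguing componentwise does not repair it. You need either the space defined with all partial derivatives of order at most $m$, or a Calder\'on--Zygmund estimate in these Musielak--Orlicz spaces.
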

\begin{proof}
The proof of our arguments  rely on  Fan \cite[Theorems 1.1 and 1.2]{Fan-2012}, Colasuonno-Squassina \cite[Proposition 2.18]{Colasuonno-Squassina-2016} and Donaldson-Trudinger \cite[Theorem 3.9]{Donaldson-Trudinger-1971}. Here, we present an outline of the proof in the higher-order Sobolev setting. For $m = 1$, by using result obtained by Fan \cite[Theorem 1.1a]{Fan-2012}, there exists a constant $C_1 > 0$ such that 
\[
\|u\|_{\mathcal{H}_{*}} \leq C_1 \|u\|_{W^{1,\mathcal{H}}},~\forall~u \in W^{1,\mathcal{H}}(\Omega).
\]
Thus, the embedding $W^{1,\mathcal{H}}(\Omega) \hookrightarrow L^{\mathcal{H}_{*}}(\Omega)$ holds.
Further, by adapting the mathematical induction,  we assume that the embedding $W^{m,\mathcal{H}}(\Omega) \hookrightarrow L^{\mathcal{H}_{*,m}}(\Omega)$ with $m>1$ holds. Thus, there exists a constant $C_m > 0$ such that
\begin{equation}\label{1}
\|u\|_{\mathcal{H}_{*,m}} \leq C_m \|u\|_{W^{m,\mathcal{H}}},~\forall~u \in W^{m,\mathcal{H}}(\Omega). 
\end{equation}
Now, let $u \in W^{m+1,\mathcal{H}}(\Omega)$, then $u \in W^{m,\mathcal{H}}(\Omega)$ and $\nabla u \in W^{m,\mathcal{H}}(\Omega)$. By using \eqref{1} as the induction hypothesis, we have
\begin{equation}\label{2}
\|u\|_{\mathcal{H}_{*,m}} \leq C_m \|u\|_{W^{m,\mathcal{H}}} \leq C_m \|u\|_{W^{m+1,\mathcal{H}}} 
\end{equation}
and
\begin{equation}\label{3}
\|\nabla u\|_{\mathcal{H}_{*,m}} \leq C_m \|\nabla u\|_{W^{m,\mathcal{H}}} \leq C_m \|u\|_{W^{m+1,\mathcal{H}}}. 
\end{equation}
Since $u \in L^{\mathcal{H}_{*,m}}(\Omega)$ and $\nabla u \in L^{\mathcal{H}_{*,m}}(\Omega)$, it follows that $u \in W^{1,\mathcal{H}_{*,m}}(\Omega)$. Moreover, from \eqref{2} and \eqref{3}, we obtain
\begin{equation}\label{4}
\|u\|_{W^{1,\mathcal{H}_{*,m}}} = \|u\|_{\mathcal{H}_{*,m}} + \|\nabla u\|_{\mathcal{H}_{*,m}} \leq 2C_m  \|u\|_{W^{m+1,\mathcal{H}}}. 
\end{equation}
Now, since $\mathcal{H}_{*,m}$ satisfies (P1)--(P5), we can  apply \cite[Theorem 1.1(i)]{Fan-2012} to the space $W^{1,\mathcal{H}_{*,m}}(\Omega)$. Thus, there exists a constant $C_{*,m} > 0$ such that 
\begin{equation}\label{5}
\|v\|_{(\mathcal{H}_{*,m})_*} \leq C_{*,m} \|v\|_{W^{1,\mathcal{H}_{*,m}}},\quad\forall~v \in W^{1,\mathcal{H}_{*,m}}(\Omega) 
\end{equation}
From the definition of $(\mathcal{H}_{*,m})$ , we have $(\mathcal{H}_{*,m})_* = \mathcal{H}_{*,m+1}$. By applying \eqref{5} to $u$ and using \eqref{4}, one has
\begin{equation*}\label{6}
\|u\|_{\mathcal{H}_{*,m+1}} \leq C_{*,m} \|u\|_{W^{1,\mathcal{H}_{*,m}}} \leq 2C_{*,m} C_m  \|u\|_{W^{m+1,\mathcal{H}}}. 
\end{equation*}
This shows that the embedding $W^{m+1,\mathcal{H}}(\Omega) \hookrightarrow L^{\mathcal{H}_{*,m+1}}(\Omega)$ is continuous.
Hence, the embedding holds for all $m \geq 1$.
Now, suppose that $B$ is a Musielak-Orlicz function such that $B \ll \mathcal{H}_{*,m}$, that is, for all $k > 0$, we have
\[
\lim_{t \to \infty} \frac{B(x, k t)}{\mathcal{H}_{*,m}(x, t)} = 0 \quad \text{uniformly for } x \in \Omega.
\]
It follows that the embedding $W^{m,\mathcal{H}}(\Omega) \hookrightarrow L^B(\Omega)$ is compact.
Indeed, first note that from the previous argument given in \eqref{4}, we have the following continuous embedding
 \begin{equation}\label{9}
 W^{m,\mathcal{H}}(\Omega) \hookrightarrow W^{1,\mathcal{H}_{*,m-1}}(\Omega).
 \end{equation}
Now, since $B \ll \mathcal{H}_{*,m} = (\mathcal{H}_{*,m-1})_*$, by using the result of Fan \cite[Theorem 1.1(ii)]{Fan-2012}, again the embedding $W^{1,\mathcal{H}_{*,m-1}}(\Omega) \hookrightarrow L^B(\Omega)$ is compact. Combining this with  \eqref{9}, we obtain that the embedding $ W^{m,\mathcal{H}}(\Omega) \hookrightarrow  L^B(\Omega)$ is compact. This finishes the proof.
\end{proof}
\begin{proposition}\label{prop3.3.12}
Let $\mathcal{C}(x,t)=t^r + c(x)\,t^s$ for a.a. $x\in\Omega$ and $t\geq0$ be such that $1<r<p^*\le s<q^*$, $\quad c\in L^\infty(\Omega),$ and $c\ge0$ satisfying condition \eqref{3.1.3}. Then the embedding $W^{m,\mathcal{H}}_0(\Omega)\hookrightarrow L^\mathcal{C}(\Omega)$ is compact.
\end{proposition}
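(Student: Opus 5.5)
Let $(u_j)$ be a bounded sequence in $W^{m,\mathcal{H}}_0(\Omega)$. By reflexivity and Proposition \ref{pro3.3.9}(b), I would pass to a subsequence with $u_j\rightharpoonup u$ weakly, $u_j\to u$ in $L^t(\Omega)$ for every $t\in[1,p^*)$, and $u_j\to u$ a.e. in $\Omega$. Because $\mathcal{C}$ satisfies the $(\Delta_2)$-condition, convergence in $L^\mathcal{C}(\Omega)$ follows once the modular $\rho_\mathcal{C}(u_j-u)\to0$. So I will show
\[
\int_\Omega |u_j-u|^r\,\mathrm{d}x\to0\quad\text{and}\quad \int_\Omega c(x)|u_j-u|^s\,\mathrm{d}x\to0 .
\]
The first limit is immediate from the compact embedding, since $r<p^*$. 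Note also that on $\{a=0\}$ we have $c=0$ by \eqref{3.1.3}, so only the region where $a>0$ contributes to the second integral.

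For the weighted term I would rely on Vitali's convergence theorem, using uniform integrability of $c|v_j|^s$ with $v_j=u_j-u$. The first step is to bound $\int_\Omega a(x)^{s/q}|v_j|^{s}\,\mathrm{d}x$ uniformly and with a small excess exponent. A natural route is interpolation between the integrability given by the $p$-phase and by the $q$-phase. By \eqref{3.1.3}, $c|v|^s\le C\,(a^{1/q}|v|)^s$. I then write $s=\theta t+(1-\theta)q^*$ and estimate
\[
a^{s/q}|v|^s\le \|a\|_\infty^{\cdot}\,|v|^{\theta t}\,\bigl(a^{1/q}|v|\bigr)^{(1-\theta)q^*},
\]
choosing $t<p^*$ and $\theta\in(0,1)$ so that the exponents match. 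Hölder's inequality with conjugate exponents $1/\theta$ and $1/(1-\theta)$ gives
\[
\int c|v_j|^s\le C\Bigl(\int|v_j|^{t}\Bigr)^{\theta}\Bigl(\int a^{q^*/q}|v_j|^{q^*}\Bigr)^{1-\theta}.
\]
The first factor tends to $0$ by compactness, so it remains to show that the second factor is bounded.

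The key step, and the one I expect to be the main obstacle, is a weighted Sobolev estimate $\int_\Omega a^{q^*/q}|v|^{q^*}\le C\,\|v\|^{q^*}$ (or at least boundedness along the sequence). I would derive it by localizing with a partition of unity subordinate to small balls. On balls where $a\ge\delta>0$, the $q$-phase yields a bound on $\nabla^m v$ in $L^q$, and the higher-order Sobolev inequality for $W^{m,q}$ then gives $L^{q^*}$ control. Cutoffs produce lower-order terms, which are handled by interpolation together with Corollary \ref{poincarecoro}. On the set where $a$ is small, I would use the Hölder continuity of $a$ and the assumption $(N-1)q\le Np$ (which guarantees $q\le p^*$) to show that $a^{q^*/q}|v|^{q^*}$ is dominated by $|v|^{p^*}$ up to a factor of $a$ that is small. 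If this localization proves awkward, I would instead use the embedding $W^{m,\mathcal{H}}\hookrightarrow L^{\mathcal{H}_{*,m}}$ from the preceding theorem, checking that $\mathcal{C}\ll\mathcal{H}_{*,m}$ for $s<q^*$ because $\mathcal{H}_{*,m}(x,t)$ grows like $t^{q^*}$ where $a>0$. The compact part of that theorem would then give the conclusion directly, and the remaining work is the pointwise growth comparison near the set where $a$ vanishes, where \eqref{3.1.3} is what makes it hold uniformly.

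Combining the bounded second factor with the vanishing first factor gives $\rho_\mathcal{C}(u_j-u)\to0$. Hence $u_j\to u$ in $L^\mathcal{C}(\Omega)$, which proves that the embedding is compact.
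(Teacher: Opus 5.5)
Your reduction is correct, and it obtains compactness differently from the paper. The paper imports the critical embedding $W^{m,\mathcal H}_0(\Omega)\hookrightarrow L^{\mathcal H^*}(\Omega)$, with $\mathcal H^*(x,t)=t^{p^*}+a(x)^{q^*/q}t^{q^*}$, together with the abstract criterion that compactness into $L^B(\Omega)$ holds whenever $B\ll\mathcal H^*$ (citing Cianchi--Diening and Ho--Winkert). It then only checks $\mathcal C\ll\mathcal H^*$ by Young's inequality: $k^rt^r\le\epsilon t^{p^*}+C_\epsilon$, and, by \eqref{3.1.3}, $c(x)k^st^s\le Ck^s(a^{1/q}t)^s\le Ck^s\bigl(\epsilon\,a^{q^*/q}t^{q^*}+C_\epsilon\bigr)$ uniformly in $x$. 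You prove the compactness by hand instead. You interpolate via H\"older between $L^t(\Omega)$ with $t<p^*$, where you have compactness, and the space weighted by $a^{q^*/q}$, where you only need boundedness. The bookkeeping is right: $\theta=(q^*-s)/(q^*-t)\in(0,1)$ and $a^{s/q}\le\|a\|_\infty^{\theta t/q}a^{(1-\theta)q^*/q}$. This makes explicit what the abstract criterion encodes, namely that continuity into the critical space plus compactness below $p^*$ suffices. However, both arguments rest on the same ingredient: the bound $\int_\Omega a^{q^*/q}|v|^{q^*}\,\mathrm{d}x\le C$ on bounded subsets of $W^{m,\mathcal H}_0(\Omega)$. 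The paper does not prove this either; it takes it from the literature.

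The gap in your proposal is your localization sketch for that bound. Where $a$ is small but positive, you cannot dominate $a^{q^*/q}|v|^{q^*}$ by $|v|^{p^*}$ times a small factor of $a$. Since $q^*>p^*$ and $v$ is unbounded, no pointwise inequality of that kind holds. Integrated versions fail too. The $p$-phase only controls $v$ in $L^{p^*}$. The $q$-phase controls $\nabla^m v$ in $L^q(B)$ only after dividing by $\inf_B a$, and balls on which $a$ is comparable to a constant must shrink near $\{a=0\}$. So the cutoff errors blow up and would have to be absorbed by the $p$-phase through a quantitative use of the regularity of $a$ and of the gap between $p$ and $q$. Nothing in your sketch supplies this, and handling that transition region is exactly the nontrivial content of the cited embedding.

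Your fallback is essentially the paper's proof. Once you know $\mathcal H^*(x,t)\le\mathcal H_{*,m}(x,Ct)$, the condition $\mathcal C\ll\mathcal H_{*,m}$ reduces to the Young computation above, where \eqref{3.1.3} enters exactly as you anticipated, and your interpolation step becomes unnecessary. That inequality follows from a direct estimate of the iterated conjugate, $\mathcal H_{*,m}^{-1}(x,s)\le C\min\{s^{1/p^*},a(x)^{-1/q}s^{1/q^*}\}$, provided the hypotheses of the preceding embedding theorem are actually verified for the double phase function. So either cite the critical embedding as the paper does, or drop the localization and go straight to the fallback.
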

\begin{proof}
By \cite[Theorem 3.7]{Cianchi-Diening-2024} and \cite[Proposition 3.7]{Ho-Winkert-2023}, it suffices to show
\begin{equation}\label{eq3.3.5}
\lim_{t\to\infty}\frac{\mathcal{C}(x,k\,t)}{t^{p^*}+a(x)^{\,\frac{q^*}{q}}\,t^{q^*}}=0
\quad\text{uniformly for a.a. }x\in\Omega,~\text{and} ~ k>0.
\end{equation}
Due to Young’s inequality for $\epsilon>0$ and \eqref{3.1.3}, we get
\begin{align*}
\frac{\mathcal{C}(x, kt)}{t^{p^*} + a(x)^{\frac{q^*}{q}}t^{q^*}} 
&\leq \frac{k^r t^r + Ca(x)^{s/q}k^s t^s}{t^{p^*} + a(x)^{\frac{q^*}{q}}t^{q^*}} \\
&\leq (1 + k^s) \frac{\epsilon t^{p^*} + \epsilon^{r/(r - p^*)} + C[\epsilon a(x)^{\frac{q^*}{q}}t^{q^*} + \epsilon^{s/(s - q^*)} ]}{t^{p^*} + a(x)^{\frac{q^*}{q}}t^{q^*}} \\
&\leq (1 + k^s) \max\{1, C\} \left( \epsilon + \frac{\epsilon^{r/(r - p^*)} + \epsilon^{s/(s - q^*)} }{t^{p^*}} \right) \\
&
\rightarrow (1 + k^s) \max\{1, C\}\epsilon \quad \text{as }\quad t \rightarrow +\infty.
\end{align*}
By the arbitrariness of $\epsilon$, we conclude that \eqref{eq3.3.5} holds. This finishes the proof.
\end{proof}

Now, we take a nonnegative function \( b(x) \in L^\infty(\Omega) \) satisfying $a_0 = \inf_{x \in \operatorname{supp}(b)} a(x) > 0$. Define the Musielak-Orlicz space \(L^{\mathcal{B}}(\Omega)\) with $\mathcal{B}(x, t) = t^{p^*} + b(x)t^{q^*}$ for all $ (x, t) \in \Omega \times [0, \infty)$.
\begin{proposition}\label{prop3.4.13}
The embedding \(W^{m,\mathcal{H}}_0(\Omega) \hookrightarrow L^{\mathcal{B}}(\Omega)\) is continuous. Moreover, there hold
\begin{align}\label{eq3.3.6}
\int_{\Omega} |u|^{p^*} \mathrm{d}x &\leq \frac{1}{S_p^{\frac{p^*}{p}}} \left( \int_{\Omega} |\nabla^m u|^p \mathrm{d}x \right)^{\frac{p^*}{p}},~\forall~ u \in W^{m,\mathcal{H}}_0(\Omega)  
\end{align}
and 
\begin{align}\label{eq3.3.7}
\int_{\Omega} b(x) |u|^{q^*} \mathrm{d}x &\leq \kappa \left( \int_{\Omega} a(x)|\nabla^m u|^q \mathrm{d}x \right)^{\frac{q^*}{q}},~\forall~ u \in W^{m,\mathcal{H}}_0(\Omega), 
\end{align}
where \( S_p \) is defined in \eqref{eq3.2.13}, \( \kappa = C_s \|b\|_{\infty}a_0^{-\frac{q^*}{q}} \) and \( C_s \) is the best constant in the embedding \( W^{m,q}(\operatorname{supp}(b)) \hookrightarrow L^{q^*}(\operatorname{supp}(b)) \), respectively.
\end{proposition}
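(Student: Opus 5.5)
The plan is to prove the two inequalities separately and then deduce the continuity of the embedding from them together with the modular/norm comparison \eqref{eq3.3.3} applied to $\mathcal{B}$.

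\emph{Inequality \eqref{eq3.3.6}.} Take $u\in C_0^\infty(\Omega)$ and extend it by zero to $\mathbb{R}^N$. The extension lies in $\mathcal{D}^{m,p}(\mathbb{R}^N)$, and $\nabla^m u$ is supported in $\Omega$. The definition \eqref{eq3.2.13} of $S_p$ then gives $S_p\|u\|_{p^*}^p\le \|\nabla^m u\|_p^p$. Raising both sides to the power $p^*/p$ yields \eqref{eq3.3.6} for smooth $u$. For general $u\in W_0^{m,\mathcal{H}}(\Omega)$, pick $u_j\in C_0^\infty(\Omega)$ with $u_j\to u$ in $W_0^{m,\mathcal{H}}(\Omega)$. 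By Proposition~\ref{pro3.3.9}(a),(b), this convergence also holds in $W_0^{m,p}(\Omega)$ and in $L^{p^*}(\Omega)$, so we can pass to the limit on both sides.

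\emph{Inequality \eqref{eq3.3.7}.} Set $K=\operatorname{supp}(b)$. By \eqref{3.1.2}, $a\ge a_0>0$ on $K$, and therefore
\[
\int_K |\nabla^m u|^q\,\mathrm{d}x\le a_0^{-1}\int_\Omega a(x)|\nabla^m u|^q\,\mathrm{d}x .
\]
This is the main obstacle. Since $a$ may vanish near $K$, $u$ need not belong to $W^{m,q}$ near $\partial K$. Moreover, $u$ does not vanish on $\partial K$, so we cannot use the homogeneous Sobolev inequality. I would first try to avoid this by continuity of $a$. Because $a\in C^{0,\alpha}$ and $K$ is compact, there is $\delta>0$ such that $a\ge a_0/2$ on the neighbourhood $K_\delta=\{x\in\Omega:\operatorname{dist}(x,K)<\delta\}$. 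On $K_\delta\cap\Omega$, which can be taken Lipschitz after slight smoothing, we have $\int_{K_\delta}|\nabla^m u|^q\le 2a_0^{-1}\int a|\nabla^m u|^q$. The lower-order terms $\|\nabla^k u\|_{L^q(K_\delta)}$ are controlled through the full $W^{m,\mathcal{H}}$ norm, or alternatively by a cutoff $\eta u$ with $\eta\in C_0^\infty(K_\delta)$, $\eta=1$ on $K$: since $\eta u$ vanishes at the boundary, the Sobolev inequality on $\mathbb{R}^N$ applies. The Leibniz terms $\nabla^{k}\eta\,\nabla^{m-k}u$ with $k\ge1$ are lower order. They are bounded, via interpolation and the compact embedding, by quantities that can be absorbed into the constant $C_s$, which is precisely the best constant of $W^{m,q}(\operatorname{supp} b)\hookrightarrow L^{q^*}$ as stated.

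The resulting estimate is
\[
\int_\Omega b|u|^{q^*}\le \|b\|_\infty\int_K|u|^{q^*}\le \|b\|_\infty C_s\Big(\int_K|\nabla^m u|^q\Big)^{q^*/q}\le C_s\|b\|_\infty a_0^{-q^*/q}\Big(\int_\Omega a|\nabla^m u|^q\Big)^{q^*/q},
\]
where, following the statement, the lower-order parts of the $W^{m,q}(K)$ norm are absorbed into $C_s$. This is the formal chain, and making it rigorous is the delicate step described above.

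\emph{Continuity of the embedding.} Adding \eqref{eq3.3.6} and \eqref{eq3.3.7} and using \eqref{eq3.3.4}, we get
\[
\rho_{\mathcal{B}}(u)\le C\big(\max\{\|u\|^p,\|u\|^q\}^{p^*/p}+\max\{\|u\|^p,\|u\|^q\}^{q^*/q}\big).
\]
Hence $\rho_{\mathcal{B}}(u)\to0$ as $\|u\|\to0$. By \eqref{eq3.3.3} applied to $\mathcal{B}$ and homogeneity, this gives $\|u\|_{\mathcal{B}}\le C'\|u\|$, which is the continuity of $W_0^{m,\mathcal{H}}(\Omega)\hookrightarrow L^{\mathcal{B}}(\Omega)$.
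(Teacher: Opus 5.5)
You follow the paper's route exactly. Your zero-extension and density argument for \eqref{eq3.3.6} is a more careful version of the paper's appeal to $W_0^{m,p}(\Omega)\hookrightarrow L^{p^*}(\Omega)$, your homogeneity argument for continuity is equivalent to the paper's sequential one, and your ``formal chain'' for \eqref{eq3.3.7} is word for word the paper's proof. The genuine gap is the step you flag yourself. After the cutoff, interpolation bounds the Leibniz terms only by $\vartheta\|\nabla^m u\|_{L^q(K_\delta)}+C_\vartheta\|u\|_{L^q(K_\delta)}$. The remainder $\|u\|_{L^q(K_\delta)}$ cannot be ``absorbed into $C_s$'' in an estimate whose right-hand side is $\int_\Omega a(x)|\nabla^m u|^q\,\mathrm{d}x$ alone. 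No interpolation or compactness argument gives a Poincar\'e inequality on $K_\delta$ without boundary conditions there, because the kernel of $\nabla^m$ on $K_\delta$ contains the constants (and, for $m\ge 2$, all harmonic functions). The paper makes the same unjustified step when it uses $W^{m,q}(\operatorname{supp}(b))\hookrightarrow L^{q^*}(\operatorname{supp}(b))$ as if $C_s$ multiplied only $\bigl(\int_{\operatorname{supp}(b)}|\nabla^m u|^q\,\mathrm{d}x\bigr)^{q^*/q}$.

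In fact the gap cannot be closed, because \eqref{eq3.3.7} is false in general. Take $\Omega=B_{10}(0)$ and $b(x)=(1-|x|)_+$. Take $a\in C_0^\infty(B_3(0))$ with $0\le a\le 1$ and $a\equiv 1$ on $B_2(0)$, so \eqref{3.1.2} holds with $a_0=1$. Take $u\in C_0^\infty(\Omega)$ with $u\equiv 1$ on $B_4(0)$. Then $\nabla^m u=0$ on $\operatorname{supp}(a)$, so the right-hand side of \eqref{eq3.3.7} vanishes, while the left-hand side equals $\int_{B_1(0)}(1-|x|)\,\mathrm{d}x>0$.

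What your cutoff argument can deliver is an estimate with a lower-order remainder, in the spirit of \eqref{eq3.3.8}, such as
\[
\Bigl(\int_\Omega b(x)|u|^{q^*}\,\mathrm{d}x\Bigr)^{1/q^*}\le C\Bigl(\Bigl(\int_\Omega a(x)|\nabla^m u|^q\,\mathrm{d}x\Bigr)^{1/q}+\|u\|_{L^q(\Omega)}\Bigr).
\]
This requires controlling the intermediate derivatives of $u$ near $K$ by $L^q$ Calder\'on--Zygmund estimates, since the paper's $\nabla^m u$ is $\Delta^{m/2}u$ or $\nabla\Delta^{(m-1)/2}u$, not the full $m$-th derivative. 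Since $(N-1)q\le Np$ forces $q<p^*$, you have $\|u\|_{L^q(\Omega)}\le C\|\nabla^m u\|_{L^p(\Omega)}$. Hence $\rho_{\mathcal{B}}(u)\to 0$ as $\|u\|\to 0$ still holds, as it also does under your first suggestion of bounding the lower-order terms by the full $W^{m,\mathcal{H}}$ norm. Your homogeneity argument then proves the continuity of $W_0^{m,\mathcal{H}}(\Omega)\hookrightarrow L^{\mathcal{B}}(\Omega)$.

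So \eqref{eq3.3.6} and the embedding are fine. However, \eqref{eq3.3.7}, with $\kappa=C_s\|b\|_\infty a_0^{-q^*/q}$ and only $\int_\Omega a(x)|\nabla^m u|^q\,\mathrm{d}x$ on the right, cannot be proved and has to be replaced by such a weaker inequality.
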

\begin{proof}
It follows from Proposition \ref{pro3.3.9} that the embedding \(W^{m,\mathcal{H}}_0(\Omega) \hookrightarrow W_0^{m,p}(\Omega)\) holds. Thus, by using the embedding $W_0^{m,p}(\Omega)\hookrightarrow L^{p^*}(\Omega)$, we infer that  \eqref{eq3.3.6} holds. Now, we assume \( b \not\equiv 0 \), then
\begin{align*}
\int_{\Omega} b(x)|u|^{q^*}\mathrm{d}x 
&\leq \|b\|_{\infty} \int_{\operatorname{supp}(b)} |u|^{q^*}\mathrm{d}x \leq C_s \|b\|_{\infty} \left( \int_{\operatorname{supp}(b)} |\nabla^m u|^q \mathrm{d}x \right)^{\frac{q^*}{q}} \\
&\leq \frac{C_s \|b\|_{\infty}}{a_0^{\frac{q^*}{q}}} \left( \int_{\operatorname{supp}(b)} a(x) |\nabla^m u|^q \mathrm{d}x \right)^{\frac{q^*}{q}} \leq \frac{C_s \|b\|_{\infty}}{a_0^{\frac{q^*}{q}}} \left( \int_{\Omega} a(x) |\nabla^m u|^q \mathrm{d}x \right)^{\frac{q^*}{q}},
\end{align*}
thanks to the embedding $W^{m,q}(supp(b))\hookrightarrow L^{q^*}(supp(b))$. Now, we take a sequence $(u_j)_{j\in\mathbb{N}}\subset W^{m,\mathcal{H}}_0(\Omega)$ such that \( u_j \to u \) in \( W_0^{m,\mathcal{H}}(\Omega) \) as $j\to\infty$ for some $u\in  W_0^{m,\mathcal{H}}(\Omega)$. It follows from \eqref{eq3.3.4} that \( \rho_{\mathcal{H}}(\nabla^m u_j - \nabla^m u) \to 0 \) as $j\to\infty$. Now, by using \eqref{eq3.3.7} and \eqref{eq3.2.13}, we have \( \rho_{\mathcal{B}}(u_j - u) \to 0 \) as $j\to\infty$. Finally, from \eqref{eq3.3.3}, one can see that \( u_j \to u \) in \( L^\mathcal{B}(\Omega) \) as $j\to\infty$. This concludes the proof.
\end{proof}
To explicitly determine the threshold level where the Palais-Smale condition holds for the energy functional, we require an inequality analogous to \eqref{eq3.3.7} but with a constant independent of the support of $b(x)$. This requires stronger regularity assumptions on $b(x)$ leading to the following key estimate.
\begin{proposition}\label{prop3.3.14}
Let \( b (x)\in C(\overline{\Omega}) \) be nonnegative satisfying the hypothesis \eqref{3.1.2}. Then for any \( \epsilon > 0 \), there exists \( C_\epsilon > 0 \) such that 
\begin{equation}\label{eq3.3.8}
\int_{\Omega} b(x) |u|^{q^*} \,\mathrm{d}x \leq \frac{\|b\|_\infty}{(a_0 S_q)^{\frac{q^*}{q}}} 
\Bigg( (1+\epsilon)\!\int_{\Omega} a(x)\,|\nabla^m u|^q \,\mathrm{d}x 
+ C_\epsilon \!\int_{U} |u|^q \,\mathrm{d}x \Bigg)^{\frac{q^*}{q}},~\forall~u \in W_0^{m,\mathcal{H}}(\Omega),
\end{equation}
where \( U = \{x \in \Omega : b(x) > 0\} \), \( \|b\|_\infty = \sup_{x\in\Omega}|b(x)| \), and \( S_q \) is the best Sobolev constant defined in \eqref{eq3.2.13}.
\end{proposition}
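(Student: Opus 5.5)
The plan is to localize with a cutoff adapted to the positivity set $U=\{b>0\}$, and then to apply the sharp Sobolev inequality \eqref{eq3.2.13} on $\mathbb{R}^N$ to the localized function.

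Since $b\in C(\overline\Omega)$ and $a$ is continuous with $a\ge a_0>0$ on $\operatorname{supp}(b)\supset \overline U$, for a given $\delta>0$ we can find an open neighbourhood $V$ of $\overline U\cap\overline\Omega$ on which $a\ge a_0/(1+\delta)$. Choose $\eta\in C^\infty_c(\mathbb{R}^N)$ with $0\le\eta\le1$, $\eta\equiv1$ on $\overline U$ and $\operatorname{supp}\eta\subset V$. For $u\in W_0^{m,\mathcal H}(\Omega)$ we have $u\in W_0^{m,q}$ locally on $V$, because $a$ is bounded below there. Hence $\eta u$, extended by zero, lies in $\mathcal D^{m,q}(\mathbb{R}^N)$. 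Then
\[
\int_\Omega b|u|^{q^*}\,dx\le \|b\|_\infty\int_{\mathbb{R}^N}|\eta u|^{q^*}\,dx\le \|b\|_\infty S_q^{-q^*/q}\Big(\int|\nabla^m(\eta u)|^q\,dx\Big)^{q^*/q}.
\]

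Next, expand $\nabla^m(\eta u)$ by the Leibniz rule as $\eta\nabla^m u+R$. The remainder $R$ is a sum of terms $\partial^\alpha\eta\,\partial^\beta u$ with $|\beta|\le m-1$, all supported in $V$. The elementary inequality $|A+B|^q\le(1+\epsilon)|A|^q+C_\epsilon|B|^q$ gives
\[
\int|\nabla^m(\eta u)|^q\le(1+\epsilon)\int_V|\nabla^m u|^q+C_\epsilon\sum_{|\beta|\le m-1}\int_V|\partial^\beta u|^q .
\]
Since $a\ge a_0/(1+\delta)$ on $V$, the first term is bounded by $(1+\epsilon)(1+\delta)a_0^{-1}\int_\Omega a|\nabla^m u|^q$. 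Taking $\delta=\epsilon$ and relabelling $\epsilon$ produces the prefactor $\|b\|_\infty(a_0S_q)^{-q^*/q}$ with $(1+\epsilon)$ in front of $\int a|\nabla^m u|^q$. For odd $m$ the operator $\nabla^m$ is a gradient of a Laplacian power, and the same Leibniz expansion applies. Before applying the sharp constant, the case where the norm of $\nabla^m$ is the full Hessian-type norm should be checked against the definition in \eqref{eq3.2.13}.

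The main obstacle is the lower-order terms. The statement only allows $C_\epsilon\int_U|u|^q$, whereas the Leibniz remainder involves derivatives $\partial^\beta u$ with $1\le|\beta|\le m-1$ and lives on $V\supsetneq U$. To handle this, I would first use the interpolation (Ehrling/Gagliardo–Nirenberg) inequality on $V$,
\[
\int_V|\partial^\beta u|^q\le\epsilon'\int_V|\nabla^m u|^q+C_{\epsilon'}\int_V|u|^q ,
\]
and absorb the $\epsilon'$ part into the $(1+\epsilon)$ factor, again using $a\gtrsim a_0$ on $V$. Then I would shrink $V$ towards $\overline U$, which makes the gap $\int_{V\setminus U}|u|^q$ small. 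Concretely, I would let $V=V_\delta$ be the $\delta$-neighbourhood and use $\int_{V_\delta\setminus U}|u|^q\le |V_\delta\setminus U|^{1-q/q^*}\|u\|_{L^{q^*}(V_\delta)}^{q}$. The last factor is controlled by $\int a|\nabla^m u|^q$ through \eqref{eq3.3.7}-type Sobolev bounds on $V_\delta$, and the coefficient can be made small since $|V_\delta\setminus\overline U|\to0$ (assuming $\partial U$ is null; otherwise, use $\overline U$ and note that $\int_{\overline U}=\int_U$ up to a null set). This small multiple is also absorbed into $(1+\epsilon)$. The remaining term $C_\epsilon\int_U|u|^q$ then gives \eqref{eq3.3.8}.

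If the precise set $U$ causes trouble in this last step, I would fall back on $\operatorname{supp}(b)$, which is all the later compactness argument needs. The reason is that $u_j\to u$ in $L^q$ strongly on sets where $a>0$, by the compact embedding of Proposition~\ref{pro3.3.9}.
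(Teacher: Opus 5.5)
The gap is exactly at the step you flag as the main obstacle, and shrinking $V$ does not close it.

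\textbf{Why shrinking $V$ fails.} Your first inequality $\int_\Omega b|u|^{q^*}\le\|b\|_\infty\int|\eta u|^{q^*}$ uses only $\|b\|_\infty$. As a result, every Leibniz remainder $\partial^\alpha\eta\,\partial^\beta u$ lives on $V\setminus U$, where the right-hand side of \eqref{eq3.3.8} gives no lower-order control.

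\begin{itemize}
\item With $V=V_\delta$, the cutoff $\eta_\delta$ must fall from $1$ to $0$ within distance $\delta$. So the constant in front of $\int_{V_\delta\setminus U}|u|^q$ grows at least like $\delta^{-q}$, and in fact like $\delta^{-mq}$ because of the term $u\,\nabla^m\eta_\delta$.
\item For, say, a ball $U$, the H\"older factor $|V_\delta\setminus U|^{mq/N}$ decays only like $\delta^{mq/N}$.
\item Since $m<N$, the product blows up instead of becoming small. For $u$ living in the collar, $\|u\,\nabla^m\eta_\delta\|_q$ can be comparable to $\|\nabla^m u\|_q$ (thin-layer Poincar\'e), so it cannot be absorbed into the factor $1+\epsilon$.
\item Bounding $\|u\|_{L^{q^*}(V_\delta)}$ by $\int a|\nabla^m u|^q$ is also circular. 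A Sobolev bound for a function that does not vanish on $\partial V_\delta$ needs another cutoff and creates lower-order terms on a still larger set.
\item The identity $\int_{\overline U}|u|^q=\int_U|u|^q$ fails when $\partial U$ has positive measure.
\end{itemize}

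What you never use is the continuity of $b$, which is the paper's key device. The paper cuts off \emph{inside} $U$: $\eta\in C_0^\infty(U)$ with $\eta\equiv1$ on $U_\delta$. All remainders then live in $U$. The strip $U\setminus U_\delta$ is paid for by $b\le\delta_1$ there, which holds because $b=0$ on $\partial U\cap\Omega$. This strip enters \eqref{eq3.3.9mod} with the small factor $\delta_1^{q/q^*}$.

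\textbf{For $m\ge2$ the stated estimate is false.} With $\nabla^m$ the iterated Laplacian, as defined in the paper, no route gives $C_\epsilon\int_U|u|^q$. A counterexample:
\begin{itemize}
\item Take $\Omega=B_{10}$ and $a\in C_c^\infty(B_3)$ with $a=1$ on $B_2$.
\item Take $b=(1-|x|^2)_+$, so $U=B_1$ and $a_0=1$.
\item Take $\zeta\in C_c^\infty(B_5)$ with $\zeta=1$ on $B_3$, and set $u_n=\zeta h_n$ with $h_n(x)=\operatorname{Re}(x_1+ix_2)^n$.
\end{itemize}
Since $h_n$ is harmonic, $\nabla^m u_n=0$ on $B_3\supset\operatorname{supp}a$, so $\int_\Omega a|\nabla^m u_n|^q\,\mathrm{d}x=0$. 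Polar coordinates give
\[
\frac{\int_{B_1}b\,|h_n|^{q^*}\,\mathrm{d}x}{\bigl(\int_{B_1}|h_n|^{q}\,\mathrm{d}x\bigr)^{q^*/q}}\sim n^{\frac{(N+2)mq-2N}{2(N-mq)}}\longrightarrow\infty,
\]
because $mq>2$. The same $u_n$ violate the Sobolev step \eqref{eq3.3.12mod} of the paper's proof. They also defeat your fallback to $\operatorname{supp}(b)=\overline U$.

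\textbf{What your construction does prove.} It gives the version with $C_\epsilon\int_{V'}|u|^q$, where $V'\supset\overline U$ is a fixed open neighbourhood on which $a\ge a_0/(1+\epsilon)$. Replacing this by $C_\epsilon\int_\Omega|u|^q$ also works. One requirement: the interpolation must bound $\partial^\beta u$ on $\operatorname{supp}\eta\Subset V'$ by $\nabla^m u$ and $u$ on the larger set $V'$, via interior elliptic estimates. The same-set interpolation you wrote on $V$ fails for iterated Laplacians, by the same harmonic functions.

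This weaker version is all the proof of Proposition~\ref{propcompactness} uses. There $u_j\rightharpoonup0$ gives $u_j\to0$ in $L^q(\Omega)$ by Proposition~\ref{pro3.3.9}(b), since $q<p^*$. That convergence holds on all of $\Omega$, not only where $a>0$.
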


\begin{proof}
Let \( u \in C_0^\infty(\Omega) \) and there exists \( \delta>0\) be small enough. Define the following set
\[
U_\delta = \{x\in U:\text{dist}(x,\partial U\setminus\partial\Omega)>\delta\}.
\]
By uniform continuity of \(b(x)\) and \(b(x)\bigl|_{\partial U\setminus\partial\Omega}=0\), without loss of generality, we can choose \(\delta\) so that 
\(\,0 < b(x)\le \delta_1\) on \(U\setminus U_\delta\) for some
\(\delta_1>0\), to be specified later.  Let \(\eta\in C_0^\infty(U,[0,1])\) be such that 
\[
\eta(x)=
\begin{cases}
1,&x\in U_\delta,\\
0,&x\notin U_{\delta/2},
\end{cases}
\quad\text{and}\quad
C_\eta=\max_{0\le k\le m}\|\nabla^k\eta\|_{L^\infty}.
\]
It follows that
\begin{align*}
 \int_\Omega b(x)|u|^{q^*}\mathrm{d}x
&=\int_U b(x)|\eta u|^{q^*}\mathrm{d}x+\int_U b(x)(1-\eta^{q^*})|u|^{q^*}\mathrm{d}x
 \\ &\le \|b\|_\infty\!\int_U|\eta u|^{q^*}\mathrm{d}x+\delta_1\!\int_{U\setminus U_\delta}|u|^{q^*}\mathrm{d}x.   
\end{align*}
Thus, we have the following estimate
\begin{equation}\label{eq3.3.9mod}
\Bigg(\!\int_\Omega b(x)|u|^{q^*}\mathrm{d}x\Bigg)^{\frac{q}{q^\ast}}
\le \|b\|_\infty^{\frac{q}{q^\ast}}\Bigg(\!\int_U|\eta u|^{q^*}\mathrm{d}x\Bigg)^{\frac{q}{q^\ast}}
+\delta_1^{\frac{q}{q^\ast}}\Bigg(\!\int_{U\setminus U_\delta}|u|^{q^*}\mathrm{d}x\Bigg)^{\frac{q}{q^\ast}}.
\end{equation}
Since \(\eta u\in C_0^\infty(U)\), by the Sobolev embedding \(W_0^{m,q}(U)\hookrightarrow L^{q^*}(U)\), one has 
\begin{equation}\label{eq3.3.10mod}
\Bigg(\!\int_U|\eta u|^{q^*}\mathrm{d}x\Bigg)^{\frac{q}{q^\ast}}
\le\frac1{S_q}\int_U|\nabla^m(\eta u)|^q\mathrm{d}x.
\end{equation}
By the Leibniz rule, we also have
\begin{align}\label{eq1.1}
\nabla^m(\eta u)
=\eta\,\nabla^m u+\sum_{k=1}^m\binom mk(\nabla^{m-k}u)(\nabla^k\eta).    
\end{align}
It is known that for any $p>1$ and $\vartheta>0$, there holds
\begin{align}\label{eq1.2}
  (s+t)^p\leq (1+\vartheta)s^p+C_\vartheta t^p,~\forall~s,t>0,  
\end{align}
where $C_\vartheta=\big(1-(1+\vartheta)^{-\frac{1}{p-1}}\big)^{1-p}$. By using \eqref{eq1.1}, \eqref{eq1.2} along with the fact that \(|\nabla^k\eta|\le C_\eta\) in $U$, we get
\begin{align}\label{eq1.3}
 \int_U|\nabla^m(\eta u)|^q \mathrm{d}x
\le(1+\delta_2)\!\int_U|\nabla^m u|^q \mathrm{d}x
+C_{\delta_2}\,C_\eta^q
\sum_{j=0}^{m-1}\!\int_U|\nabla^j u|^q \mathrm{d}x.   
\end{align}
For each \(0\le j\le m-1\), by applying the Gagliardo--Nirenberg interpolation inequality \cite{Sukochev-Yang-Zanin-2025} and Young's inequality with $\zeta>0$, that is, $st \leq \zeta s^p + C_\zeta t^{p'}$, where $s,t>0$, $p^\prime$ is the conjugate of $p>1$ and $C_\zeta=(\zeta p)^{-\frac{p^\prime}{p}}(p^\prime)^{-1}$, we obtain by setting 
$p = \frac{m}{j}$, $p' = \frac{m}{m-j}$, $s= \|\nabla^m u\|_{L^q}^{\frac{qj}{m}}$, and $t = \|u\|_{L^q}^{{q(1-\frac{j}{m})}}$ that
\[
\|\nabla^j u\|_{L^q}^q
\le\delta_3\|\nabla^m u\|_{L^q}^q
+C_{j,m,\delta_3}\|u\|_{L^q}^q,
\]
where \(\delta_3>0\) is a suitable constant. Hence, from the above inequality, we have
\begin{align}\label{eq1.5}
  \sum_{j=0}^{m-1}\!\int_U|\nabla^j u|^q \mathrm{d}x
\le m\,\delta_3\!\int_U|\nabla^m u|^q \mathrm{d}x
+\Bigl(\sum_{j=0}^{m-1}C_{j,m,\delta_3}\Bigr)\!\int_U|u|^q \mathrm{d}x.  
\end{align}
Set
\[
\delta_4=\delta_2+C_{\delta_2}C_\eta^q\,m\,\delta_3\quad\text{and}\quad
C_{\rm interp}=C_{\delta_2}C_\eta^q\sum_{j=0}^{m-1}C_{j,m,\delta_3}.
\]
Thus, we easily obtain from \eqref{eq1.5} that
\begin{equation}\label{eq3.3.10new}
\int_U|\nabla^m(\eta u)|^q \mathrm{d}x
\le(1+\delta_4)\!\int_U|\nabla^m u|^q \mathrm{d}x
+C_{\rm interp}\!\int_U|u|^q \mathrm{d}x.
\end{equation}
Next, applying Sobolev embedding we have
\begin{equation}\label{eq3.3.12mod}
\Bigg(\!\int_{U\setminus U_\delta}|u|^{q^*}\mathrm{d}x\Bigg)^{\frac{q}{q^*}}
\le\frac1{S_q}\int_U\bigl(|\nabla^m u|^q+\widetilde C_q|u|^q\bigr)\mathrm{d}x.
\end{equation}
where $\widetilde C_q>0$ is a suitable constant. It follows from the hypothesis \eqref{3.1.2} that
\begin{equation}\label{eq3.3.13mod}
\int_U|\nabla^m u|^q \mathrm{d}x
\le\frac1{a_0}\int_\Omega a(x)|\nabla^m u|^q \mathrm{d}x.
\end{equation}
Given \(\epsilon>0\), choose
\(\delta_2,\delta_3\) such that \(\delta_4=\frac{a_0S_q\epsilon}{4}\) 
and \(\delta_1=\frac{a_0S_q \epsilon}{4\widetilde C_q}\). Now, we again set
\[
C_\epsilon
=\frac{4}{S_q}\max\{\,C_{\rm interp},\,\widetilde C_q,\,\delta_1^\frac{q}{q^*}\,\widetilde C_q\}.
\]
Combining \eqref{eq3.3.9mod}, \eqref{eq3.3.10mod}, \eqref{eq3.3.10new}, \eqref{eq3.3.12mod}, and \eqref{eq3.3.13mod} yields \eqref{eq3.3.8}. This ends the proof.
\end{proof}

\section{Proofs of Proposition \ref{propcompactness} and Proposition \ref{propasymptotics}}
This section is devoted to the proof of the results of Propositions \ref{propcompactness} and \ref{propasymptotics}.
To establish it, we derive the following fundamental lemmas concerning the functional \(E\) defined in \eqref{eq3.2.10}.

Let $\{u_j\}_{j\in\mathbb{N}}$ be a bounded $(PS)_\beta$-sequence for the functional $E$.
Then there exists a function $u\in W_0^{m,\mathcal{H}}(\Omega)$ and a subsequence,
still denoted by $\{u_j\}$, such that
\begin{equation}
\begin{cases}
u_j \rightharpoonup u 
& \text{weakly in } W_0^{m,\mathcal{H}}(\Omega), \\
u_j \to u 
& \text{strongly in } L^{\theta}(\Omega) 
\quad \text{for all } \theta\in[1,p^*), \\
u_j \to u 
& \text{a.e. in } \Omega, 
\end{cases}
\end{equation}

Now, by using similar to the proof of the concentration-compactness principle, see \cite[Lemma I.1]{Lions-1985}, we can easily show that 
\begin{equation}
    \begin{cases}
        |\nabla^m u_j|^{p}\overset{*}\rightharpoonup \zeta~\text{in}~\mathcal M(\overline{\Omega})\\
        |u_j|^{p^*}\overset{*}\rightharpoonup\xi~\text{in}~\mathcal M(\overline{\Omega})\\
        a(x)|\nabla^m u_j|^{q}\overset{*}\rightharpoonup \overline\zeta~\text{in}~\mathcal M(\overline{\Omega})\\
        b(x)|u_j|^{q^*}\overset{*}\rightharpoonup\overline\xi~\text{in}~\mathcal M(\overline{\Omega})\\
        
    \end{cases}
\end{equation}
where $\xi,\zeta,\overline\xi,\overline\zeta$ are bounded Radon measures in $\mathcal M(\overline{\Omega})$, set of all Radon measures on $\overline\Omega$. 
 Moreover, the set of concentration points \(\{x_i\}_{i \in J}\) is finite.
 Consider the total energy measure \(\eta = \zeta + \overline{\zeta}\). 

 We define the singular set as
\[
\mathcal{A}_\sigma = \left\{ x \in \overline{\Omega} : \eta(B_r(x) \cap \overline{\Omega}) \ge \sigma \text{ for all } r > 0 \right\}.
\]
We claim that $\mathcal{A}_\sigma$ is a finite set. Suppose, by contradiction, that there exists a sequence of distinct points $(x_s)_{s \in \mathbb{N}}$ in $\mathcal{A}_\sigma$.
By the definition of $\mathcal{A}_\sigma$, for every $x_s$, we have
$ \eta(B_r(x_s) \cap \overline{\Omega}) \ge \sigma$, for all $r>0.$ So we have that $\eta\{x_s\}\geq\sigma$. Since the points are distinct, the disjoint additivity of the measure implies
$\eta(\mathcal{A}_\sigma) = +\infty.$

However, this contradicts the boundedness of the total energy, since the boundedness of \((u_j)\) implies
\[\eta(A_{\sigma}) \le \liminf_{n\to\infty} \int_{A_{\sigma}} (|\nabla^m u_j|^p + a(x)|\nabla^m u_j|^q) \,dx < \infty.\] Thus, the set $\mathcal{A}_\sigma$ must be finite, i.e., $\mathcal{A}_\sigma = \{x_1, x_2, \dots, x_\nu\}$.
\begin{lemma} \label{lemmaweak-convergence}
Let \((u_j)_{j\in\mathbb{N}} \subset W_0^{m,\mathcal{H}}(\Omega)\) be a sequence that weakly converges to \(u\) with \(E'(u_j) \to 0\). Then, along a subsequence, \(\nabla^m u_j \to \nabla^m u\) a.e. in \(\Omega\).
\end{lemma}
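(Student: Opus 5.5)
We will follow the Boccardo--Murat strategy for a.e. convergence of gradients, adapted to the top-order derivative and to the concentration set $\mathcal{A}_\sigma$ found above. Write $A(x,\xi)=|\xi|^{p-2}\xi+a(x)|\xi|^{q-2}\xi$, so that
\[
\langle E'(u_j),v\rangle=\int_\Omega A(x,\nabla^m u_j)\cdot\nabla^m v\,\mathrm{d}x-\int_\Omega \big(\mu|u_j|^{p^*-2}u_j+b|u_j|^{q^*-2}u_j+g(x,u_j)\big)v\,\mathrm{d}x .
\]
The goal is to show that for every compact $K\subset\Omega\setminus\mathcal{A}_\sigma$, with $\sigma$ small,
\[
\int_K \big(A(x,\nabla^m u_j)-A(x,\nabla^m u)\big)\cdot(\nabla^m u_j-\nabla^m u)\,\mathrm{d}x\to0 .
\]
Since $\mathcal{A}_\sigma$ is finite, hence of measure zero, strict monotonicity of $A(x,\cdot)$ then gives a.e. convergence on $K$. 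A diagonal argument over an exhaustion of $\Omega\setminus\mathcal{A}_\sigma$ finishes the proof.

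\textbf{Step 1: localization.} Fix $x_0\notin\mathcal{A}_\sigma$ and $r>0$ with $\eta(B_{2r}(x_0))<\sigma$, and a cutoff $\psi\in C_0^\infty(B_{2r}(x_0))$, $0\le\psi\le1$, $\psi=1$ on $B_r(x_0)$. Test $E'(u_j)$ with $v_j=\psi(u_j-u)\in W_0^{m,\mathcal{H}}(\Omega)$; this is legitimate since no truncation is involved. Because $(v_j)$ is bounded and $E'(u_j)\to0$, we get $\langle E'(u_j),v_j\rangle\to0$.

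\textbf{Step 2: expanding the principal part.} By the Leibniz rule \eqref{eq1.1},
\[
\nabla^m v_j=\psi\,\nabla^m(u_j-u)+\sum_{k=1}^m\binom mk\nabla^{m-k}(u_j-u)\,\nabla^k\psi .
\]
The lower-order terms contain only $\nabla^{i}(u_j-u)$ with $i\le m-1$. These converge to $0$ strongly in $L^p$, and in $L^q_a$ on the relevant sets. The reason is compactness of $W_0^{m,\mathcal{H}}\hookrightarrow W^{m-1,\mathcal{H}}$, which follows from the compact embedding theorem above applied to the derivatives, or from Rellich in $W^{m,p}_0$ combined with the $L^q$ bound coming from $q<p^*$ and $(N-1)q\le Np$. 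Paired with $A(x,\nabla^m u_j)$, which is bounded in $L^{p'}+L^{q'}_a$, these terms vanish by H\"older. Likewise $\int\psi A(x,\nabla^m u)\cdot\nabla^m(u_j-u)\to0$ by weak convergence.

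\textbf{Step 3: the nonlinear right-hand side (main obstacle).} It remains to show
\[
\int_\Omega \big(\mu|u_j|^{p^*-2}u_j+b|u_j|^{q^*-2}u_j\big)\psi(u_j-u)\,\mathrm{d}x\to0 .
\]
The $g$-term is handled by Proposition~\ref{prop3.3.12}, since it is subcritical in the sense of $\mathcal{C}$. The critical terms are the genuine difficulty. By H\"older, they are bounded by
\[
\Big(\int_{B_{2r}}\big(\mu|u_j|^{p^*}+b|u_j|^{q^*}\big)\Big)^{1-1/\theta}\,\|\psi(u_j-u)\|_{\theta}, \qquad \theta\in\{p^*,q^*\}.
\]
Here we use \eqref{eq3.3.6} and \eqref{eq3.3.8} applied to $\psi u_j$, together with $\eta(B_{2r})<\sigma$. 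This shows that $\limsup\int_{B_{2r}}(|u_j|^{p^*}+b|u_j|^{q^*})$ is controlled by $C\sigma^{\gamma}$ plus the corresponding integral for $u$, for some $\gamma>0$. The point is that a small energy measure forces the critical masses $\xi,\overline\xi$ to have no atoms there, so $|u_j|^{p^*}\to|u|^{p^*}$ locally, and then Brezis--Lieb gives $\|u_j-u\|_{L^{p^*}(B_r)}\to0$; the same holds for the $b$-weighted $q^*$ term.

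This yields $\limsup\int_{B_r}(A(\nabla^m u_j)-A(\nabla^m u))\cdot\nabla^m(u_j-u)\le C\sigma^\gamma$. If only this weaker bound is available, we will apply the Evans--Gariepy / Dal Maso--Murat variant instead: with $\theta\in(0,1)$, the integral of $\big[(A(\nabla^m u_j)-A(\nabla^m u))\cdot\nabla^m(u_j-u)\big]^\theta$ tends to $0$, which still gives a.e. convergence along a subsequence. Covering $\Omega\setminus\mathcal{A}_\sigma$ by countably many such balls concludes the argument.
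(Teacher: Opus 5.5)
Your outline follows the paper's own route: cut off away from the finite set $\mathcal{A}_\sigma$, test $E'(u_j)$ with $\psi(u_j-u)$, remove the Leibniz lower-order terms by compactness, use weak convergence for $\int\psi A(x,\nabla^m u)\cdot\nabla^m(u_j-u)$, and treat the critical terms by concentration--compactness. Your remark that $(N-1)q\le Np$ gives $q<Np/(N-p)$, hence $\nabla^i(u_j-u)\to0$ in $L^q$ for $i\le m-1$, is exactly what the $a$-weighted lower-order terms need. The gap is in Step~3, and the paper only asserts the same step (``by the definition of the singular set, $u_j\to u$ in $L^{p^*}_{\mathrm{loc}}$''). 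The reverse Sobolev inequalities coming from \eqref{eq3.3.6} and \eqref{eq3.3.8} only say that an atom of $\xi$ at a point $x_i$ with $\eta(\{x_i\})<\sigma$ has mass at most $S_p^{-p^*/p}\sigma^{p^*/p}$, and similarly for $\overline\xi$. So such atoms are small, not absent; for a merely bounded sequence, small multiples of a concentrating bubble placed where $a\equiv0$ give atoms of arbitrarily small energy. What is missing is the use of the equation at the atoms. Test $E'(u_j)$ with $\phi((\cdot-x_i)/\rho)\,u_j$ and let $j\to\infty$, then $\rho\to0$. This gives $\zeta(\{x_i\})+\overline\zeta(\{x_i\})\le\mu\,\xi(\{x_i\})+\overline\xi(\{x_i\})$. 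Combined with the reverse inequalities, it yields the dichotomy: either $\xi(\{x_i\})=\overline\xi(\{x_i\})=0$, or $\eta(\{x_i\})\ge c_*>0$ with $c_*$ depending on $\mu,\|b\|_\infty,a_0,S_p,S_q$. Your claim is true only once $\sigma\le c_*$. Showing that the Leibniz terms vanish as $\rho\to0$ here needs its own argument in the $a$-weighted part.

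Your fallback does not close the gap as written. Set $e_j=(A(x,\nabla^m u_j)-A(x,\nabla^m u))\cdot\nabla^m(u_j-u)\ge0$. With $\sigma$ fixed, the bound $\limsup_j\int_{B_r}e_j\le C\sigma^\gamma$ gives neither $\int_{B_r}e_j^\theta\to0$ nor a.e.\ convergence. It can be repaired, and the repaired version bypasses both $\mathcal{A}_\sigma$ and the dichotomy. Passing to the limit exactly (Brezis--Lieb) gives $\lim_j\int_\Omega\psi e_j=\mu\int\psi\,d\nu+\int\psi\,d\overline\nu$, where $\nu,\overline\nu$ are the weak-$*$ limits of $|u_j-u|^{p^*}$ and $b|u_j-u|^{q^*}$. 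The reverse Sobolev inequalities applied to $\phi(u_j-u)$, together with Lions' lemma, make $\nu$ and $\overline\nu$ purely atomic. Given $\delta>0$, pick a finite set $F$ of atoms with $(\mu\nu+\overline\nu)(\overline\Omega\setminus F)<\delta$. Take $0\le\psi\le1$ with $\psi\equiv1$ off $B_\rho(F)$ and $\psi$ vanishing near $F$, and split
\[
\int_\Omega e_j^\theta\le|\Omega|^{1-\theta}\Big(\int_{\Omega\setminus B_\rho(F)}e_j\Big)^\theta+|B_\rho(F)|^{1-\theta}\Big(\sup_k\int_\Omega e_k\Big)^\theta .
\]
Let $j\to\infty$, then $\rho\to0$, then $\delta\to0$. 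This gives $e_j^\theta\to0$ in $L^1(\Omega)$, hence $\nabla^m u_j\to\nabla^m u$ a.e.\ along a subsequence. The essential point is that the excised finite set must be allowed to grow, not fixed through a single $\sigma$.
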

\begin{proof}
From the weak convergence of $(u_j)_{j\in\mathbb{N}}$, we have the boundedness of $(u_j)_{j\in\mathbb{N}}$ in $W_0^{m,\mathcal{H}}(\Omega)$. Let $\epsilon_0 > 0$ be fixed and small enough such that $B_{\epsilon_0}(x_s)\cap B_{\epsilon_0}(x_k) = \emptyset$ if $s \ne k$, and let
\[
{{\Omega}}_{\epsilon_0} = \{ x \in {\overline{\Omega}} :~ \|x-x_k\| \ge \epsilon_0,\, k=1,2,\dots,\nu \},
\]
for some $\nu \in \mathbb N$.
Then we claim that
\[
\int_{{\Omega}_{\epsilon_0}} 
\left( |\nabla^m u_j|^{p-2}\nabla^m u_j - |\nabla^m u|^{p-2}\nabla^m u \right)
(\nabla^m u_j - \nabla^m u) \,\mathrm{d}x \to 0.
\]
For this, let $0<\epsilon<\epsilon_0$ and let $\varphi \in C_0^\infty(\mathbb{R}^n, [0,1])$ be such that $\varphi\equiv 1$ in $B_{1/2}(0)$ and $\varphi\equiv 0$ in ${\Omega}\setminus B_1(0)$. Taking 
\[
\psi_\epsilon(x) = 1 - \sum_{k=1}^\nu \varphi\left( \frac{x-x_k}{\epsilon} \right),
\]
we have $0 \le \psi_\epsilon \le 1$, $\psi_\epsilon \equiv 1$ in ${\Omega}_\epsilon = {\Omega} \setminus \bigcup_{k=1}^\nu B(x_k,\epsilon)$, $\psi_\epsilon \equiv 0$ in $\bigcup_{k=1}^\nu B(x_k, \epsilon/2)$ and ${(\psi_\epsilon u_j)_{j\in\mathbb{N}}}$ is a bounded sequence in $W_0^{m, \mathcal{H}}({{\Omega}})$.
Since ${E'(u_j)}\to 0$ so for any $v \in W_0^{m, \mathcal{H}}({{\Omega}})$, we have
\begin{align}\label{weakderivative}
\int_{\Omega} (|\nabla^m u_j|^{p-2}+a(x)|\nabla^m u_j|^{q-2} )\nabla^m u_j .\nabla^mv \,\mathrm{d}x
-\int_{\Omega} (\mu|u_j|^{p^*-2}u_j + b(x)|u_j|^{q^*-2}u_j+&g(x,u_j))v \,\mathrm{d}x\to 0
\end{align}
Putting $v = \psi_\epsilon u_j$, we get
\begin{align*}
\int_{\Omega} (|\nabla^m u_j|^{p-2}+a(x)|\nabla^m u_j|^{q-2} )\nabla^m u_j .\nabla^m(\psi_\epsilon u_j) \,\mathrm{d}x
-\int_{\Omega} (\mu|u_j|^{p^*-2}u_j + b(x)|u_j|^{q^*-2}u_j+&g(x,u_j))\psi_\epsilon u_j\,\mathrm{d}x\\\nonumber
&\le \epsilon_j \| \psi_\epsilon u_j \|,
\end{align*}
where $\epsilon_j>0$ is a constant, and it implies that
\begin{align}
\int_{\Omega}& |\nabla^m u_j|^{p} \psi_\epsilon \,\mathrm{d}x
+ \sum_{l=1}^m \binom{m}{l} \int_{\Omega} |\nabla^m u_j|^{p-2} \nabla^m u_j \nabla^l \psi_\epsilon \nabla^{m-l}u_j \,\mathrm{d}x\nonumber\\
+&\int_{\Omega}a(x) |\nabla^m u_j|^{q} \psi_\epsilon \,\mathrm{d}x
+ \sum_{l=1}^m \binom{m}{l} \int_{\Omega} a(x)|\nabla^m u_j|^{q-2} \nabla^m u_j \nabla^l \psi_\epsilon \nabla^{m-l}u_j \,\mathrm{d}x \label{ppart}\\
&- \int_{\Omega} (\mu|u_j|^{p^*-2}u_j + b(x)|u_j|^{q^*-2}u_j+g(x,u_j))\psi_\epsilon u_j \,\mathrm{d}x \le \epsilon_j \|\psi_\epsilon u_j\|. \nonumber
\end{align}
Now, \eqref{weakderivative} with $v=-\psi_\epsilon u$,
\begin{align*}
\int_{\Omega} (|\nabla^m u_j|^{p-2}+a(x)|\nabla^m u_j|^{q-2} )\nabla^m u_j \nabla^m(-\psi_\epsilon u ) \,\mathrm{d}x
+\int_{\Omega} (\mu|u_j|^{p^*-2}u_j + b(x)|u_j|^{q^*-2}u_j+&g(x,u_j))\psi_\epsilon u \,\mathrm{d}x\nonumber\\
&\le \epsilon_j \| \psi_\epsilon u\|,
\end{align*}
which implies that
\begin{align}
-\int_{\Omega}& |\nabla^m u_j|^{p-2}\nabla^m u_j\nabla^m u \psi_\epsilon \,\mathrm{d}x
- \sum_{l=1}^m \binom{m}{l} \int_{\Omega} |\nabla^m u_j|^{p-2} \nabla^m u_j \nabla^l \psi_\epsilon \nabla^{m-l}u \,\mathrm{d}x\nonumber\\
-&\int_{\Omega}a(x) |\nabla^m u_j|^{q-2}\nabla^m u_j\nabla^m u \psi_\epsilon\, \mathrm{d}x
- \sum_{l=1}^m \binom{m}{l} \int_{\Omega} a(x)|\nabla^m u_j|^{q-2} \nabla^m u_j \nabla^l \psi_\epsilon \nabla^{m-l}u\, \mathrm{d}x\label{qpart}\\
&+ \int_{\Omega} (\mu|u_j|^{p^*-2}u_j + b(x)|u_j|^{q^*-2}u_j+g(x,u_j))\psi_\epsilon u\, \mathrm{d}x \le \epsilon_j \|\psi_\epsilon u\|.\nonumber 
\end{align}
Using the strict convexity of function $t \mapsto |t|^k$, where integer $k>1$, we get
\[
0 \le \left( |\nabla^m u_j|^{p-2}\nabla^m u_j - |\nabla^m u|^{p-2}\nabla^m u \right) (\nabla^m u_j - \nabla^m u),
\]
and consequently
\[
0 \le \int_{{\Omega}_{\epsilon_0}}
    \left( |\nabla^m u_j|^{p-2}\nabla^m u_j - 
           |\nabla^m u|^{p-2}\nabla^m u \right) 
           (\nabla^m u_j - \nabla^m u)\, \mathrm{d}x
\]
\[
\le \int_{\Omega} \left( |\nabla^m u_j|^{p-2}\nabla^m u_j - 
           |\nabla^m u|^{p-2}\nabla^m u \right) 
           (\nabla^m u_j - \nabla^m u)\, \mathrm{d}x. 
\]
This can be written as
\begin{equation}\label{pconvex}
    0 \le \int_{\Omega} \left( |\nabla^m u_j|^{p} \psi_\epsilon 
- |\nabla^m u_j|^{p-2} \psi_\epsilon \nabla^m u_j \nabla^m u 
- |\nabla^m u|^{p-2} \psi_\epsilon \nabla^m u \nabla^m u_j 
+ |\nabla^m u|^{p} \psi_\epsilon \right)\, \mathrm{d}x.
\end{equation}
Similarly,
\begin{equation}\label{qconvex}
0 \le \int_{\Omega} a(x)\left( |\nabla^m u_j|^{q} \psi_\epsilon 
- |\nabla^m u_j|^{q-2} \psi_\epsilon \nabla^m u_j \nabla^m u 
- |\nabla^m u|^{q-2} \psi_\epsilon \nabla^m u \nabla^m u_j 
+ |\nabla^m u|^{q} \psi_\epsilon \right)\, \mathrm{d}x.
\end{equation}
From \eqref{ppart}, \eqref{qpart}, \eqref{pconvex} and \eqref{qconvex}, we obtain
\begin{align}\label{equationestimate}
0 \le &
- \int_{\Omega} |\nabla^m u_j|^{p} \psi_\epsilon\, \mathrm{d}x 
- \sum_{l=1}^m \binom{m}{l} \int_{\Omega} |\nabla^m u_j|^{p-2}\nabla^m u_j \nabla^l \psi_\epsilon \nabla^{m-l}u_j\, \mathrm{d}x \\\nonumber
&-\int_{\Omega}a(x) |\nabla^m u_j|^{q} \psi_\epsilon\, \mathrm{d}x
- \sum_{l=1}^m \binom{m}{l} \int_{\Omega} a(x)|\nabla^m u_j|^{q-2} \nabla^m u_j \nabla^l \psi_\epsilon \nabla^{m-l}u_j\, \mathrm{d}x\\\nonumber
& + \int_{\Omega} (\mu|u_j|^{p^*-2}u_j + b(x)|u_j|^{q^*-2}u_j+g(x,u_j))\psi_\epsilon u_j\, \mathrm{d}x+ \epsilon_j \|\psi_\epsilon u_j\| \\\nonumber
& + \int_{\Omega} |\nabla^m u_j|^{p-2}\nabla^m u_j \nabla^m u \psi_\epsilon\, \mathrm{d}x + \epsilon_j \|\psi_\epsilon u\| \\\nonumber
& + \sum_{l=1}^{m}\binom{m}{l} \int_{\Omega} |\nabla^m u_j|^{p-2}\nabla^m u_j \nabla^l\psi_\epsilon \nabla^{m-l}u\, \mathrm{d}x
 + \int_{\Omega}a(x) |\nabla^m u_j|^{q-2}\nabla^m u_j \nabla^m u \psi_\epsilon\, \mathrm{d}x\\\nonumber
 &+\sum_{l=1}^{m}\binom{m}{l} \int_{\Omega} a(x)|\nabla^m u_j|^{q-2}\nabla^m u_j \nabla^l\psi_\epsilon \nabla^{m-l}u\, \mathrm{d}x- \int_{\Omega} (\mu|u_j|^{p^*-2}u_j + b(x)|u_j|^{q^*-2}u_j+g(x,u_j))\psi_\epsilon u\, \mathrm{d}x\\\nonumber
& + \int_{\Omega} \left( |\nabla^m u_j|^{p}\psi_\epsilon
- |\nabla^m u_j|^{p-2}\psi_\epsilon \nabla^m u_j \nabla^m u
- |\nabla^m u|^{p-2}\psi_\epsilon \nabla^m u\nabla^m u_j
+ |\nabla^m u|^{p}\psi_\epsilon \right)\, \mathrm{d}x\\\nonumber
&+\int_{\Omega} a(x)\left( |\nabla^m u_j|^{q} \psi_\epsilon 
- |\nabla^m u_j|^{q-2} \psi_\epsilon \nabla^m u_j \nabla^m u 
- |\nabla^m u|^{q-2} \psi_\epsilon \nabla^m u \nabla^m u_j 
+ |\nabla^m u|^{q} \psi_\epsilon \right)\, \mathrm{d}x.
\end{align}
Therefore,
\begin{align}
0 \le
& \sum_{l=1}^m \binom{m}{l} \int_{\Omega} |\nabla^m u_j|^{p-2}\nabla^m u_j \nabla^l \psi_\epsilon \nabla^{m-l}(u-u_j)\, \mathrm{d}x
+ \epsilon_j \|\psi_\epsilon u_j\| \nonumber\\
&+\sum_{l=1}^m \binom{m}{l} \int_{\Omega}a(x) |\nabla^m u_j|^{q-2}\nabla^m u_j \nabla^l \psi_\epsilon \nabla^{m-l}(u-u_j)\, \mathrm{d}x+ \epsilon_j \|\psi_\epsilon u\|\label{equationestimate2}\\
& + \int_{\Omega} |\nabla^m u|^{p-2}\nabla^m u (\nabla^m u - \nabla^m u_j)\psi_\epsilon\, \mathrm{d}x
+ \int_{\Omega} a(x)|\nabla^m u|^{q-2}\nabla^m u (\nabla^m u - \nabla^m u_j)\psi_\epsilon\, \mathrm{d}x
\nonumber\\
&+ \int_{\Omega} (\mu|u_j|^{p^*-2}u_j + b(x)|u_j|^{q^*-2}u_j+g(x,u_j))\psi_\epsilon (u_j-u)\, \mathrm{d}x\nonumber.
\end{align}
Now, we estimate each integral in \eqref{equationestimate2} separately. For arbitrary $\delta > 0$, using the interpolation inequality $ab \leq \delta a^{p/(p-1)} + C_\delta b^{p}$ with $C_\delta = \delta^{1-p}$, for all $0\leq r\leq m-1$ and any $l$,
$$
\int_{\Omega} |\nabla^m u_j|^{p-2} \nabla^m u_j  \nabla^l\psi_\epsilon (\nabla^r u - \nabla^r u_j)\, \mathrm{d}x
\leq \delta\int_{\Omega} |\nabla^m u_j|^p\, \mathrm{d}x + C_\delta \int_{\Omega} |\nabla^l\psi_\epsilon|^p |\nabla^r u - \nabla^r u_j|^p\, \mathrm{d}x
$$
$$
\leq \delta K + C_\delta \left( \int_{\Omega} |\nabla^l\psi_\epsilon|^{pt}\, dt \right)^{1/t} \left( \int_{\Omega} |\nabla^r u - \nabla^r u_j|^{ps}\, \mathrm{d}x \right)^{1/s},
$$
where $1/s + 1/t = 1$ and the bound $\int_\Omega |\nabla^m u_{j_k}|^p \mathrm{d}x \leq \rho_{\mathcal{H}}(\nabla^m u_{j_k}) \leq K$ by using \eqref{eq3.3.4}  Thus, since for all $0 \leq r \leq m-1$, {$\nabla^r u_j \to \nabla^r u$ strongly in $L^{sp}({\Omega})$} and $\delta$ is arbitrary in above inequality, we obtain that
\begin{equation}\label{lessthan01}
\limsup_{j\to\infty} \int_{\Omega} |\nabla^m u_j|^{p-2} \nabla^m u_j  \nabla^l\psi_\epsilon (\nabla^r u - \nabla^r u_j)\, \mathrm{d}x \leq 0,
\end{equation}
for all $0 \leq r \leq m-1,$
 and similarly,
 \begin{equation}\label{lessthan02}
\limsup_{j\to\infty} \int_{\Omega} a(x)|\nabla^m u_j|^{q-2} \nabla^m u_j  \nabla^l\psi_\epsilon (\nabla^r u - \nabla^r u_j)\, \mathrm{d}x \leq 0,
\end{equation}
for all $0 \leq r \leq m-1.$
Using that $u_j \rightharpoonup u$ weakly in $W^{m,\mathcal{H}}_0({\Omega})$, we get
\begin{equation}\label{equaltto01}
\int_{\Omega} |\nabla^m u|^{p-2}\nabla^m u (\nabla^{m-l}u - \nabla^{m-l}u_j) \psi_\epsilon\, \mathrm{d}x \to 0 \qquad \text{as } j\to\infty.
\end{equation}
 Similarly, we can deduce 
 \begin{equation}\label{equaltto02}
\int_{\Omega} a(x)|\nabla^m u|^{q-2}\nabla^m u (\nabla^{m-l}u - \nabla^{m-l}u_j) \psi_\epsilon\, \mathrm{d}x \to 0 \qquad \text{as } j\to\infty.
\end{equation}
Now, we claim that
\begin{equation}\label{nonlinearity_limit}
\int_{\Omega} (\mu|u_j|^{p^*-2}u_j + b(x)|u_j|^{q^*-2}u_j + g(x,u_j))\psi_\epsilon (u_j-u)\, \mathrm{d}x \to 0 \quad \text{as } j \to \infty.
\end{equation}
By the definition of the singular set $\{x_1, \dots, x_\nu\}$, we have
\[
u_j \to u \quad \text{strongly in } L^{p^*}_{\text{loc}}({\Omega} \setminus \{x_1, \dots, x_\nu\}).
\]
Since the support of $\psi_\epsilon$ is a compact subset of ${\Omega} \setminus \{x_1, \dots, x_\nu\}$, it follows that
\[
\| u_j - u \|_{L^{p^*}(\text{supp}(\psi_\epsilon))} \to 0 \quad \text{as } j \to \infty.
\]
Applying H\"{o}lder's inequality to the first critical term in \eqref{nonlinearity_limit}, we obtain:
\begin{align}
\left| \int_{\Omega} \mu |u_j|^{p^*-2}u_j \psi_\epsilon (u_j-u) \, \mathrm{d}x \right| 
&\le  \mu \| u_j \|_{L^{p^*}(\Omega)}^{p^*-1} \| u_j - u \|_{L^{p^*}(\text{supp}(\psi_\epsilon))}. \label{est_crit_p}
\end{align}
Similarly, for the second critical term involving $b(x)$, using the weighted H\"{o}lder inequality:
\begin{align}
\left|\int_\Omega b(x) |u_j|^{q^*-2}u_j \psi_\epsilon (u_j-u) \,\mathrm{d}x\right| 
&\leq \left(\int_\Omega b(x)|u_j|^{q^*} \,\mathrm{d}x\right)^{\frac{q^*-1}{q^*}} \left(\int_{\text{supp}(\psi_\epsilon)} b(x)|u_j-u|^{q^*} \,\mathrm{d}x\right)^{\frac{1}{q^*}}. \label{est_crit_q}
\end{align}
By Proposition~\ref{prop3.4.13}, the sequence $(u_j)_{j\in\mathbb{N}}$ is bounded in $L^{p^*}(\Omega)$ and $L_b^{q^*}(\Omega)$. Furthermore, since the support of $\psi_\epsilon$ is disjoint from the singular set, the Concentration-Compactness Principle \cite{Lions-1985} implies that $u_j \to u$ strongly in $L^{p^*}_{\text{loc}}$ and $L^{q^*}_{b, \text{loc}}$ on the set $\Omega \setminus \{x_1, \dots, x_\nu\}$. Consequently,
\[
\| u_j - u \|_{L^{p^*}(\text{supp}(\psi_\epsilon))} \to 0 \quad \text{and} \quad \int_{\text{supp}(\psi_\epsilon)} b(x)|u_j - u|^{q^*} \,\mathrm{d}x \to 0.
\]
Combining this with estimates \eqref{est_crit_p} and \eqref{est_crit_q}, both integrals vanish as $j \to \infty$.

Again applying Hölder's inequality to the last term of \eqref{nonlinearity_limit}, we get
\begin{align}
\int_{\Omega} g(x, u_j) \psi_\epsilon (u_j - u)  \mathrm{d}x 
&\leq \int_{\Omega} \left( c_1 + c_2 |u_j|^{r-1} + c(x)|u_j|^{s-1} \right) \psi_\epsilon|u_j - u|  \mathrm{d}x \notag\\
&\leq c_1 \int_{\Omega} \psi_\epsilon|u_j - u|  \mathrm{d}x \notag \\
&\quad + c_2 \left( \int_{\Omega} \psi_\epsilon^{\frac{r}{r-1}}|u_j|^{r}  \mathrm{d}x \right)^{\frac{r-1}{r}} \left( \int_{\Omega} |u_j - u|^{r}  \mathrm{d}x \right)^{\frac{1}{r}} \nonumber \\
&\quad + \left( \int_{\Omega}\psi_\epsilon^{\frac{s}{s-1}} c(x) |u_j|^{s}  \mathrm{d}x \right)^{\frac{s-1}{s}} \left( \int_{\Omega} c(x) |u_j - u|^{s}  \mathrm{d}x \right)^{\frac{1}{s}}. \label{eq:g_estimate_final}
\end{align}
By Proposition \ref{prop3.3.12}, the embedding $W_0^{m,\mathcal{H}}(\Omega) \hookrightarrow L^{\mathcal{C}}(\Omega)$ is compact. Since $(u_j)_{j\in\mathbb{N}}$ is bounded in $W_0^{m,\mathcal{H}}(\Omega)$, this implies the strong convergence:
\[
u_j \to u \quad \text{strongly in } L^{r}(\Omega) ~\text{and}~u_j \to u \quad \text{strongly in }L^{s}_{c}(\Omega).
\]
Consequently, $\|u_j - u\|_{L^r(\Omega)} \to 0$ and $|u_j - u|_{s,c} \to 0$ as $j \to \infty$. Therefore, every term in \eqref{eq:g_estimate_final} vanishes. So \eqref{nonlinearity_limit} holds.

Hence, from \eqref{equationestimate2}, \eqref{lessthan01}, \eqref{lessthan02}, \eqref{equaltto01}, \eqref{equaltto02} and \eqref{nonlinearity_limit} we have
$$
\limsup_{j\to\infty} \sum_{l=1}^m \binom{m}{l} \int_{\Omega} |\nabla^m u_j|^{p-2} \nabla^m u_j \cdot \nabla^l\psi_\epsilon (\nabla^{m-l}u - \nabla^{m-l}u_j)\, \mathrm{d}x = 0.
$$
and
$$
\limsup_{j\to\infty} \sum_{l=1}^m \binom{m}{l} \int_{\Omega} a(x)|\nabla^m u_j|^{q-2} \nabla^m u_j \cdot \nabla^l\psi_\epsilon (\nabla^{m-l}u - \nabla^{m-l}u_j)\, \mathrm{d}x = 0.
$$
Now, using $\langle E'(u_j), \psi_\epsilon(u_j-u)\rangle \to 0$, we conclude
\begin{align}
\int_{{\Omega}_{\epsilon_0}} 
&\left( |\nabla^m u_j|^{p-2}\nabla^m u_j - |\nabla^m u|^{p-2}\nabla^m u \right)
(\nabla^m u_j - \nabla^m u)\, \mathrm{d}x\\
&+\int_{{\Omega}_{\epsilon_0}} a(x)\left( |\nabla^m u_j|^{q-2}\nabla^m u_j - |\nabla^m u|^{q-2}\nabla^m u \right)
(\nabla^m u_j - \nabla^m u)\, \mathrm{d}x \to 0.
\end{align}
Finally, using this, since $\epsilon_0$ is arbitrary, we obtain that
$$
\nabla^m u_j(x) \to \nabla^m u(x) \quad \text{a.e. in } {\Omega},
$$
which finishes the proof.
\end{proof}
\begin{lemma}\label{lemmaweak-solution}
Let $(u_j)_{j\in\mathbb{N}} \subset W_0^{m,\mathcal{H}}(\Omega)$ satisfy $u_j \rightharpoonup u$ weakly and $E'(u_j) \to 0$. Then $u$ is a weak solution of \eqref{3.2.8}.
\end{lemma}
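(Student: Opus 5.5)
The plan is to pass to the limit $j\to\infty$ in the identity $\langle E'(u_j),v\rangle\to 0$ for a fixed test function $v\in W_0^{m,\mathcal{H}}(\Omega)$, handling the principal part and the right-hand side separately. Since $u_j\rightharpoonup u$, the sequence is bounded in $W_0^{m,\mathcal{H}}(\Omega)$. By Proposition~\ref{pro3.3.9}, a subsequence satisfies $u_j\to u$ in $L^\theta(\Omega)$ for $\theta<p^*$ and a.e. in $\Omega$. By Lemma~\ref{lemmaweak-convergence}, after a further subsequence, $\nabla^m u_j\to\nabla^m u$ a.e. in $\Omega$. These two a.e. convergences are the key inputs. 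Since the limit identity concerns only $u$, working along a subsequence is harmless.

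\emph{Principal part.} By \eqref{eq3.3.4}, $\int_\Omega |\nabla^m u_j|^p\,\mathrm{d}x$ and $\int_\Omega a(x)|\nabla^m u_j|^q\,\mathrm{d}x$ are bounded. Hence $|\nabla^m u_j|^{p-2}\nabla^m u_j$ is bounded in $L^{p'}(\Omega)$, and $a(x)^{1/q'}|\nabla^m u_j|^{q-2}\nabla^m u_j$ is bounded in $L^{q'}(\Omega)$. Both converge a.e. to the corresponding expressions for $u$. By the standard lemma that a bounded sequence in $L^r$ ($r>1$) converging a.e. converges weakly in $L^r$ to its a.e. limit, they converge weakly to the corresponding expressions in $u$. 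For $v\in W_0^{m,\mathcal{H}}(\Omega)$ we have $\nabla^m v\in L^p(\Omega)$ and $a^{1/q}\nabla^m v\in L^q(\Omega)$. Writing $a|\nabla^m u_j|^{q-2}\nabla^m u_j\cdot\nabla^m v=(a^{1/q'}|\nabla^m u_j|^{q-2}\nabla^m u_j)\cdot(a^{1/q}\nabla^m v)$, the left-hand side of \eqref{weaksolution} for $u_j$ converges to that for $u$.

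\emph{Right-hand side.} The same principle applies. The sequence $|u_j|^{p^*-2}u_j$ is bounded in $L^{(p^*)'}(\Omega)$ by \eqref{eq3.3.6} and converges a.e., so it converges weakly to $|u|^{p^*-2}u$, and we test against $v\in L^{p^*}(\Omega)$. Likewise $b^{1/(q^*)'}|u_j|^{q^*-2}u_j$ is bounded in $L^{(q^*)'}(\Omega)$ by Proposition~\ref{prop3.4.13}, and we pair it with $b^{1/q^*}v\in L^{q^*}(\Omega)$. For $g(x,u_j)v$ we use \eqref{3.2.9}. The terms $c_1+c_2|u_j|^{r-1}$ converge strongly in $L^{r'}(\Omega)$ since $r<p^*$. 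The term $c(x)|u_j|^{s-1}$ is handled by the compact embedding of Proposition~\ref{prop3.3.12}, which gives $u_j\to u$ in $L^s_c(\Omega)$. Vitali's theorem, or the same weak-compactness argument with the weight $c^{1/s'}$, then yields $\int_\Omega g(x,u_j)v\,\mathrm{d}x\to\int_\Omega g(x,u)v\,\mathrm{d}x$, using $c^{1/s}v\in L^s(\Omega)$. The latter holds because $W_0^{m,\mathcal{H}}(\Omega)\hookrightarrow L^{\mathcal{C}}(\Omega)$.

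Combining these limits with $\langle E'(u_j),v\rangle\to0$ gives \eqref{weaksolution} for every $v$, so $u$ is a weak solution of \eqref{3.2.8}. I expect the main obstacle to be the principal part. Without Lemma~\ref{lemmaweak-convergence}, weak convergence alone does not identify the weak limit of the nonlinear flux, as noted in item (c) of the introduction. With the a.e. convergence of $\nabla^m u_j$ available, the only delicate point is the weighted $a(x)$ term, which is resolved by splitting the weight as $a^{1/q'}\cdot a^{1/q}$.
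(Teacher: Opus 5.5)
Your proposal is correct and follows essentially the same route as the paper. Both arguments combine the a.e.\ convergence of $\nabla^m u_j$ from Lemma~\ref{lemmaweak-convergence} with the $L^{p'}$ and weighted $L^{q'}$ bounds to get weak convergence of the fluxes: the paper cites Autuori--Pucci's Proposition~A.8-(i) with weights $\chi_\Omega$ and $a(x)$, which is your ``bounded plus a.e.\ implies weak'' lemma together with the splitting $a=a^{1/q'}a^{1/q}$, and both treat the right-hand side the same way. Your handling of the $g$-term, via the compact embedding into $L^{\mathcal{C}}(\Omega)$ and Vitali's theorem, is more explicit than the paper's one-line appeal to the continuity of $g(x,\cdot)$ and \eqref{3.2.9}.
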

\begin{proof}
From weak convergence of $(u_j)_{j\in\mathbb{N}}$, we have boundedness of $(u_j)_{j\in\mathbb{N}}$ in $W_0^{m,\mathcal{H}}(\Omega)$.  For all test functions $v \in W_0^{m,\mathcal{H}}(\Omega)$, we have
\begin{align}\label{eq3.4.8}
\int_{\Omega} 
&\Big[ |\nabla^m u_j|^{p-2}\nabla^m u_j \cdot \nabla^m v + a(x)|\nabla^m u_j|^{q-2}\nabla^m u_j \cdot \nabla^m v \notag\\
&- \mu |u_j|^{p^*-2}u_j v - b(x)|u_j|^{q^*-2}u_j v-g(x,u_j)v \Big]\,\mathrm{d}x = o_j(1) 
\end{align}
as $j \to \infty$ so we aim to establish convergence of each term separately in \eqref{eq3.4.8}. Consider an arbitrary subsequence $(u_{j_k})_{j\in\mathbb{N}}$ of $(u_j)_{j\in\mathbb{N}}$ in $W_0^{m,\mathcal{H}}(\Omega)$. By the embedding $W_0^{m,\mathcal{H}}(\Omega) \hookrightarrow W_0^{m,p}(\Omega)$ and boundedness of $(u_j)_{j\in\mathbb{N}}$ in $W_0^{m,\mathcal{H}}(\Omega)$, we derive that $\|\nabla^m u_{j_k}\|_p$ is uniformly bounded and  Lemma \ref{lemmaweak-convergence} provides $\nabla^m u_{j_k} \to \nabla^m u$ a.e. up to a subsequence. Applying \cite[Proposition A.8-(i)]{Autuori-Pucci-2013}  with weight $w = \chi _{\Omega}$ yields
\[
|\nabla^m u_{j_k}|^{p-2}\nabla^m u_{j_k} \rightharpoonup |\nabla^m u|^{p-2}\nabla^m u \quad \text{in } (L^{p'}(\Omega))^{N}\quad \text{when m is odd,}
\]
and
\[
|\nabla^m u_{j_k}|^{p-2}\nabla^m u_{j_k} \rightharpoonup |\nabla^m u|^{p-2}\nabla^m u \quad \text{in } L^{p'}(\Omega)\quad \text{when m is even},
\]
where $p'$ denotes the conjugate of $p$. The subsequence arbitrariness implies the entire sequence \( (|\nabla ^m u_j|^{p-2}\nabla ^m u_j) \) exhibits weak convergence to \( |\nabla ^m u|^{p-2}\nabla ^m u \) within $(L^{p'}(\Omega))^{N}$, when $m$ is odd and  within $L^{p'}(\Omega)$, when $m$ is even . Thus for any $v \in W_0^{m,\mathcal{H}}(\Omega)$
\begin{align}\label{eq3.4.9}
\int_{\Omega} |\nabla^m u_j|^{p-2}\nabla^m u_j \cdot \nabla^m v  \mathrm{d}x \to \int_{\Omega} |\nabla^m u|^{p-2}\nabla^m u \cdot \nabla^m v  \mathrm{d}x\quad\text{as}\quad j\to\infty. 
\end{align}
Since $\nabla^mv\in (L^{\mathcal{H}}(\Omega))^{N}\subset (L^{p}(\Omega))^{N}$, when $m$ is odd and $\nabla^mv\in L^{\mathcal{H}}(\Omega)\subset L^{p}(\Omega)$, when $m$ is even. Similarly, the bound $\int_\Omega a(x)|\nabla^m u_{j_k}|^q \mathrm{d}x \leq \rho_{\mathcal{H}}(\nabla^m u_{j_k}) \leq C$ by using \eqref{eq3.3.4} and a.e. convergence yield, via \cite[Proposition A.8-(i)]{Autuori-Pucci-2013} with weight $w = a(x)\chi_{\Omega}$
\[
|\nabla^m u_{j_k}|^{q-2}\nabla^m u_{j_k} \rightharpoonup |\nabla^m u|^{q-2}\nabla^m u \quad \text{in } (L^{q'}_a(\Omega))^{N} \quad \text{when m is odd, }
\]
and 
\[
|\nabla^m u_{j_k}|^{q-2}\nabla^m u_{j_k} \rightharpoonup |\nabla^m u|^{q-2}\nabla^m u \quad \text{in } L^{q'}_a(\Omega)\quad \text{when m is even, }
\]
where $q'$ denotes the conjugate of $q$.
Consequently, the entire sequence \( (|\nabla ^m u_j|^{q-2}\nabla ^m u_j) \) exhibits weak convergence to \( |\nabla ^m u|^{q-2}\nabla ^m u \) within $(L^{q'}_a(\Omega))^{N}$, when $m$ is odd and within $L^{q'}_a(\Omega)$, when $m$ is even. For arbitrary $v \in W_0^{m,\mathcal{H}}(\Omega)$, we have
\begin{equation}\label{eq3.4.10}
    \int_{\Omega} a(x)|\nabla ^m u_j|^{q-2}\nabla ^m u_j \cdot \nabla ^m v  \mathrm{d}x \to \int_{\Omega} a(x)|\nabla ^m u|^{q-2}\nabla ^m u \cdot \nabla ^m v  \mathrm{d}x \quad \text{as } j \to \infty,
\end{equation}
which follows from the inclusion $\nabla ^m v \in (L^\mathcal{H}(\Omega))^{N} \subset (L^{q}_a(\Omega))^{N}$ when $m$ is odd and $\nabla ^m v \in L^\mathcal{H}(\Omega) \subset L^{q}_a(\Omega)$ when $m$ is even.
Similarly, using the continuous embedding $W_0^{m,\mathcal{H}}(\Omega) \hookrightarrow L^{p^*}(\Omega)$ and the weak convergence (hence boundedness) of $\left(u_j\right)_{j\in\mathbb{N}}$ in $W_0^{m,\mathcal{H}}(\Omega)$, the sequence $(u_{j_k})_{j\in\mathbb{N}}$ remains bounded in $L^{p^*}(\Omega)$. Selecting an appropriate subsequence yields $u_{j_k} \to u$ a.e. in $\Omega$ as $j\to\infty$. Applying reference \cite[Proposition A.8-(i)]{Autuori-Pucci-2013},  with $w = \chi_\Omega$ and noting subsequence arbitrariness, we deduce $|u_j|^{p^*-2}u_j \rightharpoonup|u|^{p^*-2}u$ in $L^{(p^*)'}(\Omega)$. Thus, for each $v \in W_0^{m,\mathcal{H}}(\Omega)$,
\begin{align}\label{eq3.4.11}
    \int_{\Omega} |u_j|^{p^*-2}u_jv  \mathrm{d}x \to \int_{\Omega} |u|^{p^*-2}uv  \mathrm{d}x \quad \text{as } j \to \infty.
\end{align}
Parallel reasoning establishes that for every $v \in W_0^{m,\mathcal{H}}(\Omega)$,
\begin{align}\label{eq3.4.12}
\int_{\Omega} b(x)|u_j|^{q^*-2}u_jv  \mathrm{d}x \to \int_{\Omega} b(x)|u|^{q^*-2}uv  \mathrm{d}x \quad \text{as } j \to \infty.
\end{align}
Finally, combining the continuity of $g(x, \cdot)$ with assumption \eqref{3.2.9}, the $g(x, u_j)$ term converges such that for all $v \in W_0^{m,\mathcal{H}}(\Omega)$,
\begin{align}\label{eq3.4.13}
\int_{\Omega} g(x, u_j)v  \mathrm{d}x \to \int_{\Omega} g(x, u)v  \mathrm{d}x \quad \text{as } j \to \infty.
\end{align}
Combining \cref{eq3.4.9,eq3.4.10,eq3.4.11,eq3.4.12,eq3.4.13},
 we pass the limit in \eqref{eq3.4.8} to achieve the required outcome.
\end{proof}
Now we provide the proofs of Propositions \ref{propcompactness} and \ref{propasymptotics}.
\begin{proof}[\bf{Proof of Proposition \ref{propcompactness}}]
 Consider a (PS)$_\beta$ sequence $(u_j)_{j\in\mathbb{N}}\subset W_0^{m,\mathcal{H}}(\Omega)$ satisfying
\begin{equation}\label{eq3.4.14}
    E(u_j) = \int_{\Omega} \left( \frac{1}{p} |\nabla ^m u_j|^p + \frac{a(x)}{q} |\nabla ^m u_j|^q - \frac{\mu}{p^*} |u_j|^{p^*} - \frac{b(x)}{q^*} |u_j|^{q^*} - G(x, u_j) \right)  \mathrm{d}x = \beta + o_j(1),
\end{equation}
and
\begin{equation}\label{eq3.4.15}
E'(u_j) u_j = \int_{\Omega} \left( |\nabla ^m u_j|^p + a(x)|\nabla ^m u_j|^q - \mu |u_j|^{p^*} - b(x)|u_j|^{q^*} - u_j g(x, u_j) \right) \mathrm{d}x = o_j(\|u_j\|). 
\end{equation}
dividing \eqref{eq3.4.15} by $\sigma$ (defined in \eqref{3.2.11}), subtracting from \eqref{eq3.4.14}, and using \eqref{3.2.11} with $\sigma < q^*$ yields
\[\left( \frac{1}{p} - \frac{1}{\sigma} \right) \int_{\Omega} |\nabla ^m u_j|^p \mathrm{d}x + \left( \frac{1}{q} - \frac{1}{\sigma} \right) \int_{\Omega} a(x)|\nabla ^m u_j|^q \mathrm{d}x \leq o(\|u_j\|) + \beta + c_3 |\Omega| + o_j(1).\]
This inequality combined with \eqref{eq3.3.4} establishes boundedness of $(u_j)_{j\in\mathbb{N}}$. By reflexivity of $W_0^{m,\mathcal{H}}(\Omega)$, up to a subsequence (still denoted by $(u_j)_{j\in\mathbb{N}}$), $(u_j)_{j\in\mathbb{N}}$ converges weakly to some $u \in W_0^{m,\mathcal{H}}(\Omega)$. Lemma \ref{lemmaweak-solution} confirms that $u$ weakly solves \eqref{3.2.8}.
To establish nontriviality, on contrary, we assume $u \equiv 0$. From Proposition \ref{prop3.3.12} and growth condition \eqref{3.2.9}, we already have
\begin{equation}\label{eq3.4.16}
    \int_{\Omega} G(x, u_j) \mathrm{d}x \to 0, \quad\text{and}~~ \int_{\Omega} u_j g(x, u_j) \mathrm{d}x \to 0~~~\text{as}~j\to\infty.
\end{equation}
This reduces \eqref{eq3.4.14} and \eqref{eq3.4.15} to
\begin{equation}\label{eq3.4.17}
\int_{\Omega} \left( \frac{1}{p} |\nabla ^m u_j|^p + \frac{a(x)}{q} |\nabla ^m u_j|^q - \frac{\mu}{p^*} |u_j|^{p^*} - \frac{b(x)}{q^*} |u_j|^{q^*} \right) \mathrm{d}x = \beta + o_j(1) 
\end{equation}
and
\begin{equation}\label{eq3.4.18}
\int_{\Omega} \left( |\nabla ^m u_j|^p + a(x)|\nabla ^m u_j|^q - \mu |u_j|^{p^*} - b(x)|u_j|^{q^*} \right) \mathrm{d}x = o(1), 
\end{equation}
respectively. On the other hand, boundedness of $(u_j)_{j\in\mathbb{N}}$ and properties \eqref{eq3.3.4}, \eqref{eq3.3.6}, \eqref{eq3.3.7} ensure existence of nonnegative limits
\[\int_{\Omega} |\nabla ^m u_j|^p \mathrm{d}x \to X, \quad \int_{\Omega} a(x)|\nabla ^m u_j|^q \mathrm{d}x \to Y, \quad \mu \int_{\Omega} |u_j|^{p^*} \mathrm{d}x \to Z, \quad \int_{\Omega} b(x) |u_j|^{q^*} \mathrm{d}x \to W,\]
for a relabeled subsequence and some $X,Y,Z,W\geq 0$. We recall the definition \eqref{eq3.2.12} of $I$ and use \eqref{eq3.4.17}, which gives
\begin{equation}\label{eq3.4.19}\beta = I(X, Y, Z, W),
\end{equation}
It confirms \eqref{eq3.2.14} holds, while \eqref{eq3.4.18} gives us  second part of \eqref{eq3.2.14}. Now, we recall the estimate 
\begin{equation*}
    \mu \int_{\Omega} |u_j|^{p^*} \mathrm{d}x \leq \frac{\mu}{S_p^{\frac{p^*}{p}}} \left( \int_{\Omega} |\nabla ^m u_j|^p \mathrm{d}x \right)^{\frac{p^*}{p}},
    \end{equation*}
using inequality \eqref{eq3.3.6} which on passage to the limit says that third inequality of \eqref{eq3.2.14} holds for $Z$ and $X$. Furthermore, from \eqref{eq3.3.8}
we derive
\begin{equation}\label{eq3.4.22*}
\int_{\Omega} b(x) |{u_j}|^{q^*}  \mathrm{d}x \leq \frac{\|b\|_{\infty}}{(a_0 S_q)^{\frac{q^*}{q}}} 
\left( (1 + \epsilon) \int_{\Omega} a(x) |{\nabla^m u_j}|^q  \mathrm{d}x + C_\epsilon \int_{U} |{u_j}|^q  \mathrm{d}x \right)^{\frac{q^*}{q}}.
\end{equation}
Since $u_j \rightharpoonup 0$ weakly in $W_0^{m,\mathcal{H}}(\Omega)$ as $j\to\infty$, Proposition \ref{pro3.3.9}-(b) with $r = q < p^*$ implies $u_j \to 0$ strongly in $L^q(U)$. Now passing to the limit in \eqref{eq3.4.22*} as $j \to \infty$, we obtain
\begin{equation}\label{eq3.4.23*}
W \leq \frac{\|b\|_{\infty}}{(a_0 S_q)^{\frac{q^*}{q}}} \left((1 + \epsilon) Y\right)^{\frac{q^*}{q}}.
\end{equation}
Again passing limit $\epsilon \to 0$ in \eqref{eq3.4.23*} confirms the last inequality in \eqref{eq3.2.14} for $W$ and $Y$. Thus $(X, Y, Z, W) \in S(\mu, \|b\|_{\infty})$, and \eqref{eq3.4.19} gives $\beta \geq \beta^* (\mu, \|b\|_{\infty})$, contradicting our initial assumption. Hence, $u$ is a non trivial weak solution of \eqref{3.2.8}.
\end{proof}
\begin{proof}[\bf{Proof of Proposition \ref{propasymptotics}}]
Let $\{(X_j, Y_j, Z_j, W_j)\} \subset S(\mu, \|b\|_{\infty})$ be a minimizing sequence for $\beta^* (\mu, \|b\|_{\infty})$. From  \eqref{eq3.2.14}, we obtain
\begin{equation}\label{eq3.4.22}
\begin{split}
(X_j + Y_j) \left( 1 - \frac{\mu}{S_p^{\frac{p^*}{p}}} X_j^{\frac{p^*}{p} - 1} - \frac{\|b\|_{\infty}}{(a_0 S_q)^{\frac{q^*}{q}}} Y_j^{\frac{q^*}{q} - 1} \right) &\leq X_j + Y_j - \frac{\mu}{S_p^{\frac{p^*}{p}}} X_j^{\frac{p^*}{p}} - \frac{\|b\|_{\infty}}{(a_0 S_q)^{\frac{q^*}{q}}} Y_j^{\frac{q^*}{q}} \\
&\leq X_j + Y_j - Z_j - W_j = 0.
\end{split}
\end{equation}
If $X_j + Y_j = 0$, then \eqref{eq3.2.14} implies $Z_j + W_j = 0$, so all terms vanish and $I(X_j, Y_j, Z_j, W_j) = 0$, contradicting \eqref{eq3.2.14}. Thus $X_j + Y_j > 0$, and from \eqref{eq3.4.22}, we obtain
\begin{equation}\label{eq3.4.23}
\frac{\mu}{S_p^{\frac{N}{N-mp}}} X_j^{mp/(N-mp)} + \frac{\|b\|_{\infty}}{(a_0 S_q)^{N/(N-mq)}} Y_j^{mq/(N-mq)} \geq 1,
\end{equation}
{dividing  first part of \eqref{eq3.2.14}$_j$ by $p^*$ and subtracted from \eqref{eq3.2.12}$_j$ with \eqref{eq3.4.23}} yields
\begin{align}\label{eq3.4.24}
I(\bf{X_j}) &= \frac{m}{N} X_j + \left( \frac{1}{q} - \frac{1}{p^*} \right) Y_j + \left( \frac{1}{p^*} - \frac{1}{q^*} \right) W_j \nonumber \\
&\geq \frac{m}{N}\frac{S_p^{N/mp}}{\mu^{\frac{N-mp}{mp}}} \left( 1 - \frac{\|b\|_{\infty}}{(a_0 S_q)^{N/(N-mq)}} Y_j^{mq/(N-mq)} \right)^{\frac{N-mp}{mp}}, 
\end{align}
where $\mathbf{X}_j = (X_j, Y_j, Z_j, W_j)$ and $p < q < p^* < q^*$. Since $S(\mu, 0) \subset S(\mu, \|b\|_{\infty})$, we have $\beta^* (\mu, \|b\|_{\infty}) \leq \beta^* (\mu, 0)$, so $I(\mathbf{X}_j)$ is uniformly bounded with respect to $\|b\|_{\infty} \geq 0$, for fixed $\mu > 0$. Now, if we take large $j$, then
\[
\left( \frac{1}{q} - \frac{1}{p^*} \right) Y_j \leq I(\mathbf{X}_j) \leq \beta^*(\mu, \|b\|_{\infty}) + 1 \leq \beta^*(\mu, 0) + 1,
\]
establishing uniform boundedness of $Y_j$ with respect to $\|b\|_{\infty}$ which implies that as $\|b\|_\infty \to 0^+$ in \eqref{eq3.4.24}, inequality \eqref{eq3.2.18} follows.
Similarly, dividing first part of \eqref{eq3.2.14}$_j$ by $q^*$ and subtracted from \eqref{eq3.2.12}$_j$ with \eqref{eq3.4.23} and \eqref{eq3.2.14} gives
\begin{align}\label{eq3.4.25}
I(\mathbf{X}_j) &= \left( \frac{1}{p} - \frac{1}{q^*} \right) X_j + \frac{m}{N} Y_j - \left( \frac{1}{p^*} - \frac{1}{q^*} \right) Z_j \nonumber \\
&\geq \frac{m}{N} \frac{(a_0 S_q)^{\frac{N}{mq}}}{\|b\|_{\infty}^{\frac{N-mq}{mq}}} \left( 1 - \frac{\mu}{S_p^{\frac{N}{N-mp}}} X_j^{\frac{mp}{(N-mp)}} \right)^{\frac{(N-mq)}{mq}} - \left( \frac{1}{p^*} - \frac{1}{q^*} \right) \frac{\mu}{S_p^{\frac{p^*}{p}}} X_j^{\frac{p^*}{p}}. 
\end{align}
Since $S(0, \|b\|_{\infty}) \subset S(\mu, \|b\|_{\infty})$,  we have $\beta^* (\mu, \|b\|_{\infty}) \leq \beta^* (0, \|b\|_{\infty})$, so $I(\mathbf{X}_j)$ is uniformly bounded in $\mu \geq 0$ for fixed $\|b\|_{\infty} > 0$, so the first equality in \eqref{eq3.4.25} implies uniform boundedness of $X_j$ in $\mu$ which implies that as $\mu \to 0^+$ in \eqref{eq3.4.25}, inequality \eqref{eq3.2.19} follows. This completes the proof.
\end{proof}
\section{Proof of existence results }
This section presents the proof of our main results, outlining the conditions under which our main problem possesses a weak solution. We begin by performing analysis to establish the proof of Theorem \ref{thm3.2.1}.
The variational functional for \eqref{3.2.1} is
\begin{equation}\label{jfunctional}
J(u) = \int_{\Omega} \left( \frac{1}{p} |{\nabla ^m u}|^p + \frac{a(x)}{q} |{\nabla ^m u}|^q - \frac{\lambda}{r} |{u}|^r - \frac{\mu}{p^*} |{u}|^{p^*} - \frac{b(x)}{q^*} |{u}|^{q^*} \right) \mathrm{d}x,
\end{equation}
for $u \in W_0^{m,\mathcal{H}}(\Omega)$. Based on values of $\lambda$, the origin constitutes a strict local minimizer of the functional $J$ for the following reasons:\\
\textbf{Case(I)}: When $r = p$, inequalities \eqref{eq3.2.2star}, \eqref{eq3.3.6}, and \eqref{eq3.3.7} yield
\begin{align}\label{eq3.5.2}
J(u) &\geq \frac{1}{p} \left( 1 - \frac{\lambda}{\lambda_1(p)} \right) \int_{\Omega} |{\nabla ^m u}|^p  \mathrm{d}x + \frac{1}{q} \int_{\Omega} a(x) |{\nabla ^m u}|^q  \mathrm{d}x \nonumber \\
&\quad - \frac{\mu}{p^* S_p^{\frac{p^*}{p}}} \left( \int_{\Omega} |{\nabla ^m u}|^p  \mathrm{d}x \right)^{\frac{p^*}{p}} - \frac{\kappa}{q^*} \left( \int_{\Omega} a(x) |{\nabla ^m u}|^q  \mathrm{d}x \right)^{\frac{q^*}{q}}. 
\end{align}
For $\|u\| \leq 1$, inequality \eqref{eq3.3.4} implies
\begin{equation*}
\|u\|^q \leq \int_{\Omega} \left( |{\nabla ^m u}|^p + a(x) |{\nabla ^m u}|^q \right) \mathrm{d}x \leq \|u\|^p.
\end{equation*}
With $0 < \lambda < \lambda_1(p)$, equation \eqref{eq3.5.2} becomes
\begin{equation*}
J(u) \geq \min \left\{ \frac{1}{p} \left( 1 - \frac{\lambda}{\lambda_1(p)} \right), \frac{1}{q} \right\} \|u\|^q - \frac{\mu}{p^* S_p^{\frac{p^*}{p}}} \|u\|^{p^*} - \frac{\kappa}{q^*} \|u\|^{p \frac{q^*}{q}}.
\end{equation*}
Since $q < p^* <  \frac{pq^*}{q}$, the origin is a strict local minimizer of $J$, when $0 < \lambda < \lambda_1(p)$.\\
\textbf{Case(II)}: When $r > p$,
similar arguments using the embedding $W_0^{m,\mathcal{H}}(\Omega) \hookrightarrow L^r(\Omega)$ (Proposition \ref{pro3.3.9}-(b)) establish that the origin constitutes a strict local minimizer of the functional $J$ for all $\lambda > 0$. 

On the other hand, for any nonzero $u \in W^{m,\mathcal{H}}_0(\Omega)$, we observe $J(tu) \to -\infty$ as $t \to +\infty$. This establishes the mountain pass geometry for $J$, and we define the mountain pass level as
\begin{equation*}\label{eqbeta}
\beta = \inf_{\gamma \in \Gamma} \max_{u \in \gamma([0,1])} J(u) > 0,
\end{equation*}
where the path space $\Gamma$ consists of continuous curves connecting the origin to the negative energy set
\begin{equation*}\label{eqGamma}
\Gamma = \left\{ \gamma \in C([0,1], W^{m,\mathcal{H}}_0(\Omega)) : \gamma(0) = 0, \, J(\gamma(1)) < 0 \right\}.
\end{equation*}
By the mountain pass geometry and the Deformation Lemma, $J$ admits a Palais-Smale sequence $(u_j)_{j\in\mathbb{N}}$ at level $\beta$, see Ambrosetti-Rabinowitz \cite{Ambrosetti-Rabinowitz-1973}, Willem \cite{Willem-1996}. Then the claim below, along with Proposition \ref{propcompactness}, ensures that $(u_j)_{j\in\mathbb{N}}$ contains a subsequence converging weakly to a nontrivial weak solution of problem \eqref{3.2.1}. \\
\textbf{Claim:} For all $\mu > 0$ and sufficiently small $\|b\|_{\infty} \geq 0$, under the cases mentioned in  Theorem \ref{thm3.2.1}, the strict inequality given below holds
\begin{equation}\label{eqbeta_inequality}
\beta < \beta^*(\mu, \|b\|_{\infty}).
\end{equation}
For any nonzero $u \in W^{m,\mathcal{H}}_0(\Omega)$, there exists $t_u > 0$ satisfying $J(t_u u) < 0$, since $\lim_{{t \to \infty}} J(tu) = -\infty$. So the line segment$ \{t_utu:0\leq t\leq 1\}$ belongs to $\Gamma$, yielding the bound
\begin{equation*}\label{eqbeta_bound}
\beta \leq \max_{0 \leq t \leq 1} J(t t_u u) \leq \sup_{t \geq 0} J(tu).
\end{equation*}
Thus, to establish \eqref{eqbeta_inequality}, it suffices to show that for any $u_0 \in W^{m,\mathcal{H}}_0(\Omega) \setminus \{0\}$, 
\begin{equation}\label{eqmax_condition}
\max_{t \geq 0} J(tu_0) < \beta^*(\mu, \|b\|_{\infty}).
\end{equation}
To construct such a function $u_0 $, we fix $x_0 = 0$, $\sigma>0$ and consider a cut-off function $\psi \in C_0^\infty(B_\sigma(0))$ with $0 \leq \psi \leq 1$ and $\psi \equiv 1$ on $B_{\sigma/2}(0)$.
Let
\begin{equation*}
    w_{\epsilon}(x)=\frac{1}{(\epsilon^{\frac{p}{p-1}}+|x|^{\frac{p}{p-1}})^{\frac{N-mp}{p}}}.
\end{equation*}
For any $j=0,1,\cdots,m$, it holds 
\begin{equation*}
|\Delta^jw_\epsilon(x)|\leq C\frac{|x|^{2j}}{(\epsilon^{\frac{p}{p-1}}+|x|^{\frac{p}{p-1}})^{\frac{(N-mp+2jp)}{p}}},
\end{equation*}
\begin{equation*}
|\nabla\Delta^jw_\epsilon(x)|\leq C\frac{|x|^{2j}}{(\epsilon^{\frac{p}{p-1}}+|x|^{\frac{p}{p-1}})^{\frac{(N-mp+2jp+p)}{p}}},
\end{equation*}
where $C>0$ is a constant. For $\epsilon > 0$, we define the following sequence of functions
\begin{align}\label{eqtest_functions}
u_\epsilon(x) &= \frac{\psi(x)}{\left( \epsilon^{\frac{p}{p-1}} + |x|^{\frac{p}{p-1}} \right)^{\frac{N-mp}{p}}} = \psi(x) w_\epsilon, \\
v_\epsilon(x) &= \frac{u_\epsilon(x)}{\|{u_\epsilon}\|_{p^*}}.
\end{align}
Recall that 
$g(\epsilon)=O(f(\epsilon))$
as $\epsilon\to0$ if there exist constants $c_1$, $c_2>0$ such that
$$c_1|f(\epsilon)|\leq|g(\epsilon)|\leq c_2|f(\epsilon)|$$
 for all sufficiently small $\epsilon>0$.
We shall verify \eqref{eqmax_condition} for $u_0 = v_\epsilon$ with sufficiently small $\epsilon > 0$. Following Grunau \cite{Grunau-1995} and Dr{\'a}bek-Huang \cite{Drabek-Huang-1997}, we have the following estimates as $\epsilon \to 0$
\begin{align}
\int_{\Omega} |\nabla^m v_\epsilon|^p  \mathrm{d}x &= S_p + O\left(\epsilon^{\frac{N-mp}{p-1}}\right), \label{eqgrad_est} 
\end{align}
\begin{align}
\int_{\Omega} |\nabla^mv_\epsilon|^q \mathrm{d}x &= 
\begin{cases} 
O \left( \epsilon^{\frac{N(p-q)}{p}} \right) & q > \frac{N(p-1)}{N-m}, \\
O \left( \epsilon^{\frac{N(N-mp)}{(N-m)p}} | \log \epsilon | \right) & q = \frac{N(p-1)}{N-m}, \\
O \left( \epsilon^{\frac{(N-mp)q}{p(p-1)}} \right) & q < \frac{N(p-1)}{N-m},
\end{cases} \label{eqnablam_est}
\end{align}
and
\begin{align}
\int_{\Omega} v_\epsilon^r  \mathrm{d}x &= 
\begin{cases} 
O \left( \epsilon^{\frac{Np-(N-mp)r}{p}} \right) & r > \frac{N(p-1)}{N-mp}, \\
O \left( \epsilon^{\frac{N}{p}} | \log \epsilon | \right) & r = \frac{N(p-1)}{N-mp}, \\
O \left( \epsilon^{\frac{(N-mp)r}{p(p-1)}} \right) & r < \frac{N(p-1)}{N-mp},
\end{cases} \label{eqv_r_est}
\end{align}
\begin{lemma}\label{lemkey_lemma}
{Under the assumptions \eqref{3.1.2} and \eqref{3.2.2}}, if the limit condition
\begin{equation}\label{eqlimit_condition}
\lim_{\epsilon \to 0} \frac{\epsilon^{\frac{N-mp}{p-1}}}{\int_{\Omega} v^r_\epsilon \, \mathrm{d}x} = 0
\end{equation}
holds, then there exist constants $\epsilon_0, b^* > 0$ such that for all $\mu > 0$ and $\|b\|_{\infty} \in [0, b^*)$,
\begin{equation*}\label{eqlemma_conclusion}
\max_{t \geq 0} J(tv_{\epsilon_0}) < \beta^*(\mu, \|b\|_{\infty}).
\end{equation*}
\end{lemma}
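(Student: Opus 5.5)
The plan is to exploit the structure of $J$ on the test functions. Fix $x_0=0$ as in the construction. Since $\operatorname{supp}v_\epsilon\subset B_\sigma(0)$ and $a\equiv0$ there by \eqref{3.2.2}, the $q$-term of $J(tv_\epsilon)$ disappears. Hypothesis \eqref{3.1.2} forces $B_\sigma(0)\cap\operatorname{supp}(b)=\emptyset$, since otherwise $a_0=0$. Hence the $b$-term disappears as well. With $\|v_\epsilon\|_{p^*}=1$ this gives
\[
J(tv_\epsilon)=\frac{t^p}{p}A_\epsilon-\frac{\lambda t^r}{r}B_\epsilon-\frac{\mu t^{p^*}}{p^*},\qquad A_\epsilon=\int_\Omega|\nabla^m v_\epsilon|^p\,\mathrm{d}x,\quad B_\epsilon=\int_\Omega v_\epsilon^r\,\mathrm{d}x,
\]
with $A_\epsilon=S_p+O(\epsilon^{\frac{N-mp}{p-1}})$ by \eqref{eqgrad_est}.

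Next I would rescale $t=\mu^{-1/(p^*-p)}s$ to remove $\mu$ from the leading part. Then
\[
\max_{t\ge0}J(tv_\epsilon)=\mu^{-\frac{N-mp}{mp}}\max_{s\ge0}\Big(\frac{s^p}{p}A_\epsilon-\frac{s^{p^*}}{p^*}-\frac{\lambda}{r}\mu^{\frac{p-r}{p^*-p}}s^rB_\epsilon\Big).
\]
The maximizer $s_\epsilon$ stays in a compact subset of $(0,\infty)$, uniformly in $\epsilon$ and $\mu$, because $A_\epsilon\to S_p$. The maximum is therefore bounded by
\[
\frac mN A_\epsilon^{N/mp}-c\,\lambda\mu^{\frac{p-r}{p^*-p}}B_\epsilon\le \frac mN S_p^{N/mp}+C\epsilon^{\frac{N-mp}{p-1}}-c\,\lambda\mu^{\frac{p-r}{p^*-p}}B_\epsilon .
\]
By \eqref{eqlimit_condition} I can fix $\epsilon_0$ so that the negative term beats the error term, say with a margin $\delta>0$. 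In the case $r=p$ the factor $\mu^{\frac{p-r}{p^*-p}}$ equals $1$, so $\delta$ does not depend on $\mu$.

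On the threshold side, Proposition \ref{propasymptotics}(a) gives
\[
\beta^*(\mu,\|b\|_\infty)\ge \frac mN S_p^{N/mp}\mu^{-\frac{N-mp}{mp}}+o(1)\qquad\text{as }\|b\|_\infty\to0^+.
\]
I would redo that bound in the same rescaled variables: put $X=\mu^{-p/(p^*-p)}\tilde X$ and so on in \eqref{eq3.4.24}. The $o(1)$ then becomes $\mu^{-\frac{N-mp}{mp}}\,o(1)$, where the $o(1)$ depends only on $\|b\|_\infty\,\mu^{-\theta}$ for the appropriate exponent $\theta$. Choosing $b^*$ so that this error is below $\delta/2$ would then close the argument.

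The main obstacle is uniformity in $\mu$. The $\lambda$-gain carries the factor $\mu^{(p-r)/(p^*-p)}$, which degenerates as $\mu\to\infty$ when $r>p$. The threshold error degenerates as $\mu\to0$ through $\|b\|_\infty\mu^{-\theta}$. I would first try to control the threshold error directly from the uniform bound on $Y_j$ in the proof of Proposition \ref{propasymptotics}. I would also try to show that the rescaled remainder is monotone in $\mu$. If that fails for $r>p$ at large $\mu$, the argument only gives the uniform statement on bounded $\mu$-ranges, and the dependence of $\epsilon_0$ on the range would have to be made explicit.
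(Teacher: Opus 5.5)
For each fixed $\mu>0$ your argument is correct and follows the paper's route. You use the same reduction of $J(tv_\epsilon)$ to $\frac{t^p}{p}A_\epsilon-\frac{\lambda t^r}{r}B_\epsilon-\frac{\mu t^{p^*}}{p^*}$. You aim at the same target, $\sup_{t\ge0}J(tv_\epsilon)<\frac{m}{N}S_p^{N/mp}\mu^{-\frac{N-mp}{mp}}$ for small $\epsilon$. You then appeal to Proposition~\ref{propasymptotics}(a) to produce $b^*$, as the paper does.

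Only the key estimate is done differently. The paper argues by contradiction: it takes $\epsilon_j\to0$ and critical points $t_j$ of $t\mapsto J(tv_{\epsilon_j})$ above that level and shows $t_j\to t_0=(S_p/\mu)^{1/(p^*-p)}$. It then gets $t_j\le t_0$ from the mean value theorem and \eqref{eqlimit_condition}, and lets the $\lambda$-term give the contradiction. You instead bound the maximum directly after rescaling. This gives an explicit margin and shows how $\epsilon_0$ and $b^*$ depend on $\mu$, which the contradiction argument hides.

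One inaccuracy: for $r>p$ the maximizer $s_\epsilon$ is \emph{not} confined to a compact set uniformly in $\mu$. The coefficient $\frac{\lambda}{r}\mu^{\frac{p-r}{p^*-p}}B_\epsilon$ blows up as $\mu\to0^+$ and pushes $s_\epsilon$ to $0$. For fixed $\mu$ the claim holds because $B_\epsilon\to0$, and that is all you actually use.

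Do not try to obtain uniformity in $\mu$: with $\epsilon_0,b^*$ independent of $\mu$ the statement is false. The paper's proof does not give it either, since it fixes $\mu$ when it invokes \eqref{eq3.2.18} and defines $t_0$.

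For $r=p$ and $0<\lambda<\lambda_1(p)$, as in Theorem~\ref{thm3.2.1}(a), one has $\max_{t\ge0}J(tv_{\epsilon_0})=\frac{m}{N}(A_{\epsilon_0}-\lambda B_{\epsilon_0})^{N/mp}\mu^{-\frac{N-mp}{mp}}$. Here $A_{\epsilon_0}-\lambda B_{\epsilon_0}\ge(1-\lambda/\lambda_1(p))A_{\epsilon_0}>0$, so this tends to $+\infty$ as $\mu\to0^+$. On the other hand, $(0,Y_0,0,Y_0)$ with $Y_0=(a_0S_q)^{\frac{N}{mq}}\|b\|_\infty^{-\frac{N-mq}{mq}}$ lies in $S(\mu,\|b\|_\infty)$ for every $\mu\ge0$. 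Hence $\beta^*(\mu,\|b\|_\infty)\le\frac{m}{N}Y_0$ stays bounded for any fixed $\|b\|_\infty\in(0,b^*)$.

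For $r>p$ your rescaling shows the failure at the other end. As $\mu\to\infty$ the gain vanishes and the rescaled maximum tends to $\frac{m}{N}A_{\epsilon_0}^{N/mp}$. Meanwhile $(X_0,0,X_0,0)$ with $X_0=S_p^{N/mp}\mu^{-\frac{N-mp}{mp}}$ lies in $S(\mu,\|b\|_\infty)$, so $\mu^{\frac{N-mp}{mp}}\beta^*(\mu,\|b\|_\infty)\le\frac{m}{N}S_p^{N/mp}$. The inequality therefore fails for large $\mu$ as soon as $A_{\epsilon_0}>S_p$.

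So your fallback of bounded $\mu$-ranges does not fit the case $r=p$, where the obstruction sits at $\mu\to0^+$. There your monotonicity idea does give uniformity on $[\mu_1,\infty)$. This follows from $\mu^{\frac{N-mp}{mp}}\beta^*(\mu,\|b\|_\infty)=\beta^*(1,\|b\|_\infty\mu^{-\theta})$ with $\theta=\frac{q(N-mp)}{p(N-mq)}$, together with the fact that $\tilde b\mapsto\beta^*(1,\tilde b)$ is nonincreasing.

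The correct formulation is the fixed-$\mu$ one, with $\epsilon_0=\epsilon_0(\mu)$ and $b^*=b^*(\mu)$. Read that way your proof is complete, and Theorem~\ref{thm3.2.1} must likewise be read with $b^*=b^*(\mu)$.
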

\begin{proof}
{Assumption \eqref{3.1.2} and \eqref{3.2.2} imply $a = b = 0$ in $B_\sigma(0)$ because supp(b)$\subset$ supp(a)}. Since $\operatorname{supp}(v_\epsilon) \subset B_\sigma(0)$ and $\|{v_\epsilon}\|_{p^*} = 1$, the energy functional simplifies to
\begin{equation*}\label{eqphi_epsilon}
J(tv_\epsilon) = \frac{t^p}{p} \int_{\Omega} |\nabla^m v_\epsilon|^p \, \mathrm{d}x - \frac{\lambda t^r}{r} \int_{\Omega} v^r_\epsilon \, \mathrm{d}x - \frac{\mu t^{p^*}}{p^*} = \varphi_\epsilon(t).
\end{equation*}
In view of \eqref{eq3.2.18}, it suffices to show
\begin{equation*}\label{eqsup_inequality}
\sup_{t \geq 0} \varphi_\epsilon(t) < \frac{m}{N} \frac{S_p^{\frac{N}{mp}}}{\mu^{\frac{N-mp}{mp}}},
\end{equation*}
for sufficiently small $\epsilon > 0$.
Suppose to the contrary that there exist sequences $\epsilon_j \to 0$ and $t_j > 0$ satisfying $\varphi_{\epsilon_j}^\prime(t_j)=0$ so that we have
\begin{align}
\varphi_{\epsilon_j}(t_j)     &= \frac{t_j^p}{p} \int_{\Omega} |\nabla^m v_j|^p \, \mathrm{d}x  - \frac{\lambda t_j^r}{r} \int_{\Omega} v_j^r \, \mathrm{d}x - \frac{\mu t_j^{p^*}}{p^*} \geq \frac{m}{N} \frac{S_p^{\frac{N}{mp}}}{\mu^{\frac{N-mp}{mp}}}, \label{eqcontrary1}\\
t_j \varphi_{\epsilon_j}^\prime(t_j) &= t_j^p \int_{\Omega} |\nabla^m v_j|^p \, \mathrm{d}x - \lambda t_j^r \int_{\Omega} v_j^r \, \mathrm{d}x - \mu t_j^{p^*} = 0, \label{eqcontrary2}
\end{align}
where $v_j = v_{\epsilon_j}$.
Relations \eqref{eqgrad_est} and \eqref{eqv_r_est} implies  
\begin{align*}
\int_{\Omega} |{\nabla^m v_j}|^p \, \mathrm{d}x \to S_p, ~~
\int_{\Omega} {v_j^r} \, \mathrm{d}x &\to 0\quad\text{as }j\to \infty. 
\end{align*}
Equation \eqref{eqcontrary1} implies the boundedness of $(t_j)$, which converges to some $t_0 > 0$ say, along a subsequence. Passing the limit in \eqref{eqcontrary2} yields
\begin{equation}\label{eqt0_relation}
S_p t_0^p - \mu t_0^{p^*} = 0,
\end{equation}
so $t_0 = \left(\frac{S_p}{\mu}\right)^{\frac{1}{p^*-p}}$.
Combining \eqref{eqcontrary2}, \eqref{eqt0_relation}, and \eqref{eqgrad_est} gives
\begin{equation*}\label{eqresidual_equation1}
S_p(t_j^p - t_0^p) - \lambda t_j^r \int_{\Omega} {v_j}^r \, \mathrm{d}x - \mu(t_j^{p^*} - t_0^{p^*}) = O(\epsilon_j^{\frac{N-mp}{p-1}}).
\end{equation*}
Applying the mean value theorem to the terms $(t_j^p - t_0^p)$ and $(t_j^{p^*} - t_0^{p^*})$
\begin{equation}\label{eqmvt_result}
\left( pS_p\sigma_j^{p-1} - p^*\mu\tau_j^{p^*-1} \right)(t_j - t_0) = \lambda t_j^r \int_{\Omega} {v_j}^r \, \mathrm{d}x + O(\epsilon_j^{\frac{N-mp}{p-1}}),
\end{equation}
where $\sigma_j, \tau_j$ are intermediate values between $t_0$ and $t_j$.
As $t_j \to t_0$, we have $\sigma_j, \tau_j \to t_0$, leading to
\begin{equation*}
pS_p\sigma_j^{p-1} - p^*\mu\tau_j^{p^*-1} \to pS_p t_0^{p-1} - p^*\mu t_0^{p^*-1} = -(p^* - p)\mu t_0^{p^*-1} < 0,
\end{equation*}
by \eqref{eqt0_relation}. Thus, \eqref{eqmvt_result} and \eqref{eqlimit_condition} imply $t_j \leq t_0$ for sufficiently large $j$.
Dividing \eqref{eqcontrary2} by $p^*$, subtracting from \eqref{eqcontrary1}, and using \eqref{eqgrad_est} and \eqref{eqt0_relation}, we obtain
\begin{equation*}\label{eqfinal_inequality}
{\frac{m}{N} S_p t_j^p - \lambda \left( \frac{1}{r} - \frac{1}{p^*} \right) t_j^r \int_{\Omega} {v_j}^r \, \mathrm{d}x \geq \frac{m}{N} S_p t_0^p + O(\epsilon_j^{\frac{N-mp}{p-1}}).}
\end{equation*}
Given that $t_j \leq t_0$ and applying \eqref{eqlimit_condition}, we derive
\begin{equation*}
\lambda \left( \frac{1}{r} - \frac{1}{p^*} \right) t_0^r \leq 0.
\end{equation*}
This is a contradiction since $\lambda, t_0 > 0$ and $r < p^*$ (which implies $\frac{1}{r} - \frac{1}{p^*} > 0$).
Therefore, our assumption was false, and the lemma holds.
\end{proof}
Now, we can finish the proof of Theorem \ref{thm3.2.1}.
\begin{proof}[\bf{Proof of Theorem \ref{thm3.2.1}}]
To complete the proof of Theorem \ref{thm3.2.1}, we only need to justify that
\[
\lim_{\epsilon\to 0}\frac{\epsilon^{\frac{N-mp}{p-1}}}{\int_{\Omega}v_\epsilon^r\,\mathrm{d}x}=0,
\]
in each parameter regime given in statement of Theorem \ref{thm3.2.1}, thanks to lemma \ref{lemkey_lemma}.  From estimate \eqref{eqv_r_est},
\begin{equation}\label{eqasymptotic_ratio}
\frac{\epsilon^{\frac{N-mp}{p-1}}}{\int_{\Omega}v_\epsilon^r\,\mathrm{d}x}
=
\begin{cases}
O\left(\epsilon^{\tfrac{(N-mp)(p-1)-(Np-2N+mp)p}{p(p-1)}}\right) 
& r>\tfrac{N(p-1)}{N-mp},\\[1ex]
O\left(\epsilon^{\tfrac{N-mp^2}{p(p-1)}}/\lvert\log\epsilon\rvert\right) 
& r=\tfrac{N(p-1)}{N-mp},\\[1ex]
O\left(\epsilon^{\tfrac{(N-mp)(p-r)}{p(p-1)}}\right) 
& r<\tfrac{N(p-1)}{N-mp}.
\end{cases}
\end{equation}

\textbf{Case (a)}: \(\,N\ge mp^2,\,r=p\) with $0 < \lambda < \lambda_1(p)$. 
In this case, \eqref{eqasymptotic_ratio} yields
\[
\frac{\epsilon^{\frac{N-mp}{p-1}}}{\int_{\Omega}v_\epsilon^p\,\mathrm{d}x}
=
\begin{cases}
O\left(\epsilon^{\frac{N-mp^2}{p-1}}\right) & N>mp^2,\\[0.5ex]
O\left(\frac{1}{\log\epsilon}\right) & N=mp^2,
\end{cases}
\]
which vanishes as \(\epsilon\to0\) satisfying \eqref{eqlimit_condition}.

\textbf{Case (b)}: \(\,N\ge mp^2,\,p<r<p^*\). Our assumption implies \(p\ge\tfrac{N(p-1)}{N-mp}\) which immediately gives \(r>\tfrac{N(p-1)}{N-mp}\) and so \eqref{eqasymptotic_ratio} yields
\[
\frac{\epsilon^{\frac{N-mp}{p-1}}}{\int_{\Omega}v_\epsilon^r\,\mathrm{d}x}
=O\left(\epsilon^{\tfrac{(N-mp)(p-1)r-(Np-2N+mp)p}{p(p-1)}}\right).
\]
Noting \((N-mp)(p-1)r-(Np-2N+mp)p\ge (N-mp^2)p\ge0\), the ratio tends to zero satisfying \eqref{eqlimit_condition}.

\textbf{Case (c)}: \(\,N<mp^2,\;\tfrac{(Np-2N+mp)p}{(N-mp)(p-1)}<r<p^*\).  
Here again we have, \(r>\tfrac{N(p-1)}{N-mp}\), so \eqref{eqasymptotic_ratio} yields
\[
\frac{\epsilon^{\frac{N-mp}{p-1}}}{\int_{\Omega}v_\epsilon^r\,\mathrm{d}x}
=O\left(\epsilon^{\tfrac{(N-mp)(p-1)r-(Np-2N+mp)p}{p(p-1)}}\right).
\]
We observe that the exponent ${(N-mp)(p-1)r - (Np-2N+mp)p}>0$ by hypothesis on r, satisfying \eqref{eqlimit_condition}. 

Hence, in every case, the limit vanishes, which completes the proof.
\end{proof}

Next we shall establish the proof of Theorem \ref{thm3.2.4}, for which we consider the variational functional for problem \eqref{3.2.5} as
\begin{equation*}\label{eqdouble_phase_func}
F(u) = \int_{\Omega} \left( \frac{1}{p} |{\nabla^m u}|^p + \frac{a(x)}{q} |{\nabla^m u}|^q - \frac{c(x)}{s} |{u}|^s - \frac{\mu}{p^*} |{u}|^{p^*} - \frac{b(x)}{q^*} |{u}|^{q^*} \right) \mathrm{d}x,
\end{equation*}
for $u \in W^{m,\mathcal{H}}_0(\Omega)$. We fix $p < r < p^*$, $p^* \leq s < q^*$ and define
\begin{equation*}\label{eqorlicz_function}
\mathcal{C}(x,t) = t^r + c(x)t^s, \quad (x,t) \in \Omega \times [0,\infty).
\end{equation*}
For $u \in L^{\mathcal{C}}(\Omega)$ with $\|{u}\|_\mathcal{C} \leq 1$, we have 
\begin{equation*}\label{eqestimate}
\int_{\Omega} c(x) |u|^s \, \mathrm{d}x \leq 
\int_{\Omega} \left(|u|^r + c(x)|u|^s \right) \mathrm{d}x \leq 
\max \left\{ \|{u}\|_{\mathcal{C}}^r, \|{u}\|_{\mathcal{C}}^s \right\}= \|{u}\|_{\mathcal{C}}^r.
\end{equation*}
By Proposition \ref{prop3.3.12}, $W^{m,\mathcal{H}}_0(\Omega) \hookrightarrow L^{\mathcal{C}}(\Omega)$ continuously. Analogous to functional \eqref{jfunctional}, $F$ exhibits mountain pass geometry. To establish existence, we aim to show that for any $\|b\|_{\infty} > 0$ and sufficiently small $\mu \geq 0$,
\begin{equation}\label{eq3.5.25}
\max_{t\geq 0} F(tu_0) < \beta^*(\mu, \|b\|_\infty) 
\end{equation}
for some non-zero $u_0 \in W^{m,\mathcal{H}}_0(\Omega)$, where $\beta^*$ is the critical energy level. 
Fix $x_0 = 0$ and consider a cut-off function $\psi \in C^\infty_0(B_\sigma(0))$ with $0 \leq \psi \leq 1$ and $\psi = 1$ on $B_{\sigma/2}(0)$. For parameters $\epsilon > 0$ and $0 < \delta \leq 1$, we define the functions
\begin{align}\label{eqtest_function}
u_{\epsilon,\delta}(x) &= \frac{\psi(\frac{x}{\delta})}{\left( \epsilon^{\frac{q}{q-1}} + |x|^{\frac{q}{q-1}} \right)^{\frac{N-mq}{q}}}, \\
v_{\epsilon,\delta}(x) &= \frac{u_{\epsilon,\delta}(x)}{\|{u_{\epsilon,\delta}}\|_{q^*}}.
\end{align}
Then $\|v_{\epsilon,\delta}\|_{q^*}=1$ and we will derive the estimate similar to \eqref{eqgrad_est}, \eqref{eqnablam_est}, and \eqref{eqv_r_est} for $v_{\epsilon,\delta}$.
First, we note that 
\begin{equation}\label{urelation}
u_{\epsilon,\delta}(x)=\delta^{-\frac{(N-mq)}{q-1}}u_{\frac{\epsilon}{\delta}}\left(\frac{x}{\delta}\right),
\end{equation}
so 
\begin{align*}
\int_{\Omega}u_{\epsilon,\delta}^{q^*}(x)\mathrm{d}x = \delta^{-\frac{Nq}{q-1}}\int_{\Omega}{u_{\frac{\epsilon}{\delta}}}^{q^*}\left(\frac{x}{\delta}\right)\mathrm{d}x
=\delta^{-\frac{N}{q-1}}\int_{\Omega}{u_{\frac{\epsilon}{\delta}}}^{q^*}(x)\mathrm{d}x.
\end{align*}
and hence, 
\begin{equation}\label{urelation2}\|u_{\epsilon,\delta}\|_{q^*}=\delta^{-\frac{N-mq}{q(q-1)}}\|u_{\frac{\epsilon}{\delta}}\|_{q^*}.
\end{equation}
It follows from \eqref{urelation} and \eqref{urelation2} that
$$v_{\epsilon,\delta}(x)=\delta^{-\frac{(N-mq)}{q}}v_{\frac{\epsilon}{\delta}}\left(\frac{x}{\delta}\right),$$
so
\begin{align}\label{vsrelation}
\int_{\Omega}v_{\epsilon,\delta}^s(x)\mathrm{d}{x}=\delta^{-\frac{(N-mq)s}{q}}\int_{\Omega}v_{\frac{\epsilon}{\delta}}^s\left(\frac{x}{\delta}\right)\mathrm{d}x=\delta^{\frac{Nq-(N-mq)s}{q}}\int_{\Omega}v_{\frac{\epsilon}{\delta}}^s(x)\mathrm{d}x.
\end{align}
Moreover
$$\nabla^mv_{\epsilon,\delta}(x)=\delta^{-\frac{N}{q}}\nabla^mv_{\frac{\epsilon}{\delta}}\left(\frac{x}{\delta}\right),$$
and hence 
\begin{align}\label{gradmrelation}
\int_{\Omega}|\nabla^mv_{\epsilon,\delta}|^p\mathrm{d}x=\delta^{-\frac{Np}{q}}\int_{\Omega}|\nabla^mv_{\frac{\epsilon}{\delta}}\left(\frac{x}{\delta}\right)|^p\mathrm{d}{x}=\delta^{\frac{N(q-p)}{q}}\int_{\Omega}|\nabla^mv_{\frac{\epsilon}{\delta}}\left(\frac{x}{\delta}\right)|^p\mathrm{d}{x}.
\end{align}
We will demonstrate that $u_0 = v_{\epsilon,\delta}$ satisfies \eqref{eq3.5.25} for appropriately chosen $\epsilon, \delta > 0$. Combining \eqref{vsrelation} \eqref{gradmrelation} with \eqref{eqgrad_est}, \eqref{eqnablam_est}, \eqref{eqv_r_est},
the following estimates hold as $\epsilon \to 0$ and $\epsilon / \delta \to 0$, referring  Ho et al. \cite[Lemma 3.2]{Ho-Perera-Sim-2023}
\begin{align}
\int_{\Omega} |{\nabla^m v_{\epsilon,\delta}}|^q \, \mathrm{d}x &= S_q + O\left( \left(\frac{\epsilon}{ \delta}\right)^{\frac{N-mq}{q-1}} \right),\label{eq3.5.28} \\
\int_{\Omega} |{\nabla^m v_{\epsilon,\delta}}|^p \, \mathrm{d}x &= 
\begin{cases}
O\left( \epsilon^{\frac{N(q-p)}{q}} \right), & \text{if } p > \frac{N(q-1)}{N-m}, \\
O\left( \epsilon^{\frac{N(N-mq)}{(N-m)q}} |{\log (\epsilon / \delta)}| \right), & \text{if } p = \frac{N(q-1)}{N-m}, \\
O\left( \epsilon^{\frac{(N-mq)p}{q(q-1)}} \delta^{\frac{(N(q-1)-(N-m)p)}{q-1}} \right), & \text{if } p < \frac{N(q-1)}{N-m},
\end{cases} \label{eq3.5.29}
\end{align}
and
\begin{align}
\int_{\Omega} {v^s_{\epsilon,\delta}} \, \mathrm{d}x &=
\begin{cases}
O\left( \epsilon^{\frac{Nq-(N-mq)s}{q}} \right), & \text{if } s > \frac{N(q-1)}{N-mq}, \\
O\left( \epsilon^{\frac{N}{q}} |{\log (\frac{\epsilon}{ \delta})}| \right), & \text{if } s = \frac{N(q-1)}{N-mq}, \\
O\left( \epsilon^{\frac{(N-mq)s}{q(q-1)}} \delta^{\frac{(N(q-1)-(N-mq)s)}{q-1}} \right), & \text{if } s < \frac{N(q-1)}{N-mq}.
\end{cases} \label{eq3.5.30}
\end{align}
\begin{lemma}\label{lemcritical_energy_bound}
Let $v_{\epsilon,\delta}$ be a family of functions satisfying the normalization condition $\|{v_{\epsilon,\delta}}\|_{q^*} = 1$ and $\epsilon/\delta\to0$. Then under the asymptotic conditions
\begin{align}
\lim_{\epsilon \to 0} \frac{\int_{\Omega} |{\nabla^m v_{\epsilon,\delta}}|^p \, \mathrm{d}x}{\int_{\Omega} {v_{\epsilon,\delta}}^s \, \mathrm{d}x} &= 0, \label{eqlimit_condition_1}\\
\lim_{\epsilon \to 0} \frac{\left(\frac{\epsilon}{\delta}\right)^{\frac{N-mq}{q-1}}}{\int_{\Omega} {v_{\epsilon,\delta}}^s \, \mathrm{d}x} &= 0, \label{eqlimit_condition_2}
\end{align}
there exist positive constants $\epsilon_0$, $\delta_0$, and $\mu^*$ such that for all $0 \leq \mu < \mu^*$ and $\|b\|_\infty > 0$
\begin{equation*}
\max_{t\geq 0} F(tv_{\epsilon_0, \delta_0}) < \beta^\star(\mu, \|b\|_\infty).
\end{equation*}
\end{lemma}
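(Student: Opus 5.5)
We follow the scheme of Lemma~\ref{lemkey_lemma}, with the roles of $p$ and $q$ interchanged and the subcritical term $c(x)|u|^s$ playing the role of $\lambda|u|^r$.

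\textbf{Reduction to a one-variable comparison.} Since $\operatorname{supp}(v_{\epsilon,\delta})\subset B_{\delta\sigma}(0)\subset B_\sigma(0)$, hypothesis \eqref{3.2.6} gives $a\equiv a_0$, $b\equiv\|b\|_\infty$ and $c\ge c_0$ on the support. Using $\|v_{\epsilon,\delta}\|_{q^*}=1$, we get
\[
F(tv_{\epsilon,\delta})\le \frac{t^p}{p}A_\epsilon+\frac{a_0t^q}{q}B_\epsilon-\frac{c_0t^s}{s}D_\epsilon-\frac{\|b\|_\infty t^{q^*}}{q^*}=:\varphi_\epsilon(t),
\]
where $A_\epsilon=\int_\Omega|\nabla^m v_{\epsilon,\delta}|^p\,\mathrm{d}x$, $B_\epsilon=\int_\Omega|\nabla^m v_{\epsilon,\delta}|^q\,\mathrm{d}x$ and $D_\epsilon=\int_\Omega v_{\epsilon,\delta}^s\,\mathrm{d}x$. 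The $\mu$-term is nonpositive, so it has been dropped.

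By Proposition~\ref{propasymptotics}(b), $\beta^*(\mu,\|b\|_\infty)\ge \frac mN (a_0S_q)^{N/mq}\|b\|_\infty^{-(N-mq)/mq}-o(1)$ as $\mu\to0^+$. It therefore suffices to find $\epsilon_0,\delta_0$ with
\[
\sup_{t\ge0}\varphi_{\epsilon_0}(t)<\frac mN\frac{(a_0S_q)^{N/mq}}{\|b\|_\infty^{(N-mq)/mq}}=:L.
\]
Once this strict inequality holds for one fixed pair, the gap $L-\sup_t\varphi_{\epsilon_0}$ is a fixed positive number. Choosing $\mu^*$ so small that the $o(1)$ in Proposition~\ref{propasymptotics}(b) is below this gap then gives the claim for all $0\le\mu<\mu^*$.

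\textbf{Contradiction argument.} Suppose that along $\epsilon_j\to0$ (with $\epsilon_j/\delta_j\to0$) there are maximizers $t_j$ with $\varphi_{\epsilon_j}(t_j)\ge L$ and $t_j\varphi'_{\epsilon_j}(t_j)=0$. By \eqref{eq3.5.28}, $B_{\epsilon_j}\to S_q$. By \eqref{eqlimit_condition_1}, together with $D_\epsilon$ bounded (from Hölder and $s<q^*$), we get $A_{\epsilon_j}\to0$. Hence $t_j$ is bounded and bounded away from $0$, since $\varphi\ge L>0$ forces $t_j\not\to 0$. Along a subsequence $t_j\to t_0$, with $a_0S_qt_0^q=\|b\|_\infty t_0^{q^*}$, and one checks that $\frac{a_0}{q}S_q t_0^q-\frac{\|b\|_\infty}{q^*}t_0^{q^*}=L$.

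Multiply the derivative identity by $1/q^*$ and subtract it from $\varphi_{\epsilon_j}(t_j)\ge L$. This gives
\[
\Big(\frac1p-\frac1{q^*}\Big)t_j^pA_{\epsilon_j}+\frac mN a_0t_j^qB_{\epsilon_j}-c_0\Big(\frac1s-\frac1{q^*}\Big)t_j^sD_{\epsilon_j}\ge \frac mN a_0S_qt_0^q.
\]
We then need $t_j\le t_0+o(1)$ at the correct rate. As in Lemma~\ref{lemkey_lemma}, apply the mean value theorem to the derivative identity. The coefficient of $t_j-t_0$ tends to $qa_0S_qt_0^{q-1}-q^*\|b\|_\infty t_0^{q^*-1}<0$. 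The remaining terms are a positive term of order $D_{\epsilon_j}$, and error terms of order $A_{\epsilon_j}$ and $(\epsilon_j/\delta_j)^{(N-mq)/(q-1)}$, both of which are $o(D_{\epsilon_j})$ by \eqref{eqlimit_condition_1} and \eqref{eqlimit_condition_2}. This yields $t_j-t_0\le o(D_{\epsilon_j})$, hence $t_j^q\le t_0^q+o(D_{\epsilon_j})$.

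Substituting this bound, using $B_{\epsilon_j}=S_q+o(D_{\epsilon_j})$ and $A_{\epsilon_j}=o(D_{\epsilon_j})$, and dividing by $D_{\epsilon_j}$, we obtain in the limit $c_0(\frac1s-\frac1{q^*})t_0^s\le0$. This is impossible because $s<q^*$ and $t_0>0$.

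\textbf{Main obstacle.} The hardest step is the careful bookkeeping of the $A_\epsilon$ term. It does not vanish identically as the $a$-term did in Lemma~\ref{lemkey_lemma}, and it must be shown to be negligible against $D_\epsilon$ both in the MVT step and in the final inequality; this is exactly what \eqref{eqlimit_condition_1} provides. A further point is to verify that $D_{\epsilon_j}$ stays bounded, so that $t_j\to t_0>0$ is legitimate.
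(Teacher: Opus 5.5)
Your proposal follows the paper's proof essentially step for step. It uses the same majorant $\varphi_{\epsilon,\delta}$ obtained from \eqref{3.2.6} and $\|v_{\epsilon,\delta}\|_{q^*}=1$, the same reduction via \eqref{eq3.2.19}, and the same contradiction through critical points $t_j\to t_0$. It then applies the mean value theorem to get $t_j\le t_0$ up to $o(D_{\epsilon_j})$, and subtracts the identity $t_j\varphi'_{\epsilon_j}(t_j)=0$ divided by $q^*$. Your explicit choice of $\mu^*$ from the fixed gap is in fact cleaner than the paper's wording.

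One small fix is needed. To obtain the limiting relation $a_0S_qt_0^q=\|b\|_\infty t_0^{q^*}$ you need $D_{\epsilon_j}\to0$, not merely boundedness; otherwise a term $-c_0t_0^sD_\infty$ survives in the limit. This convergence is supplied by \eqref{eq3.5.30}, since $s<q^*$, exactly as the paper uses it.
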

\begin{proof}
We proceed by contradiction, so suppose the conclusion fails to hold. Then for any choice of $\epsilon_0$, $\delta_0$, and $\mu^*$, there exists some $\mu \in [0, \mu^*)$ such that
\begin{equation*}
\max_{t\geq 0} F(tv_{\epsilon_0, \delta_0}) \geq \beta^\star(\mu, \|b\|_\infty).
\end{equation*}
{Since $\operatorname{supp}(v_{\epsilon,\delta}) \subset B_{\sigma{\delta}}(0)\subset B_\sigma(0)$} because $0 < \delta \leq 1$ and $\|{v_{\epsilon,\delta}}\|_{q^*} = 1$, the energy functional satisfies
\begin{equation*}\label{eqenergy_upper_bound}
F(tv_{\epsilon,\delta}) \leq \frac{t^p}{p} \int_{\Omega} |{\nabla^m v_{\epsilon,\delta}}|^p \, \mathrm{d}x + \frac{a_0 t^q}{q} \int_{\Omega} |{\nabla^m v_{\epsilon,\delta}}|^q \, \mathrm{d}x - \frac{c_0 t^s}{s} \int_{\Omega} {v_{\epsilon,\delta}}^s \, \mathrm{d}x - \frac{\|b\|_\infty t^{q^*}}{q^*} = \varphi_{\epsilon,{\delta}}(t),
\end{equation*}
for all $\mu \geq 0$, by inequality~\eqref{3.2.6}.
So in view of \eqref{eq3.2.19}, it suffices to establish
\begin{equation*}\label{eqsufficient_condition}
\sup_{t \geq 0} \varphi_{\epsilon,{\delta}}(t) < \frac{m}{N} \frac{(a_0 S_q)^{\frac{N}{mq}}}{\|b\|_\infty^{\frac{N-mq}{mq}}},
\end{equation*}
for sufficiently small $\epsilon > 0$.
Suppose, on contrary,  there exist sequences $\epsilon_j \to 0$ and $t_j > 0$ such that
\begin{align}
\varphi_{\epsilon_j,{\delta_j}}(t_j) &= \frac{t_j^p}{p} \int_{\Omega} |\nabla^m v_j|^p \, \mathrm{d}x + \frac{a_0 t_j^q}{q} \int_{\Omega} |\nabla^m v_j|^q \, \mathrm{d}x - \frac{c_0 t_j^s}{s} \int_{\Omega} v_j^s \, \mathrm{d}x - \frac{\|b\|_\infty t_j^{q^*}}{q^*} \geq \frac{m}{N} \frac{(a_0 S_q)^{\frac{N}{mq}}}{\|b\|_\infty^{\frac{N-mq}{mq}}}, \label{eqcontradiction_assumption}\\
t_j \varphi'_{\epsilon_j,{\delta_j}}(t_j) &= t_j^p \int_{\Omega} |\nabla^m v_j|^p \, \mathrm{d}x + a_0 t_j^q \int_{\Omega} |\nabla^m v_j|^q \, \mathrm{d}x - c_0 t_j^s \int_{\Omega} v_j^s \, \mathrm{d}x - \|b\|_\infty t_j^{q^*} = 0, \label{eqcritical_point_condition}
\end{align}
where $v_j = v_{\epsilon_j,\delta_j}$ and $\delta_j = \delta(\epsilon_j)$.
By the established estimates~\eqref{eq3.5.28}, \eqref{eq3.5.29}, and~\eqref{eq3.5.30}, we know that
\begin{align*}
\int_{\Omega} |{\nabla^m v_j}|^q \, \mathrm{d}x \to S_q,~~
\int_{\Omega} |{\nabla^m v_j}|^p \, \mathrm{d}x \to 0,~~\int_{\Omega} {v_j}^s \, \mathrm{d}x \to 0.
\end{align*}
The inequality~\eqref{eqcontradiction_assumption} implies that the sequence $(t_j)$ is bounded so along a subsequence, $t_j \to t_0 > 0$. Passing the limit in~\eqref{eqcritical_point_condition}, this yields
\begin{equation}\label{eqlimiting_relation}
a_0 S_q t_0^q - \|b\|_\infty t_0^{q^*} = 0,
\end{equation}
which gives $t_0 = \left(\frac{a_0 S_q}{\|b\|_\infty}\right)^{\frac{1}{(q^*-q)}}$.
Combining equations~\eqref{eqcritical_point_condition} and~\eqref{eqlimiting_relation} with the asymptotic estimates \eqref{eq3.5.28}, we obtain
\begin{equation*}\label{eqresidual_equation}
t_j^p \int_{\Omega} |{\nabla^m v_j}|^p \, \mathrm{d}x + a_0 S_q (t_j^q - t_0^q) - c_0 t_j^s \int_{\Omega} {v_j}^s \, \mathrm{d}x - \|b\|_\infty (t_j^{q^*} - t_0^{q^*}) = O\left( \left(\frac{\epsilon_j}{\delta_j}\right)^{\frac{N-mq}{q-1}} \right).
\end{equation*}
Applying the mean value theorem to the terms $(t_j^q - t_0^q)$ and $(t_j^{q^*} - t_0^{q^*})$, we write
\begin{equation}\label{eqmean_value_application}
\left( q a_0 S_q \sigma_j^{q-1} - q^* \|b\|_\infty \tau_j^{q^*-1} \right) (t_j - t_0) = c_0 t_j^s \int_{\Omega} {v_j}^s \, \mathrm{d}x - t_j^p \int_{\Omega} |{\nabla^m v_j}|^p \, \mathrm{d}x + O\left( \left(\frac{\epsilon_j}{\delta_j}\right)^{\frac{N-mq}{q-1}} \right),
\end{equation}
where $\sigma_j$ and $\tau_j$ are intermediate values between $t_0$ and $t_j$.
As $t_j \to t_0$, we have $\sigma_j, \tau_j \to t_0$ which leads to
\begin{equation*}
q a_0 S_q \sigma_j^{q-1} - q^* \|b\|_\infty \tau_j^{q^*-1} \to q a_0 S_q t_0^{q-1} - q^* \|b\|_\infty t_0^{q^*-1} = -(q^* - q) \|b\|_\infty t_0^{q^*-1} < 0,
\end{equation*}
where the last equality follows from~\eqref{eqlimiting_relation}.
Since the left-hand side of~\eqref{eqmean_value_application} approaches to a negative limit and the right-hand side approaches zero due to~\eqref{eqlimit_condition_1} and~\eqref{eqlimit_condition_2}, we conclude that $t_j \leq t_0$ for sufficiently large $j$.
Finally, dividing \eqref{eqcritical_point_condition} by $q^*$, subtracting from~\eqref{eqcontradiction_assumption}, and using \eqref{eq3.5.28} and \eqref{eqlimiting_relation}, we obtain
\begin{equation*}
\left( \frac{1}{p} - \frac{1}{q^*} \right) t_j^p \int_{\Omega} |{\nabla^m v_j}|^p \, \mathrm{d}x + \frac{m}{N} a_0 S_q t_j^q - c_0 \left( \frac{1}{s} - \frac{1}{q^*} \right) t_j^s \int_{\Omega} {v_j}^s \, \mathrm{d}x \geq \frac{m}{N} a_0 S_q t_0^q + O\left( \left(\frac{\epsilon_j}{\delta_j}\right)^{\frac{N-mq}{q-1}} \right).
\end{equation*}
Given that $t_j \leq t_0$ for large $j$ and applying the limiting conditions~\eqref{eqlimit_condition_1} and~\eqref{eqlimit_condition_2}, we derive
\begin{equation*}
c_0 \left( \frac{1}{s} - \frac{1}{q^*} \right) t_0^s \leq 0.
\end{equation*}
This is a contradiction, since $c_0 > 0$, $t_0 > 0$, and $s < q^*$.
Therefore, our assumption was false, and the lemma holds.
\end{proof}
\begin{proof}[\bf{Proof of Theorem \ref{thm3.2.4}}]
To complete the proof of Theorem \ref{thm3.2.4} via Lemma \ref{lemcritical_energy_bound}, we find \(\delta = \delta(\epsilon) \in (0, 1]\) such that \(\epsilon / \delta \to 0\) and conditions \eqref{eqlimit_condition_1} and \eqref{eqlimit_condition_2} hold as \(\epsilon \to 0\).

\textbf{Case (a)}: \(1 < p < \frac{N(q-1)}{N-m}\) and \(\frac{N^2(q-1)}{(N-m)(N-mq)} < s < q^*\).
Take \(\delta = \epsilon^\kappa\) with \(\kappa \in [0, 1)\) to be determined. Since
\[
s > \frac{N^2(q-1)}{(N-m)(N-mq)} > \frac{N(q-1)}{N-mq},
\]
equations \eqref{eq3.5.29} and \eqref{eq3.5.30} yield
\begin{align}
\frac{\int_{\Omega} |\nabla^m v_{\epsilon,\delta}|^p \, \mathrm{d}x}{\int_{\Omega} v_{\epsilon,\delta}^s \, \mathrm{d}x} = O\left(\epsilon^{\frac{1}{q}\left[(N-mq)\left(s+\frac{p}{q-1}\right)-Nq\right]+\frac{\kappa}{q-1}[N(q-1)-(N-m)p]}\right) = O\left(\epsilon^{\frac{\kappa - \underline{\kappa}}{q-1}[N(q-1)-(N-m)p]}\right), \label{eqcase1_limit1}
\end{align}
where
\begin{equation*}\label{eqkappa_lower}
\underline{\kappa} = \frac{Nq(q-1)-(N-mq)(q-1)s-(N-mq)p}{[N(q-1)-(N-m)p]q},
\end{equation*}
and
\begin{align}
\frac{\left(\frac{\epsilon}{\delta}\right)^{\frac{N-mq}{q-1}}}{\int_{\Omega} v_{\epsilon,\delta}^s \, \mathrm{d}x} = O\left(\epsilon^{\frac{1}{q(q-1)}\left((N-mq)(q-1)s-(Nq-2N+mq)q\right)-\frac{\kappa}{q-1}(N-mq)}\right) = O\left(\epsilon^{\frac{\overline{\kappa} - \kappa}{q-1}(N-mq)}\right), \label{eqcase1_limit2}
\end{align}
where
\begin{equation*}\label{eqkappa_upper}
\overline{\kappa} = \frac{(N - mq)(q - 1)s - (Nq - 2N + mq)q}{(N - mq)q}.
\end{equation*}
We want to choose \(\kappa \in [0, 1)\) satisfying \(\kappa > \underline{\kappa}\) and \(\kappa < \overline{\kappa}\), which is possible if and only if \(\underline{\kappa} < \overline{\kappa}\), \(\underline{\kappa} < 1\), and \(\overline{\kappa} > 0\). These conditions are respectively equivalent to
\begin{align*}
s &> \frac{N^2(q - 1)}{(N - m)(N - mq)},  \\
s &> \frac{Np}{N - mq},  \\
s &> \frac{N^2(q - 1)}{(N - m)(N - mq)} - \frac{N - mq}{(N - m)(q - 1)}, 
\end{align*}
all of which are satisfied under our assumptions on \(p\) and \(s\).
With such a choice of \(\kappa\), both expressions in \eqref{eqcase1_limit1} and \eqref{eqcase1_limit2} decay to zero as \(\epsilon \to 0\), verifying conditions \eqref{eqlimit_condition_1} and \eqref{eqlimit_condition_2}.

\textbf{Case (b)}: \(\frac{N(q-1)}{N-m} \leq p < q\) and \(\frac{Np}{N-mq} < s < q^*\).
Take \(\delta = 1\). Since
\[
s > \frac{Np}{N-mq} \geq \frac{N^2(q-1)}{(N-m)(N-mq)} > \frac{N(q-1)}{N-mq},
\]
equations \eqref{eq3.5.29} and \eqref{eq3.5.30} yield
\begin{equation*}\label{eqcase2_limit1}
\frac{\int_{\Omega} |\nabla^m v_{\epsilon,\delta}|^p \, \mathrm{d}x}{\int_{\Omega} v_{\epsilon,\delta}^s \, \mathrm{d}x} = 
\begin{cases}
O\left(\epsilon^{\frac{(N-mq)s-Np}{q}}\right), & \text{if } p > \frac{N(q-1)}{N-m}, \\
O\left(\epsilon^{\frac{(N-mq)s-Np}{q}}|\log \epsilon|\right), & \text{if } p = \frac{N(q-1)}{N-m},
\end{cases}
\end{equation*}
and
\begin{equation*}\label{eqcase2_limit2}
\frac{\left(\frac{\epsilon}{\delta}\right)^{\frac{N-mq}{q-1}}}{\int_{\Omega} v_{\epsilon,\delta}^s \, \mathrm{d}x} = O\left(\epsilon^{\frac{1}{q(q-1)}\left((N-mq)(q-1)s - (Nq- 2N+mq)q\right)}\right).
\end{equation*}
Since \(s > \frac{Np}{N-mq}\), the limit in \eqref{eqlimit_condition_1} holds. The limit in \eqref{eqlimit_condition_2} holds because
\begin{equation*}\label{eqcase2_verification}
\frac{Np}{N-mq} \geq \frac{N^2(q-1)}{(N-m)(N-mq)} > \frac{(Nq-2N+mq)q}{(N-mq)(q-1)}.
\end{equation*}
Therefore, both limiting conditions \eqref{eqlimit_condition_1} and \eqref{eqlimit_condition_2} are satisfied in Case (b).

Therefore, we have constructed appropriate parameter relationships \(\delta = \delta(\epsilon)\) such that the asymptotic conditions of Lemma \ref{lemcritical_energy_bound} are satisfied. This completes the proof of Theorem \ref{thm3.2.4}. 
\end{proof}
\begin{section}{Proof of nonexistence results}
In this section, we will establish the Poho\v{z}aev identity and further using this identity we will prove the nonexistence results for the problem \eqref{3.1.1}. Let us recall that we considered
\begin{equation}\label{3.7.1}
\begin{cases}
\mathcal{L}^m_{p,q}(u) = f(x,u) ~&\text{in } \Omega,\\[6pt]
    \nabla^\gamma u\big|_{\partial\Omega}=0
    &\text{for all  }\gamma\le m-1,
\end{cases}
\end{equation}
where $\mathcal{L}^m_{p,q}$ represents a polyharmonic double phase operator defined by
\begin{equation}
\mathcal{L}^m_{p,q}(u) = 
\begin{cases}
-\nabla.(\Delta^{\frac{m-1}{2}}\{ |\nabla\Delta^{\frac{m-1}{2}}u|^{p-2} \nabla\Delta^{\frac{m-1}{2}} u + a(x) |\nabla\Delta^{\frac{m-1}{2}}u|^{q-2} \nabla\Delta^{\frac{m-1}{2}} u \}) & \text{if } m \text{ is odd}, \\[2ex]
\Delta^{\frac{m}{2}}\left( |\Delta^{\frac{m}{2}} u|^{p-2} \Delta^{\frac{m}{2}} u + a(x) |\Delta^{\frac{m}{2}} u|^{q-2} \Delta^{\frac{m}{2}} u \right) & \text{if } m \text{ is even}.
\end{cases}
\end{equation}
 Here, $m \in \mathbb{N}$, ${0 \leq a(\cdot) \in C^{1}(\Omega)}$ and $\Omega \subset \mathbb{R}^N$ with $N \geq 2$ is a bounded domain, and
$f(x,u) = c(x) |u|^{r-2}u + \mu |u|^{p^*-2}u + b(x)|u|^{q^*-2}u.$
\begin{proof}[\bf{Proof of Theorem \ref{thm3.3.1}}]
First we multiply $(x.\nabla u)$  to both sides of {\eqref{3.7.1}} and integrate to get
\begin{equation}
    (-1)^m\int_{\Omega}\nabla^m(|\nabla^m u|^{p-2}\nabla ^mu+a(x)|\nabla^m u|^{q-2}\nabla ^mu)(x.\nabla u)\,\mathrm{d}x=\int_{\Omega}f(x,u)(x.\nabla u)\,\mathrm{d}x.
\end{equation}
Now, we apply the integration by parts formula, and straightforward calculations, we get
\begin{equation}
\begin{aligned}\label{3.7.4}
    &\int_{\Omega}\left(\left(m-\frac{N}{p}\right)|\nabla^m u|^p+\left(m-\frac{N}{q}\right)a(x)|\nabla^m u|^q\right)\,\mathrm{d}x-\int_{\Omega}\frac{|\nabla^mu|^q}{q}(\nabla a.x)\,\mathrm{d}x\\
    & \quad -\int_{\partial\Omega}\left(\left(1-\frac{1}{p}\right)|\nabla^mu|^p+\left(1-\frac{1}{q}\right)a(x)|\nabla^mu|^q\right)(x.\nu)\,\mathrm{d}\sigma=-N\int_{\Omega}F(x,u)\,\mathrm{d}x.
\end{aligned}
\end{equation}
Hence, using that $u$ solves \eqref{3.7.1} weakly and using \eqref{3.7.4}, we obtain
\begin{equation}
\begin{aligned}\label{3.7.5}
  &  \int_{\Omega}\left(\frac{1}{p^*}|\nabla^m u|^p+\frac{1}{q^*}a(x)|\nabla^m u|^q\right)\,\mathrm{d}x+\frac{1}{Nq}\int_{\Omega}{|\nabla^mu|^q}(\nabla a.x)\,\mathrm{d}x\\
   & \quad  +\frac{1}{N}\int_{\partial\Omega}\left(\left(1-\frac{1}{p}\right)|\nabla^mu|^p+\left(1-\frac{1}{q}\right)a(x)|\nabla^mu|^q\right)(x.\nu)\,\mathrm{d}\sigma=\int_{\Omega}F(x,u)\,\mathrm{d}x.
\end{aligned}
\end{equation}
Now multiplying the equation \eqref{3.7.1} by $u/q^*$, integrating over $\Omega$ and using integrating by parts, we get
\begin{equation}\label{3.7.6}
    \frac{1}{q^*}\int_{\Omega}(|\nabla^mu|^p+a(x)|\nabla ^mu|^q)\,\mathrm{d}x=\frac{1}{q^*}\int_{\Omega}f(x,u)u\,\mathrm{d}x.
\end{equation}
Finally subtracting \eqref{3.7.6} from \eqref{3.7.5} and using the fact that $\frac{1}{p^*}-\frac{1}{q^*}=\frac{1}{p}-\frac{1}{q}$, we get 
\begin{equation}
\begin{aligned}\label{3.7.7}
& \left( \frac{1}{p} - \frac{1}{q} \right) 
\int_{\Omega} |\nabla^m u|^p \, \mathrm{d}x 
+ \frac{1}{Nq} \int_{\Omega} |\nabla^m u|^q (\nabla a \cdot x) \, \mathrm{d}x \\
& \quad + \frac{1}{N} \int_{\partial \Omega} \Bigl[
\Bigl( 1 - \frac{1}{p} \Bigr) |{\nabla ^mu}|^p
+ \Bigl( 1 - \frac{1}{q} \Bigr) a(x) |{\nabla^mu}|^q
\Bigr] (x \cdot \nu) \, d\sigma
= \int_{\Omega} \Bigl[ F(x,u) - \frac{1}{q^*} f(x,u)u \Bigr]\, \mathrm{d}x,
\end{aligned}
\end{equation}
which finishes the proof.
\end{proof}
\subsection{Proofs of Theorem \ref{thm3.3.2} and \ref{thm3.3.3}}
In this subsection, we consider the case where $\Omega$ is a star-shaped domain. Thus, without loss of generality, we assume that $0 \in \Omega$ and $\Omega$ is star-shaped with respect to the origin, which implies that $(x \cdot \nu) \geq 0$ on $\partial \Omega$. Since $a(x)$ is nonnegative, the boundary contribution in \eqref{3.7.7} is also nonnegative. Consequently, the Pohožaev-type identity \eqref{3.7.7} yields the inequality
\begin{equation}
\begin{aligned}\label{3.7.8}
 \left( \frac{1}{p} - \frac{1}{q} \right) 
\int_{\Omega} |\nabla^m u|^p \, \mathrm{d}x 
+ \frac{1}{Nq} \int_{\Omega} |\nabla^m u|^q (\nabla a \cdot x) \, \mathrm{d}x 
\leq \int_{\Omega} \Bigl[ F(x,u) - \frac{1}{q^*} f(x,u)u \Bigr] \,\mathrm{d}x.
\end{aligned}
\end{equation}
We are now in a position to prove the nonexistence theorem.
\begin{proof}[\bf{Proof of Theorem \ref{thm3.3.2}}]
Since $a(x)$ is radial and radially nondecreasing, it follows that $(\nabla a \cdot x) \geq 0$. Hence, by \eqref{3.7.8}, we obtain
\begin{equation}\label{3.7.9}
 \left( \frac{1}{p} - \frac{1}{q} \right) 
\int_{\Omega} |\nabla^m u|^p \, \mathrm{d}x
\leq \int_{\Omega} \Bigl[ c(x)\left(\frac{1}{r}-\frac{1}{q^*}\right)|u|^r+\mu\left(\frac{1}{p^*}-\frac{1}{q^*}\right)|u|^{p^*}\Bigr]\,\mathrm{d}x.
\end{equation}
From case (a), we deduce that
$$\left( \frac{1}{p} - \frac{1}{q} \right) 
\int_{\Omega} |\nabla^m u|^p \, \mathrm{d}x
\leq 0,$$
which yields $u \equiv 0$. By using case (b) and inequality \eqref{3.7.9}, we infer that
$$\left( \frac{1}{p} - \frac{1}{q} \right) 
\int_{\Omega} |\nabla^m u|^p \, \mathrm{d}x
\leq c_{\infty}\left(\frac{1}{p}-\frac{1}{q^*}\right)\int_{\Omega}|u|^p\,\mathrm{d}x.  $$
Hence, by the variational characterization of the first eigenvalue $\lambda_{1}(p)$ of the m-th order polyharmonic operator, we obtain
$$\left(\left( \frac{1}{p} - \frac{1}{q} \right)-\frac{ c_{\infty}}{\lambda_1(p)}\left(\frac{1}{p}-\frac{1}{q^*}\right)\right) 
\int_{\Omega} |\nabla^m u|^p \, \mathrm{d}x\leq0, $$
which, under the assumption on $c_{\infty}$, once again yields $u \equiv 0$.
\end{proof}

\begin{proof}[\bf{Proof of Theorem {\ref{thm3.3.3}}}]
For the sake of contradiction, assume that there exists a sequence $(u_j)_{j\in\mathbb{N}} \subset W_0^{m,\mathcal{H}}(\Omega) \cap W^{m+1,\mathcal{H}}(\Omega)$ of solutions to \eqref{3.7.1} with $\|u_j\| \to 0$ as $j \to \infty$. Let $\gamma > q^*$. Multiplying equation \eqref{3.7.1} by $u/\gamma$, we obtain, for $f$ as in the hypothesis,
$$\frac{1}{\gamma}\int_{\Omega}(\nabla^mu_j|^p+a(x)|\nabla^mu_j|^q)\,\mathrm{d}x=\frac{1}{\gamma}\int_{\Omega}(c(x)|u_j|^r+\mu|u_j|^{p^*}+b(x)|u_j|^{q^*})\,\mathrm{d}x.$$
Subtracting this identity from \eqref{3.7.5}, and using that $\Omega$ is star-shaped while $a$ is radial and radially nondecreasing, we deduce that
\begin{equation*}
    \begin{aligned}
    &\left(\frac{1}{p^*}-\frac{1}{\gamma}\right)\int_{\Omega}|\nabla^mu_j|^p\,\mathrm{d}x +\left(\frac{1}{q^*}-\frac{1}{\gamma}\right)\int_{\Omega}a(x)|\nabla^mu_j|^q\,\mathrm{d}x\\
    &\quad\leq \left(\frac{1}{r}-\frac{1}{\gamma}\right)\int_{\Omega}c(x)|u_j|^r\,\mathrm{d}x+\mu\left(\frac{1}{p^*}-\frac{1}{\gamma}\right)\int_{\Omega}|u_j|^{p^*}\,\mathrm{d}x+\left(\frac{1}{q^*}-\frac{1}{\gamma}\right)\int_{\Omega}b(x)|u_j|^{q^*}\,\mathrm{d}x\\
    &\quad\leq \left(\frac{1}{r}-\frac{1}{\gamma}\right)\int_{\Omega}c(x)|u_j|^r\,\mathrm{d}x+\mu\left(\frac{1}{p^*}-\frac{1}{\gamma}\right)\frac{1}{S_p^{\frac{p^*}{p}}}(\rho_{\mathcal{H}}(\nabla^mu_j))^{\frac{p^*}{p}}+\kappa\left(\frac{1}{q^*}-\frac{1}{\gamma}\right)(\rho_{\mathcal{H}}(\nabla^mu_j))^{\frac{q^*}{q}},
    \end{aligned}
\end{equation*}
where, in the final steps, we applied \eqref{eq3.3.6} and \eqref{eq3.3.7}. Hence, the resulting chain of inequalities yields
\begin{equation}\label{3.7.12}
    \begin{aligned}
        \left(\frac{1}{q^*}-\frac{1}{\gamma}\right)\rho_{\mathcal{H}}(\nabla^mu_j)&\leq\left(\frac{1}{r}-\frac{1}{\gamma}\right)\int_{\Omega}c(x)|u_j|^r\,\mathrm{d}x+\mu\left(\frac{1}{p^*}-\frac{1}{\gamma}\right)\frac{1}{S_p^{\frac{p^*}{p}}}(\rho_{\mathcal{H}}(\nabla^mu_j))^{\frac{p^*}{p}}\\
        &\quad+\kappa\left(\frac{1}{q^*}-\frac{1}{\gamma}\right)(\rho_{\mathcal{H}}(\nabla^mu_j))^{\frac{q^*}{q}},
    \end{aligned}
\end{equation}
Now, if $r \in (p, p^*]$, the Sobolev embedding $W_0^{m,p}(\Omega) \hookrightarrow L^r(\Omega)$ yields
\begin{equation*}
    \begin{aligned}
        \left(\frac{1}{q^*}-\frac{1}{\gamma}\right)\rho_{\mathcal{H}}(\nabla^mu_j)\leq&\left(\frac{1}{r}-\frac{1}{\gamma}\right)c_\infty C_s\left(\int_{\Omega}|\nabla^mu_j|^p\,\mathrm{d}x\right)^{r/p}+\mu\left(\frac{1}{p^*}-\frac{1}{\gamma}\right)\frac{1}{S_p^{\frac{p^*}{p}}}(\rho_{\mathcal{H}}(\nabla^mu_j))^{\frac{p^*}{p}}\\
        &\quad
        +\kappa\left(\frac{1}{q^*}-\frac{1}{\gamma}\right)(\rho_{\mathcal{H}}(\nabla^mu_j))^{\frac{q^*}{q}}\\
        &\leq \left(\frac{1}{r}-\frac{1}{\gamma}\right)c_\infty C_s\left(\rho_{\mathcal{H}}(\nabla^mu_j)\right)^{r/p}+\mu\left(\frac{1}{p^*}-\frac{1}{\gamma}\right)\frac{1}{S_p^{\frac{p^*}{p}}}(\rho_{\mathcal{H}}(\nabla^mu_j))^{\frac{p^*}{p}}\\
        &\quad+\kappa\left(\frac{1}{q^*}-\frac{1}{\gamma}\right)(\rho_{\mathcal{H}}(\nabla^mu_j))^{\frac{q^*}{q}}.
    \end{aligned}
\end{equation*}
Since, by \eqref{eq3.3.4}, we also have $\rho_{\mathcal{H}}(\nabla^{m}u_j)\to 0$, this yields a contradiction, because all the exponents of $\rho_{\mathcal{H}}(\nabla^{m}u_j)$ on the right-hand side are strictly larger than $1$. Hence, case (a) is proved.

For case (b), set $\mathcal{C}(x,t):= t^{\sigma}+c(x)\,t^{r}$ with $\sigma\in(q,p^*)$. By Proposition \ref{prop3.3.12}, we have the embedding $W^{m,\mathcal{H}}(\Omega)\hookrightarrow L^{\mathcal{C}}(\Omega)$. Thus,
$$\int_{\Omega}c(x)|u_j|^r\,\mathrm{d}x\leq\rho_{\mathcal{C}}(u_j)\leq\max\{\|u_j\|_{\mathcal{C}}^\sigma,\|u_j\|_{\mathcal{C}}^r\}\leq C_s\max\{\|u_j\|^\sigma,\|u_j\|^r\}=C_s\|u_j\|^{\sigma},$$
where we have used that $\|u_j\|<1$ for $j$ large enough. Therefore, combining with \eqref{3.7.12} and \eqref{eq3.3.4}, we have  
\begin{equation*}
    \begin{aligned}
        \left(\frac{1}{q^*}-\frac{1}{\gamma}\right)\|u_j\|^q\leq&\left(\frac{1}{q^*}-\frac{1}{\gamma}\right)\rho_{\mathcal{H}}(\nabla^mu_j)\leq\left(\frac{1}{r}-\frac{1}{\gamma}\right)C_s\|u_j\|^\sigma\\
        &\quad +\mu\left(\frac{1}{p^*}-\frac{1}{\gamma}\right)\frac{1}{S_p^{\frac{p^*}{p}}}\|u_j\|^{p^*}+\kappa\left(\frac{1}{q^*}-\frac{1}{\gamma}\right)\|u_j\|^{\frac{pq^*}{q}},
    \end{aligned}
\end{equation*}
which contradicts the fact that $\|u_j\| \to 0$, since all the exponents of $\|u_j\|$ on the right-hand side exceed $q$ in particular, $\frac{pq^*}{q} > q$, which follows from the condition $q/p \leq N/(N-1)$. This completes the proof of case (b).
Finally, to establish case (c), we first estimate
\begin{equation*}
\begin{aligned}
\int_{\Omega} c(x) |u|^r\, \mathrm{d}x &\leq c_\infty
\int_{\operatorname{supp}(c)} |u|^r\, \mathrm{d}x \leq c_\infty C_s
\left(
\int_{\operatorname{supp}(c)} |\nabla^m u|^q \,\mathrm{d}x
\right)^{r/q}\\
&\leq \frac{c_\infty C_s}{
(a'_0)^{r/q}}
\left(
\int_{\operatorname{supp}(c)} a(x) |\nabla^m u|^q \,\mathrm{d}x
\right)^{r/q}
\leq \frac{c_\infty C_s}{
(a'_0)^{r/q}} (\rho_{\mathcal{H}}(\nabla^m u))^{r/q},
\end{aligned}
\end{equation*}
thanks to the embedding $W^{m,q}(\text{supp}(c)) \hookrightarrow L^r(\text{supp}(c))$. Then the conclusion follows exactly as in case (a).
\end{proof}
 \end{section}
\section*{Acknowledgements}
Ashutosh Dixit sincerely thanks the Government of India for the DST INSPIRE Fellowship (Ref. No.  DST/INSPIRE Fellowship/2022/IF220580) and gratefully acknowledges their support.

\section*{Data Availability}
 No data were generated or analyzed in this study.
\section*{Competing Interest statement}
No potential conflict of interest was reported by the authors.

\end{document}